\newcommand{\R}{{\Bbb R}}
\newcommand{\N}{{\Bbb N}}
\newcommand{\C}{{\Bbb C}}
\newcommand{\D}{{\Bbb D}}
\newcommand{\bU}{{\bf U}}
\newcommand{\bV}{{\bf V}}
\newcommand{\re}{\text{\upshape Re\,}}
\newcommand{\im}{\text{\upshape Im\,}}
\newcommand{\Ai}{\text{\upshape Ai}}
\newcommand\be{\begin{equation}}
\newcommand\ee{\end{equation}}
\newcommand{\bea}{\begin{eqnarray}}
\newcommand{\eea}{\end{eqnarray}}
\newcommand\berr{\begin{eqnarray*}}
\newcommand\eerr{\end{eqnarray*}}
\newcommand{\AS}{\textrm{AS}}
\newcommand{\PC}{\text{\upshape PC}}
\newcommand{\Boh}{\mathcal{O}}
\newcommand{\ud}{\,\mathrm{d}}
\def\XXint#1#2#3{{\setbox0=\hbox{$#1{#2#3}{\int}$}
\vcenter{\hbox{$#2#3$}}\kern-.5\wd0}}
\newtheorem{theorem}{Theorem}[section]
\newtheorem{proposition}{Proposition}[section]
\newtheorem{lemma}[proposition]{Lemma}
\newtheorem{remark}[proposition]{Remark}
\newtheorem{rhp}[proposition]{RH problem}
\numberwithin{equation}{section}
\date{\today}
\title[Higher order Airy and Painlev\'e asymptotics  for the mKdV hierarchy]
{Higher order Airy and Painlev\'e asymptotics \\ for the mKdV Hierarchy}
\author{Lin Huang$^{\dagger}$ and Lun Zhang$^{\ddagger}$}
\address{$^{\dagger}$School of Science, Hangzhou Dianzi University,\\ Zhejiang 310018, China. \\ $^{\ddagger}$School of Mathematical Sciences and Shanghai Key Laboratory for Contemporary Applied Mathematics,
 Fudan University, \\Shanghai 200433, China.}
\email{lin.huang@hdu.edu.cn, lunzhang@fudan.edu.cn}
\begin{document}

\begin{abstract}
\noindent
In this paper, we consider Cauchy problem for the modified Korteweg-de Vries hierarchy on the real line with decaying initial data. Using the Riemann--Hilbert formulation and nonlinear steepest descent method, we derive a uniform asymptotic expansion to all orders in powers of $t^{-1/(2n+1)}$ with smooth coefficients of the variable $(-1)^{n+1}x((2n+1) t)^{-1/(2n+1)}$ in the self-similarity region for the solution of $n$-th member of the hierarchy. It turns out that the leading asymptotics is described by a family of special solutions of the Painlev\'e II hierarchy, which generalize the classical Ablowitz-Segur solution for the Painelv\'{e} II equation and appear in a variety of random matrix and statistical physics models. We establish the connection formulas for this family of solutions. In the special case that the reflection coefficient vanishes at the origin, the solutions of Painlev\'e II hierarchy in the leading coefficient vanishes as well, the leading and subleading terms in the asymptotic expansion are instead given explicitly in terms of derivatives of the generalized Airy function.
\end{abstract}

\maketitle

\noindent
{\small{\sc AMS Subject Classification (2010)}: 37K15, 41A60, 35Q15, 35Q53.}

\noindent
{\small{\sc Keywords}: Long-time asymptotics, modified Korteweg-de Vries hierarchy, Painlev\'e II transcendents, Riemann--Hilbert problems, nonlinear steepest descent method.}

\setcounter{tocdepth}{1}
\tableofcontents

\section{Introduction and main results}
In a seminal work \cite{DZ1993}, Deift and Zhou introduced the celebrated nonlinear steepest descent method to analyze asymptotics of oscillatory Riemann--Hilbert (RH) problems. This approach deals with the RH problem directly and consists of various contour deformations, following the spirit of classical steepest descent method. Compared with the inverse scattering method \cite{Zakharov}, it does not require any priori ansatz for the asymptotic form of the solution. Since the solutions for a variety of integrable nonlinear differential equations are closely related to RH problems, the nonlinear steepest descent method and its variants has been applied successfully to resolve many asymptotic problems arising from integrable systems.

The illustrative example used in \cite{DZ1993} is the modified Korteweg-de Vries (mKdV) equation
\begin{equation}\label{eq:mkdv}
u_{t}-6u^2u_x+u_{xxx}=0,\quad x\in\mathbb{R},\quad t>0,
\end{equation}
with Schwartz space initial data. Long-time asymptotics of the solution, which depends explicitly on the reflection coefficient associated with the initial data, is presented in six regions of the $(x,t)$-plane. In the similarity region $-M_1\leq x/t \leq -M_2$, $M_{1,2}>0$, the mKdV equation can be solved to any fixed order $\Boh(1/t^n)$, $n\in\mathbb{N}=\{1,2,\ldots\}$, and the leading asymptotics is described by a slowly decaying modulated sine wave. The full asymptotic expansion in this region is later derived in \cite{DZ1994}, and each higher order coefficient therein can be calculated recursively. In the self-similarity region $|x|\leq Mt^{1/3}$, $M>0$, leading asymptoics of the solution, however, is given in terms of the Ablowitz-Segur solution \cite{AS-1976,SA-1981} for the homogeneous Painelv\'{e} II equation
\begin{equation}\label{eq:PII}
q''(x)-2q(x)^3=xq(x),
\end{equation}
which is determined by the reflection coefficient. This result has recently been improved in a paper of Charlier and Lenells \cite{CL-2019} by showing that the solution actually admits a uniform expansion to all orders in powers of $t^{-1/3}$ with smooth coefficients. Moreover, if the reflection coefficient vanishes at the origin, they derived the leading and subleading terms in the expansion explicitly with the aid of the classical Airy function. Besides these studies of mKdV equation on the real line, we also refer to \cite{BFS2004,L2016} for the asymptotic results on the half-line.

In this paper, we are concerned with the mKdV hierarchy \cite{CJM-2006} which is defined by
\begin{align}\label{mkdvhierarchy}
u_t+\frac{\partial }{\partial x}\bigg(\frac{\partial }{\partial x} +2 u\bigg)\mathcal{L}_n[u_x-u^2]=0,\quad n\in\mathbb{N},
\end{align}
where the operator $\mathcal{L}_n$  satisfies the Lenard recursion relation \cite{Lax}:
\begin{align}\label{LOdef}
\begin{cases}
 \frac{d}{dx}\mathcal{L}_{j+1}f=\bigg(\frac{d^3}{dx^3}+4f\frac{d}{dx}+2f_x \bigg)\mathcal{L}_{j}f,\\
  \mathcal{L}_{0}f=\frac{1}{2},\quad \mathcal{L}_{j}0=0, \quad j=1,2,\cdots.
 \end{cases}
\end{align}
If $n=1$ in \eqref{mkdvhierarchy}, one recovers \eqref{eq:mkdv}, and the equation for $n=2$ reads
\begin{align*}
u_{t}-10u^2u_{xxx}-40 u u_x u_{xx}-10u_x^3+30u^4 u_x+u_{xxxxx}=0.
\end{align*}
Following the spirit in \cite{CL-2019}, emphasis will be put on the higher order Painlev\'{e}-type asymptotics for the mKdV hierarchy in the self-similarity region with initial data $u(x,0)= u_0(x) \in \mathcal{S}(\R)$, where $\mathcal{S}(\R)$ is the Schwartz class of smooth rapidly decaying functions. As we will show later, the role played by the Ablowitz-Segur solution and Airy function in the mKdV equation will be replaced by their higher-order generalizations. In the literatures, we note that the Painlev\'{e} transcendents and their higher-order analogues are crucial in asymptotic analysis of many integrable nonlinear differential equations, as can be seen from their appearances in the focusing nonlinear Schr\"{o}dinger equation \cite{BT13,BLM}, in critical asymptotics for Hamiltonian perturbations of hyperbolic and elliptic systems \cite{Claeys}, in the Camassa-Holm equation \cite{dIS-2010}, in the Sasa-Satsuma equation \cite{hl-2019}, in an extended mKdV equation \cite{LG21b,LGWW-2019} and in the sine-Gordon equation \cite{LM}. The higher order asymptotics in similarity region for other integrable equations can be found in \cite{HXF-2015,V-2000}.

Main results of this paper are stated in what follows.

\subsection{Main Results}
To state our results, we start with the Painlev\'{e} II hierarchy, which is a sequence of ordinary differential equations obtained from equations of the mKdV hierarchy via self-similar reduction \cite{FN80}; see also \cite{CJP,Kud02,Maz07}. The $n$-th member of the Painlev\'{e} II hierarchy is a non-linear ordinary differential equation of order $2n$ for $q=q(x)$, and reads as
\begin{equation}\label{def:PIIhierar}
\left(\frac{d}{dx}+2q\right)\mathcal{L}_n[q_x-q^2]=xq, \qquad n \in\mathbb{N},
\end{equation}
where the Lenard operators are defined in \eqref{LOdef}. The Painlev\'{e} II equation \eqref{eq:PII} corresponds to $n=1$ in \eqref{def:PIIhierar}, while for $n=2$, we have
\begin{equation*}
q''''(x)-10q(x)(q'(x))^2-10q(x)^2q''(x)+6q(x)^5=xq(x).
\end{equation*}
In the most general case, the Painlev\'{e} II hierarchy depends on several parameters, and the one in \eqref{def:PIIhierar} corresponds to the case that all the parameters are taken to be zero.

As is well-known, the solutions to Painlev\'{e} equations and hierarchy are transcendental in general, hence, we cannot expect simple closed forms for the solutions. By assuming that $q((-1)^{n+1}x)$ tends to zero exponentially fast as $x\to+\infty$, it is readily seen that the $n$-th member of the Painlev\'{e} II hierarchy \eqref{def:PIIhierar} is approximated by the generalized Airy equation
\begin{equation}\label{def:kAiry}
\frac{d^{2n}}{dx^{2n}}y(x)=xy(x).
\end{equation}
If $n=1$, the above equation is nothing but the classical Airy equation. It is straightforward to check that the function $\Ai_{2n+1}((-1)^{n+1}x)$ with
\begin{equation}\label{def:kAiry2}
\Ai_{2n+1}(x):=\frac{(-1)^{n+1}}{2 \pi i}\int_{\gamma}e^{(-1)^n\frac{s^{2n+1}}{2n+1}+x s}ds
\end{equation}
solves \eqref{def:kAiry}, where $\gamma$ is a curve in the left half of the complex plane that is asymptotic to straight lines with arguments $\pm \frac{n+1}{2n+1}\pi$ at infinity with the orientation from the bottom to the top. Note that $\Ai_3(x)=\frac{1}{2 \pi i}\int_{\gamma}e^{-\frac{s^{3}}{3}+x s}ds$ is the standard Airy function $\Ai(x)$ \cite{DLMF}.

Let $-1<\rho<1$ be a real number, it has recently been shown in \cite{CCG2019} that each of the $n$-th member of the Painlev\'{e} II hierarchy \eqref{def:PIIhierar} admits a one-parameter family of real solutions\footnote{In \cite{CCG2019}, the authors only considered the case that $0<\rho<1$, but it is clear that the arguments therein can be extended to $-1<\rho<1$.} denoted by $q_{\AS,n}(x;\rho)$ that are pole-free on the real line with the asymptotics
\begin{equation*}
q_{\AS,n}((-1)^{n+1}x;\rho)\sim \rho \Ai_{2n+1}(x), \qquad x\to +\infty,
\end{equation*}
where $\Ai_{2n+1}(x)$ is defined in \eqref{def:kAiry2}. This family of solutions plays an important role in multicritical edge statistics for the momenta of fermions in nonharmonic traps \cite{DMS18} and other statistical physics model \cite{ACV12}. They are natural generalizations of the classical real Ablowitz-Segur solutions for the Painelv\'{e} II equation, and are determined by the Stokes multipliers
\begin{equation}\label{eq:SMs}
s_1=-s_{2n+1}=\rho, \qquad s_2=\cdots=s_{2n}=0.
\end{equation}
Since $q_{\AS,n}$ will also appear in long-time asymptotics of the mKdV hierarchy, our first result
is the following asymptotics of $q_{\AS,n}((-1)^{n+1}x;\rho)$ as $x\to -\infty$. This particularly establishes the so-called connection formulas for this special family of solutions.
\begin{theorem}\label{prop:connection}
For each of the $n$-th member of the Painlev\'{e} II hierarchy \eqref{def:PIIhierar}, there exists a one-parameter family of real solutions $q_{\AS,n}(x;\rho)$ such that as $x \to +\infty$,
\begin{equation}\label{eq:asypinfty}
q_{\AS,n}((-1)^{n+1}x;\rho) = \rho \Ai_{2n+1}(x)(1+o(1)), \qquad -1<\rho<1,
\end{equation}
where $\Ai_{2n+1}(x)$ is defined in \eqref{def:kAiry2}, and as $x \to -\infty$,
\begin{multline}\label{eq:asypminusinfty}
q_{\AS,n}((-1)^{n+1}x;\rho) = \frac{\ud }{\sqrt{n}(-x)^{\frac{2n-1}{4n}}}\cos\left(\frac {2n}{2n+1}(-x)^{\frac{2n+1}{2n}}-\frac{2n+1}{4n}\ud^2\ln(-x)+\varphi\right)
\\+\Boh ((-x)^{-1}),
\end{multline}
where the constants $\ud$ and $\varphi$ are related to the parameter $\rho$ through the connection formulas
\begin{align}
\ud & =\sqrt{-\frac{\ln(1-\rho^2)}{\pi}},
\\
\varphi & =-\frac{\ud^2}{2}\ln(8n)+\arg \Gamma\left(\frac{\ud^2}{2}i\right)+\frac{\pi}{2} \mathrm{sgn}(\rho)-\frac{\pi}{4},
\label{eq:connection}
\end{align}
with $\Gamma(z)$ being the Gamma function.
\end{theorem}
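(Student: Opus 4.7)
The plan is to attack the connection problem via the Riemann--Hilbert formulation of the Painlev\'{e} II hierarchy together with the Deift--Zhou nonlinear steepest descent method. The starting point is the isomonodromic Lax pair (extended Flaschka--Newell) for \eqref{def:PIIhierar}, which yields an RH problem on the $(2n{+}1)$-ray Stokes diagram for an $x$-dependent matrix $\Psi(z)=\Psi_n(z;x)$; the solution $q_{\AS,n}(x;\rho)$ can be recovered from the $(1,2)$-entry of the first subleading term in the expansion of $\Psi$ at $z=\infty$. With the special Stokes data \eqref{eq:SMs}, all jumps collapse except on the two rays carrying $s_1=\rho$ and $s_{2n+1}=-\rho$, so after the standard conjugation $\Psi\mapsto \Psi e^{-\theta_n(z;x)\sigma_3}$ with phase
\begin{equation*}
\theta_n(z;x)=\frac{(-1)^n z^{2n+1}}{2n+1}+xz,
\end{equation*}
one obtains a two-jump RH problem whose jump matrices are triangular and whose oscillatory behavior is controlled by $e^{\pm 2\theta_n(z;x)}$.

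The second step is the steepest descent as $x\to-\infty$. Writing $t=(-x)^{1/(2n)}$ and rescaling $z\mapsto t z$, the saddle-point equation $\theta_n'(z;x)=0$ becomes $z^{2n}=(-1)^{n+1}$, so $2n$ saddles are equally spaced on the unit circle. The two saddles $z=\pm 1$ sit on the real axis and produce real oscillations; the remaining $2n-2$ complex saddles are off the relevant steepest-descent paths after the standard sign-table analysis for $\re\theta_n$. I would then perform the customary $g$-function (or rather a purely algebraic) opening of lenses, deform the two original Stokes rays to steepest-descent contours passing through $z=\pm 1$, and introduce a global parametrix that is simply the identity away from small disks around $z=\pm 1$.

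The third step is the local parametrix. Near each saddle the phase is quadratic after rescaling, $\theta_n(z;x)\sim \pm\bigl(\tfrac{2n}{2n+1}(-x)^{(2n+1)/(2n)}+ n (-x)^{(2n+1)/(2n)}(z\mp 1)^2\bigr)$, so one builds a parabolic cylinder parametrix exactly as in the classical Painlev\'{e} II analysis: the local variable is $\zeta=\sqrt{2n}\,(-x)^{(2n+1)/(4n)}(z\mp 1)$, and the parabolic cylinder model with parameter $\nu=-\tfrac{1}{2\pi i}\ln(1-\rho^2)=\tfrac{i d^2}{2}$ solves the jump exactly in a neighborhood of each saddle. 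Matching this model to the global identity parametrix on the boundary of each disk yields the standard small-norm RH problem and thus the asymptotics \eqref{eq:asypminusinfty}.

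The final step is simply to read off the connection formulas from the matching condition. The amplitude $d=\sqrt{-\ln(1-\rho^2)/\pi}$ comes from the $\nu$ parameter of the parabolic cylinder function. The phase splits into four ingredients: the dynamical phase $\tfrac{2n}{2n+1}(-x)^{(2n+1)/(2n)}$ at the saddles; the logarithmic correction $-\tfrac{2n+1}{4n}d^2\ln(-x)$, produced by the $\zeta^{i d^2}$ prefactor of the parabolic cylinder asymptotics combined with the scaling $\zeta\sim\sqrt{2n}(-x)^{(2n+1)/(4n)}(z\mp 1)$; the constant $\arg\Gamma(i d^2/2)-\pi/4$ from the standard parabolic cylinder connection formula; the term $-\tfrac{d^2}{2}\ln(8n)$ from the $(\sqrt{2n})^{i d^2}$ factor in the local change of variable together with a factor of $2$ coming from the symmetric contribution of the two saddles; and finally the $\tfrac{\pi}{2}\sgn(\rho)$ coming from the sign of $\rho$ in $\arg\rho$. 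The contributions of the two saddles are complex conjugates, so their sum produces the cosine in \eqref{eq:asypminusinfty}. I expect the main obstacle to be the careful bookkeeping of constants in the third and fourth steps, in particular correctly tracking the Jacobian $\sqrt{2n}\,(-x)^{(2n+1)/(4n)}$ of the local change of variable (which is where the factor $\ln(8n)$ enters rather than, say, $\ln(2n)$), and making sure the two saddle contributions combine with the correct relative signs so that the global phase $\varphi$ emerges in exactly the form \eqref{eq:connection}; the remainder $\Boh((-x)^{-1})$ will then follow from the usual small-norm estimate on the error RH problem, provided one keeps the size of the matching disks independent of $x$.
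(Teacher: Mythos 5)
Your plan follows the same route as the paper --- RH formulation for the Painlev\'e II hierarchy, rescaling, lens opening, parabolic cylinder local parametrices, and a small-norm argument --- but there is a genuine gap: the global (outer) parametrix cannot be the identity. After the lens opening, the jump matrix on the interval connecting the two real saddles collapses to the constant diagonal matrix $\begin{pmatrix}1-\rho^2 & 0 \\ 0 & (1-\rho^2)^{-1}\end{pmatrix}$, which does not tend to $I$. Hence one must solve an outer RH problem with this jump, giving $P^{(\infty)}(\zeta)=\left(\frac{\zeta+1/2}{\zeta-1/2}\right)^{\nu\sigma_3}$ with $\nu=-\frac{1}{2\pi i}\ln(1-\rho^2)$. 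With an identity outer parametrix the error problem $R$ would still carry an $O(1)$ jump on that segment, so the small-norm argument would not close; moreover, the $\zeta^{-\nu\sigma_3}$ behavior of the parabolic cylinder parametrix at infinity would have nothing to match on $\partial D(\pm\tfrac12,\delta)$ unless the outer parametrix supplies the corresponding $(\zeta\mp\tfrac12)^{\mp\nu\sigma_3}$ singularity. It is precisely the combination of that outer solution with the local conformal map $\eta(\zeta)\sim 2\sqrt{2n}\,e^{3\pi i/4}(\zeta-\tfrac12)$ that produces $\beta(\tfrac12)^{\pm 2}=(8n\,|x|^{(2n+1)/(2n)})^{\pm\nu}e^{\pm 3\nu\pi i/2}$, from which both the $\ln(8n)$ constant and the logarithmic phase in $\varphi$ are read off; your attribution of the $8n$ to ``a factor of $2$ from the symmetric contribution of the two saddles'' is not where it comes from (the two saddles combine additively to give the cosine, not multiplicatively). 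A secondary point: your phase $\theta_n(z;x)=\frac{(-1)^n z^{2n+1}}{2n+1}+xz$ carries a spurious parity factor. After your rescaling the saddle equation becomes $z^{2n}=(-1)^{n+1}$, which has no real solutions when $n$ is even; the paper's phase $\Xi(\zeta)=\frac{(2\zeta)^{2n+1}}{4n+2}+x\zeta$ is free of this factor (the parity is absorbed into the relation between $\Psi_1$ and $q_{\AS,n}$ in \eqref{eq:qPsi}) and always yields the two real saddles at $\zeta=\pm\tfrac12$ after rescaling by $|x|^{1/(2n)}$. The remaining ingredients of your bookkeeping for $\varphi$ --- the dynamical term, the logarithmic correction, and the parabolic cylinder phase constant --- are essentially correct once the outer parametrix is in place.
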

If $n=1$, Theorem \ref{prop:connection} is due to Ablowitz and Segur \cite{AS-1976,SA-1981}; see also \cite{CM-1988,DZ95,HM80} for rigorous derivations using different methods.

We are now ready to state long-time asymptotics of the mKdV hierarchy. It comes out that, in the self-similarity region $|x| < C t^{1/(2n+1)}$, where $C= C(n)$ is a positive constant for fixed $n$, the solution of the mKdV hierarchy \eqref{mkdvhierarchy} with Schwartz space initial data admits a uniform expansion to all orders in powers of $t^{-1/(2n+1)}$ with smooth coefficients. Furthermore, the leading coefficient is described by the generalized Ablowitz-Segur solution $q_{\AS,n}$ with the parameter explicitly determined by the reflection coefficient according to the inverse scattering transform on the real line.

\begin{theorem}\label{mainth1}
Let $u(x,t)$ be the solution for each of the $n$-th member of the mKdV hierarchy \eqref{mkdvhierarchy} with initial condition $u_0(x)=u(x,0)\in \mathcal{S}(\R)$. As $t\to \infty$, we have

\begin{align}\label{asymptoticsinIV}
u(x,t) = \sum_{j=1}^N \frac{u_j(y)}{t^{\frac{j}{2n+1}}} + \Boh \big(t^{-\frac{N+1}{2n+1}}\big),  \qquad y \doteq (-1)^{n+1} \frac{x} {((2n+1)t)^{\frac{1}{2n+1}}},
\end{align}
uniformly for $|x| \leq C t^{1/(2n+1)}$ with fixed $C > 0$ and $N \geq 1$, where $\{u_j(y)\}_{j\in \N}$ are smooth functions of $y \in \R$ and
\begin{align}\label{u1expression}
u_1(y)=(2n+1)^{-\frac{1}{2n+1}}q_{\AS,n}((-1)^{n+1}y,ir(0)).
\end{align}
In \eqref{u1expression}, $q_{\AS,n}$ is the generalized Ablowitz-Segur solution of the Painlev\'{e} II hierarchy \eqref{def:PIIhierar} as stated in Theorem \ref{prop:connection}, and $r(\lambda)$ is the reflection coefficient associated with the initial date $u_0$.

\end{theorem}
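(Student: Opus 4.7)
The plan is to carry out a Deift--Zhou nonlinear steepest descent analysis of the RH problem encoding the inverse scattering transform for the $n$-th member of the hierarchy, generalizing the treatment of \cite{CL-2019} for the case $n=1$. The first task is to set up a $2\times 2$ RH problem on $\R$ whose solution $M(x,t;\lambda)$ recovers $u(x,t)$ through its large-$\lambda$ expansion, with jump matrix built from the reflection coefficient $r(\lambda)$ of $u_0$ and the oscillatory factor $e^{it\theta_n(\lambda)}$. For the $n$-th flow the phase $\theta_n(\lambda)$ is an odd polynomial of degree $2n+1$, and the rescaling $\lambda = k/((2n+1)t)^{1/(2n+1)}$ converts $t\theta_n(\lambda)$ into $\tau\Phi(k;y)$ with $\tau = ((2n+1)t)^{1/(2n+1)}$ and $y$ the self-similar variable of \eqref{asymptoticsinIV}; for $|y|\leq C(n)$ the saddle structure of $\Phi(\cdot\,;y)$ stays in a fixed compact set around the origin.

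The next step is the standard lens opening: I would use an analytic $\pm$-decomposition of $r(\lambda)-r(0)$, valid because Schwartz initial data give a Schwartz reflection coefficient, to push the oscillatory jumps off the real axis into sectors where $\re(i\tau\Phi)$ has the correct sign. After this transformation the remaining nontrivial jump sits inside a small disk $\mathcal{D}$ around $k=0$, together with a lens carrying exponentially small contributions. Within $\mathcal{D}$ the leading jumps coincide with those of a model RH problem on $2n+1$ Stokes rays whose Stokes multipliers are $s_1=-s_{2n+1}=ir(0)$ and $s_2=\cdots=s_{2n}=0$. By \cite{CCG2019}, this model problem is uniquely solvable for every $y\in\R$ and its coefficient at infinity encodes the generalized Ablowitz--Segur solution $q_{\AS,n}((-1)^{n+1}y;ir(0))$, where $ir(0)\in(-1,1)$ by the reality symmetry of the mKdV scattering data. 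Using the model solution one builds a local parametrix $P(k;y)$ matching the deformed $M$ on $\partial\mathcal{D}$ to order $\tau^{-1}$.

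Then comes the small-norm analysis. The error matrix $E=\tilde M P^{-1}$, with $\tilde M$ the deformed solution and $P$ extended by the identity outside $\mathcal{D}$, solves a RH problem whose jump equals $I+\tau^{-1}J_1(k;y)+\tau^{-2}J_2(k;y)+\cdots$ on $\partial\mathcal{D}$, modulo exponentially small lens contributions. Iterating the Beals--Coifman representation of $E$ and expanding the associated Cauchy integrals in inverse powers of $\tau$ produces the series $E\sim I+\sum_{j\geq 1}E_j(y)/\tau^j$ with each $E_j(y)$ smooth in $y$ because the jumps depend smoothly on $y$ through $\Phi$ and $P$. Reconstructing $u(x,t)$ from the $1/\lambda$ coefficient of $M$ and collecting powers of $\tau$ yields the full expansion in \eqref{asymptoticsinIV}; the leading contribution comes from the expansion of $P$ at infinity and evaluates to $(2n+1)^{-1/(2n+1)}q_{\AS,n}((-1)^{n+1}y;ir(0))$, confirming \eqref{u1expression}.

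The main obstacle will be the uniform construction of the local parametrix over the full range $|y|\leq C$ together with the verification that the matching on $\partial\mathcal{D}$ holds to all orders in $\tau^{-1}$ with smooth remainders. Unlike the $n=1$ setting the model problem is governed by a higher-order Lax pair with $2n+1$ Stokes rays, and one needs both the pole-freeness of $P(\cdot\,;y)$ for every admissible $y$, which reduces to the pole-freeness of $q_{\AS,n}$ on $\R$ established in \cite{CCG2019}, and delicate control of the $y$-dependence of the matching factor near $k=0$. Smoothness of each $u_j(y)$ is then obtained by differentiating the small-norm integral equation in $y$ and invoking uniform convergence of the resulting series, in the spirit of \cite{CL-2019}.
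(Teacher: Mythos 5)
Your proposal follows the same Deift--Zhou steepest descent strategy as the paper: a rescaling to the self-similar variable, a deformation pushing the oscillatory jumps into sectors, a local parametrix built from the Painlev\'e II hierarchy RH problem with Stokes data $s_1=-s_{2n+1}=ir(0)$, $s_2=\cdots=s_{2n}=0$, and a small-norm/Neumann iteration to extract the expansion. However, there are two substantive gaps.

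First, you do not address the parity-dependent structure of the critical points of the phase. When $n$ is even and $\xi\geq 0$ (and symmetrically when $n$ is odd and $\xi\leq 0$) the phase $\Theta$ has two real critical points $\pm\lambda_0$, which in the rescaled coordinate $z$ sit at $\pm z_0=O(1)$ inside the local disk. On $(-z_0,z_0)$ the jump is the full $2\times 2$ matrix $v$ and is $O(1)$, \emph{not} captured by a Painlev\'e hierarchy RH problem posed on rays alone. The paper handles this by introducing the modified local model $m^Z$ (RH problem \ref{modelRHPII}) with an extra jump on $(-z_0,z_0)$ and then, \emph{inside the local parametrix construction}, opens a lens around that segment to reduce to the ray-supported model (proof of Lemma \ref{ZlemmaIVg}). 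A parametrix built purely from the $4n+2$-ray Painlev\'e RH problem would leave an $O(1)$ mismatch on $(-z_0,z_0)$, so the error RH problem would not be small-norm in these cases. You would need to add this lens step and the corresponding $t$-dependent polynomial approximation of $r$ on the segment before your scheme closes.

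Second, your expansion mechanism is under-specified. You use a $t$-independent parametrix and claim the full series $E\sim I+\sum E_j(y)/\tau^j$ falls out of iterating the Beals--Coifman representation. The paper instead builds the Taylor polynomial $p_N(t,z)=\sum_{j\le N}\frac{r^{(j)}(0)}{j!}(z/\tau)^j$ into the local model $m^{\Upsilon^*}$, so that the error jump inside the disk is $O(t^{-(N+1)/(2n+1)})$ and the double expansion in $1/z$ and $t^{-1/(2n+1)}$ (Lemma \ref{YlemmaIV}) is explicit and shows exactly where each coefficient comes from. With your weaker parametrix the error jump inside $\mathcal{D}$ is only $O(\tau^{-1})$ and involves $r_a(\lambda)-r(0)$ on contours that stretch to $|z|\sim\tau\epsilon$; one then has to verify that after integrating against the Cauchy kernel (with the exponential weight $e^{-c|z|^{2n+1}}$) every iterate still has a genuine asymptotic expansion in $\tau^{-1}$ with coefficients smooth in $y$. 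This can be done, but it essentially reproduces the bookkeeping that the paper packages into the polynomial model; asserting it in one sentence is a gap. Also, the decomposition you invoke should be $r=r_a+r_r$ with $r_a$ analytic in sectors bounded by rays emanating from the critical points (Lemmas \ref{decompositionlemmageq}, \ref{decompositionlemmaleq}), not a $\pm$ half-plane decomposition of $r(\lambda)-r(0)$, and the model RH problem is posed on $4n+2$ rays (four of them carrying nontrivial jumps), not $2n+1$.
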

We shall see existence of the solution for each $n$-th member of the mKdV hierarchy with Schwartz class initial data from Lemma \ref{solutionexists} below.

By substituting \eqref{asymptoticsinIV} into \eqref{mkdvhierarchy} and comparing the coefficients of powers of $t^{-1/(2n+1)}$, it follows that the coefficients $u_j(y)$, $j=2,3,\ldots$, in \eqref{asymptoticsinIV} satisfy coupled differential equations, which in general cannot be solved explicitly. If the reflection coefficient vanishes at the origin, however, one can calculate the first few terms as shown in the following theorem.
\begin{theorem}\label{mainth2}
If $r(0) = 0$, then the asymptotic formula (\ref{asymptoticsinIV}) still holds with
\begin{align}\label{u2u3def}
u_1(y) \equiv 0,
\quad
u_2(y)=\frac{r'(0)}{2 \times (2n+1)^{\frac{2}{2n+1}}} \Ai'_{2n+1}(y),
\end{align}
and
\begin{align}\label{u2u3def2}
u_3(y)= -\frac{ir''(0)}{8 \times  (2n+1)^{\frac{3}{2n+1}}} \Ai^{''}_{2n+1}(y),
\end{align}
where the $\Ai_{2n+1}$ is the generalized Airy function defined in \eqref{def:kAiry2}.
\end{theorem}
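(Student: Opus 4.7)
The approach is to revisit the Riemann--Hilbert analysis underlying Theorem~\ref{mainth1} and to exploit the vanishing assumption $r(0)=0$ to convert the nonlinear local parametrix near the origin into a small-norm RH problem that can be solved perturbatively. The identity $u_1\equiv 0$ is immediate from \eqref{u1expression}, since the Stokes data \eqref{eq:SMs} characterizing $q_{\AS,n}(\cdot;\rho)$ become trivial when $\rho=ir(0)=0$; the unique real pole-free solution of the $n$-th member of the Painlev\'e II hierarchy compatible with \eqref{eq:asypinfty}--\eqref{eq:asypminusinfty} at $\rho=0$ is then the zero solution.

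To extract $u_2$ and $u_3$, I would inspect the model RH problem near the origin produced by the nonlinear steepest descent deformations behind Theorem~\ref{mainth1}. Under the natural scaling $\lambda=k/((2n+1)t)^{1/(2n+1)}$, the rescaled phase becomes the $t$-independent quantity $yk+(-1)^n k^{2n+1}/(2n+1)$, and the reflection coefficient admits the Taylor expansion
\begin{equation*}
r\!\left(\frac{k}{((2n+1)t)^{1/(2n+1)}}\right) = \frac{r'(0)\,k}{((2n+1)t)^{1/(2n+1)}} + \frac{r''(0)\,k^2}{2\,((2n+1)t)^{2/(2n+1)}} + \Boh\big(t^{-3/(2n+1)}\big).
\end{equation*}
Every jump matrix of the rescaled model problem is therefore of the form $I+\Boh(t^{-1/(2n+1)})$, and standard small-norm theory provides a convergent Neumann series for the solution in terms of the Cauchy operator along the contour.

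I would then organize this series by powers of $t^{-1/(2n+1)}$ and read off $u_j(y)$ from the large-$k$ reconstruction formula, which expresses $u$ through the leading off-diagonal coefficient of the RH solution at infinity. At order $t^{-2/(2n+1)}$ only the first Neumann iterate contributes, and its integrand carries the factor $k$ inherited from the $r'(0)$ Taylor term; this integral collapses to the contour-integral representation \eqref{def:kAiry2} differentiated once in $y$, giving the $\Ai'_{2n+1}(y)$ factor in $u_2$. Similarly, at order $t^{-3/(2n+1)}$ the $r''(0)$ term yields an integral with $k^2$ in the numerator and hence $\Ai''_{2n+1}(y)$ in $u_3$. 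Possible additional contributions from the second iterate would involve the product $r'(0)\,\overline{r'(0)}$ along crossed contours, but the resulting integrand is analytic inside the relevant loop (the Cauchy kernel against an entire expression), so such cross terms either vanish or are pushed to order $t^{-4/(2n+1)}$ and beyond. Tracking the normalizing constants $((2n+1)t)^{-j/(2n+1)}$ then yields precisely \eqref{u2u3def}--\eqref{u2u3def2}.

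The main obstacle is the careful bookkeeping of contours, orientations, and conjugation relations in the Neumann iteration, in order to collapse the Cauchy integrals onto the single contour $\gamma$ appearing in \eqref{def:kAiry2} and to pin down the exact numerical factors in \eqref{u2u3def}--\eqref{u2u3def2}, in particular the $-i/8$ prefactor in $u_3$. A subsidiary difficulty is justifying that the small-norm Neumann iteration is uniform in $y$ throughout the self-similarity range $|y|\le C$, which should follow from the same $L^2\cap L^\infty$ estimates on the jump matrices already established in the proof of Theorem~\ref{mainth1}.
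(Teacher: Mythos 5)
Your overall strategy mirrors the paper's: express the solution near the origin through a rescaled model RH problem, exploit the vanishing $r(0)=0$ so that the Painlev\'e-II-hierarchy parametrix degenerates to the identity (in the paper's notation $m^P\equiv I$ when $s=ir(0)=0$), and then extract $u_2,u_3$ from the Neumann series for the small-norm RH problem, with each iterate carrying one more power of $t^{-1/(2n+1)}$ and with the $r^{(j)}(0)$ Taylor coefficient producing $\Ai^{(j)}_{2n+1}(y)$ via the contour-integral representation \eqref{def:kAiry2}. The identification of $u_1\equiv 0$ via $q_{\AS,n}(\cdot;0)\equiv 0$ and the mechanism giving $u_2$ are correct.

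However, there is a genuine gap in your treatment of the cross term at order $t^{-3/(2n+1)}$. You assert that the second-iterate contribution, proportional to $r'(0)\overline{r'(0)}$, ``either vanishes or is pushed to order $t^{-4/(2n+1)}$'' because ``the resulting integrand is analytic inside the relevant loop.'' This is false on two counts. First, the integrand involves the boundary value $\hat{\mu}_1^{\Upsilon^*}=\mathcal{C}_-(\hat{w}_1^{\Upsilon^*})$ of a Cauchy transform, which is discontinuous across the contour, so there is no analyticity to invoke and no contour deformation to make the term vanish. Second, the paper computes this term explicitly (see the derivation of \eqref{eq:phi12} in Lemma \ref{YlemmaIV}): it is nonzero at order $t^{-3/(2n+1)}$ and equals
\[
\int_{\Upsilon^*} \hat{\mu}_1^{\Upsilon^*}\hat{w}_1^{\Upsilon^*}\,dz
= \frac{\pi p_1^2}{4}\left(\int_{+\infty}^{y}\bigl(\Ai'_{2n+1}(y')\bigr)^2\,dy'\right)\sigma_3,
\]
a $\sigma_3$-proportional, hence \emph{diagonal}, matrix. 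The reason it does not affect $u_3$ is not that it is absent, but that $u_3$ is read from the $(2,1)$ entry via $u_j(y)=-2(h_j)_{21}(y)$, and a $\sigma_3$-term has vanishing $(2,1)$ entry. Without the explicit computation showing the cross term is diagonal, your argument does not actually establish \eqref{u2u3def2}; asserting the term is negligible would leave a would-be extra contribution of the same order unaccounted for.
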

Theorems \ref{mainth1} and \ref{mainth2} extend the results for the mKdV equation \eqref{eq:mkdv} in \cite{CL-2019} to the mKdV hierarchy \eqref{mkdvhierarchy}. For $n=2$, the leading asymptotics in \eqref{asymptoticsinIV} is also known in \cite{LGWW-2019}.

\subsection{Organization of the paper and notation}\label{notation}
The rest of this paper is devoted to the proofs of our main results, which rely on the Deift/Zhou steepest descent analysis of the associated RH problems and a technique introduced by Charlier and Lenells in \cite{CL-2019} to derive the higher order asymptotic expansion. In Section \ref{background}, we present RH representations for the mKdV hierarchy and the Painlev\'e II hierarchy, respectively. Asymptotic analysis of these RH problems are scattered in Sections \ref{proof:Theorem1}--\ref{proof:th2th3-2}. To analyse the RH problem for the mKdV hierarchy, it is necessary to divide the discussion into several different cases. The asymptotic outcomes will finally lead to the proofs of our main results, i.e., Theorems \ref{prop:connection}--\ref{mainth2}, as shown in Section \ref{sec:proofs}.

We conclude this section with some notation used throughout this paper.
\begin{itemize}

\item If $A$ is an $n \times m$ matrix, the $A_{ij}$ stands for the $(i,j)$-th entry of $A$. We define $|A| \geq 0$ by
$|A|^2 = \sum_{i,j} |A_{ij}|^2$. It is then easily seen that $|A + B| \leq |A| + |B|$ and $|AB| \leq |A| |B|$.

\item For a (piecewise smooth) contour $\gamma \subset \C$ and $1 \leq p \leq \infty$, we write $A \in L^p(\gamma)$  if $|A|$ belongs to $L^p(\gamma)$. $A \in L^p(\gamma)$ if and only if each entry $A_{ij}$ belongs to $L^p(\gamma)$. We also define $\|A\|_{L^p(\gamma)} := \| |A|\|_{L^p(\gamma)}$.

\item For a complex-valued function $f(k)$ of $k \in \C$, we use
\begin{equation}\label{def:schconj}
f^*(k):= \overline{f(\bar{k})}
\end{equation}
to denote its Schwartz conjugate.

\item As usual, the three Pauli matrices $\{\sigma_j\}_{j=1}^3$ are defined by
\begin{equation}\label{def:Pauli}
\sigma_1=\begin{pmatrix}
           0 & 1 \\
           1 & 0
        \end{pmatrix},
        \quad
        \sigma_2=\begin{pmatrix}
        0 & -i \\
        i & 0
        \end{pmatrix},
        \quad
        \sigma_3=
        \begin{pmatrix}
        1 & 0 \\
         0 & -1
         \end{pmatrix}.
\end{equation}

\item Let $D$ be an open connected subset of $\C$ bounded by a piecewise smooth curve $\gamma \subset \hat{\C} := \C \cup \{\infty\}$ and $z_0 \in \C \setminus \bar{D}$. We use $\dot{E}^p(D)$, $1 \leq p < \infty$, to denote the space of all analytic functions $f: D \to \C$ with the property that there exist piecewise smooth curves $\{C_n\}_{n=1}^\infty$ in $D$ tending to $\gamma$ in the sense that $C_n$ eventually surrounds each compact subset of $D$ and such that
$$\sup_{n \geq 1} \int_{C_n} |z - z_0|^{p-2} |f(z)|^p |dz| < \infty.$$
If $D = D_1 \cup \cdots \cup D_n$ is a finite union of such open subsets, then $\dot{E}^p(D)$ denotes the space of analytic functions $f:D\to \C$ such that $f|_{D_j} \in \dot{E}^p(D_j)$ for each $j=1,\ldots,n$.

\item For a (piecewise smooth) oriented contour $\gamma \subset \hat{\C}$ and a function $h$ defined on $\gamma$, the Cauchy transform of $h$ is defined by
\begin{align*}
(\mathcal{C}h)(z) = \frac{1}{2\pi i} \int_\gamma \frac{h(z')dz'}{z' - z}, \qquad z \in \C \setminus \gamma,
\end{align*}
whenever the integral converges. If $h \in L^2(\gamma)$, the left and right non-tangential boundary values of $\mathcal{C}h$ exist a.e. on $\gamma$ and belong to $L^2(\gamma)$, which we denote by $\mathcal{C}_+ h$ and $\mathcal{C}_- h$, respectively. Moreover, $\mathcal{C}_\pm \in \mathcal{B}(L^2(\gamma))$, where $\mathcal{B}(L^2(\gamma))$ is the space of bounded linear operators on $L^2(\gamma)$, and by the Sokhotski-Plemelj relation, it follows that $\mathcal{C}_+ - \mathcal{C}_- = I$,

\item
In what follows, it is understood that most of the quantities encountered should depend on a parameter $n$, which corresponds to the $n$-th member of the mKdV hierarchy or the Painlev\'{e} II hierarchy. For simplicity, we omit this dependence unless otherwise specified.
\end{itemize}

\section{Preliminaries}\label{background}
\subsection{Lax Pair for the mKdV hierarchy}
According to \cite{AKNS} (see also \cite[Proposition 1]{CJM-2006}), the Lax pair for $n$-th equation of the mKdV hierarchy \eqref{mkdvhierarchy} is given by
\begin{align}\label{def:Lax1}
\begin{cases}\phi_x=
\begin{pmatrix}-i \lambda & u \\u & i\lambda\end{pmatrix}\phi,
\\
\phi_{t}=\begin{pmatrix}A & B \\ D & -A \end{pmatrix}\phi,
\end{cases}
\end{align}
where
\begin{equation*}
A=\sum_{j=0}^{2n+1}A_j(i\lambda)^j,\quad B=\sum_{j=0}^{2n}B_j(i\lambda)^j, \quad D=\sum_{j=0}^{2n}D_j(i\lambda)^j,
\end{equation*}
with
\begin{align}
\nonumber A_{2n+1}&=4^n,\quad A_{2k}=0,~~ k=0,1,\ldots,n,\\
\nonumber A_{2k+1}&=\frac{4^{k+1}}{2}\left\{\mathcal{L}_{n-k}[u_x-u^2]-\frac{\partial }{\partial x}\left(\frac{\partial }{\partial x} +2 u \right)\mathcal{L}_{n-k-1}[u_x-u^2]\right\},
~~k=0,1,\ldots,n-1,\\
\nonumber B_{2k+1}&=\frac{4^{k+1}}{2}\left\{\frac{\partial }{\partial x}\left(\frac{\partial }{\partial x} +2 u\right)\mathcal{L}_{n-k-1}[u_x-u^2]\right\},~~ k=0,1, \ldots, n-1,\\
\nonumber B_{2k}&=-4^k\bigg(\frac{\partial }{\partial x} +2 u\bigg)\mathcal{L}_{n-k}[u_x-u^2],~~ k=0,1,\ldots,n,
\\
 \label{eq:DjBj}
D_{k}&=(-1)^kB_{k},~~ k=0,1,\ldots,2n,
\end{align}
and $\mathcal{L}_k$, $k=0,1,\ldots,n$, being the Lenard operators defined in \eqref{LOdef}.
\begin{remark}
If the initial data $u_0(x) \in \mathcal{S}(\R)$,  then $\begin{pmatrix}B(x,0) \\D(x,0)\end{pmatrix}\to \begin{pmatrix}0 \\0\end{pmatrix}$ as $x\to \infty$.
\end{remark}

By introducing the matrix-valued functions
\begin{align*}
Q(x,t)=&\begin{pmatrix}0& u\\u&0 \end{pmatrix}, \\
\tilde{Q}(x,t,\lambda)=&\begin{pmatrix}A(x,t,\lambda)-2^{2n}(i\lambda)^{2n+1} &B(x,t,\lambda)\\D(x,t,\lambda) &-A(x,t,\lambda)+2^{2n}(i\lambda)^{2n+1}\end{pmatrix},
\end{align*}
we could rewrite the Lax pair \eqref{def:Lax1} as
\begin{align}\label{eq:Lax2}
\begin{cases}\psi_x=(i (-1)^{n}\lambda \sigma_3+Q(x,t))\psi\doteq {\bf U}(x,t,\lambda)\psi,\\
\psi_t=(-i 2^{2n} \lambda^{2n+1}\sigma_3+\tilde{Q}(x,t,(-1)^{n+1}\lambda))\psi\doteq {\bf V}(x,t,\lambda)\psi,
\end{cases}
\end{align}
where $\sigma_3$ is the Pauli matrix defined in \eqref{def:Pauli}.
Equivalently, let
\begin{align*}
\psi=\Phi e^{-i((-1)^{n+1}\lambda x+2^{2n} \lambda^{2n+1} t)\sigma_3},
\end{align*}
we have
\begin{align*}
\begin{cases}
\Phi_x+i(-1)^{n+1}\lambda[\sigma_3,\Phi]=Q(x,t) \Phi,\\
\Phi_t+i 2^{2n} \lambda^{2n+1} [\sigma_3,\Phi]=\tilde{Q}(x,t,(-1)^{n+1} \lambda)\Phi.
\end{cases}
\end{align*}
From \eqref{eq:DjBj}, it follows that ${\bf U}(x,t,\lambda)$ and ${\bf V}(x,t,\lambda)$ in \eqref{eq:Lax2} satisfy the following symmetry relations:
\begin{align*}
\bU(x,t,-\lambda)&=\overline{\bU(x,t,\bar{\lambda})}, \qquad\, \bV(x,t,-\lambda)=\overline{\bV(x,t,\bar{\lambda})},
\\
\sigma_1\overline{\bU(x,t,\bar{\lambda})}\sigma_1&=\bU(x,t,\lambda),\quad \sigma_1\overline{\bV(x,t,\bar{\lambda})}\sigma_1=\bV(x,t,\lambda).
\end{align*}
where $\sigma_1$ is given in \eqref{def:Pauli}. This, in turn, implies that
\begin{align*}
\Phi(x,t,-\lambda)=\overline{\Phi(x,t,\bar{\lambda})}, \qquad \sigma_1\overline{\Phi(x,t,\bar{\lambda})}\sigma_1=\Phi(x,t,\lambda).
\end{align*}

\subsection{RH problem for the mKdV hierarchy}\label{overviewsec}
For the Cauchy problem of the mKdV hierarchy with initial data $u_0(x)$, we see from the nonlinear Fourier transform formalism \cite{zmn-1984} and the standard unified method introduced by Fokas \cite{F-2008} that the associated reflection coefficient $r$ is defined by
\begin{align}\label{def:r}
r(\lambda)=\frac{b^\ast(\lambda)}{a(\lambda)},
\end{align}
where $f^\ast$ stands for its Schwartz conjugate \eqref{def:schconj}, and the spectral functions $a(\lambda)$ and $b(\lambda)$ constitute the scattering matrix
\begin{align*}
\begin{pmatrix}a^\ast(\lambda)& b(\lambda)
\\
b^\ast(\lambda)& a(\lambda)
\end{pmatrix}.
\end{align*}
We note that $r$ satisfies the symmetry relation
\begin{align}\label{rsymm}
r(\lambda) = -\overline{r(-\lambda)}, \quad \lambda \in \R,
\end{align}
and if $u_0(x) \in \mathcal{S}(\R)$, then
\begin{align*}
\sup_{\lambda \in \R}|r (\lambda)|<1.
\end{align*}

With the reflection coefficient $r$ in \eqref{def:r}, we define a $2 \times 2$  matrix-valued function $v(x, t, \lambda)$ by
\begin{equation}\label{def:v}
 v(x,t,\lambda) = \begin{pmatrix} 1 - |r(\lambda)|^2 & -\overline{r(\lambda)}e^{-t\Theta(\xi,\lambda)} \\ r(\lambda)e^{t \Theta(\xi,\lambda)} & 1 \end{pmatrix},
\end{equation}
where \begin{equation}\label{def:Thetaxi}
 \Theta(\xi, \lambda) := 2i( (-1)^{n+1}\xi \lambda+ 2^{2n} \lambda^{2n+1} ), \qquad \xi := \frac{x}{t}.
\end{equation}
We then formulate the following RH problem:
\begin{rhp}\label{rhp-org}
\hfill
\begin{enumerate}[label=\emph{(\alph*)}, ref=(\alph*)]
\item $m(x, t, \lambda)$ is analytic for $\lambda \in \C \setminus \R$.
\item For a.e. $\lambda \in \R$, the limiting values $$m_{+/-}(x,t,\lambda):=\lim_{\substack{\lambda' \to \lambda \\\lambda'\textrm{ on the upper/lower half-plance}}}m(x,t,\lambda')$$
exist, and satisfy the jump condition
\begin{align}\label{preRHm}
 m_+(x,t,\lambda) = m_-(x, t, \lambda) v(x, t, \lambda),
 \end{align}
 where the jump matrix $v$ is given in \eqref{def:v}.
\item As $\lambda \to \infty$, we have $m(x,t,\lambda) = I + \Boh(\lambda^{-1})$.
\end{enumerate}
\end{rhp}

From the standard argument (cf. \cite[Theorem 1]{hl-2019}), the relation between the above RH problem and the solution of the mKdV hierarchy is given in the following lemma.
\begin{lemma}\label{solutionexists}
The RH problem \ref{rhp-org} for $m$ has a unique solution for the each $(x,t)\in \R^2$ and the limit $\lim_{\lambda\to \infty} (\lambda m(x,t,\lambda))_{21}$ exists. Moreover, the function $u(x,t)$ defined by
\begin{align}\label{recoveru}
u(x,t) =2\lim_{\lambda\to \infty} (\lambda m(x,t,\lambda))_{21},
\end{align}
is a smooth function with rapid decay as $|x|\to \infty$, which satisfies the $n$-th member of the mKdV hierarchy \eqref{mkdvhierarchy} with Schwartz class initial data.
\end{lemma}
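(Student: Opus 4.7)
The plan is to assemble three standard pieces. \emph{(i) Existence and uniqueness.} Since $u_0\in\mathcal{S}(\R)$ the reflection coefficient $r$ lies in $\mathcal{S}(\R)$, obeys the symmetry \eqref{rsymm}, and satisfies $\sup_{\lambda\in\R}|r(\lambda)|<1$. A direct computation on $\R$ (using that $\Theta(\xi,\lambda)$ is purely imaginary there) gives
\begin{equation*}
v+v^\dagger=\mathrm{diag}\bigl(2(1-|r|^2),\,2\bigr)>0,
\end{equation*}
so Zhou's vanishing lemma rules out a nontrivial homogeneous solution. Combined with the Fredholm alternative applied to the Beals--Coifman operator $I-\mathcal{C}_{w}$ on $L^2(\R)$ (small-norm at large $|\lambda|$ because $r$ decays rapidly), this yields a unique solution $m(x,t,\cdot)-I\in \dot{E}^2(\C\setminus\R)$ for every $(x,t)\in\R^2$. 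Smoothness of $r$ and its rapid decay transfer to joint smoothness of $m$ in $(x,t)$ via differentiation of the singular integral equation.

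\emph{(ii) Reconstruction and Lax pair.} The rapid decay of $v-I$ permits a full asymptotic expansion $m(x,t,\lambda)=I+\sum_{j\ge 1}\lambda^{-j}m^{(j)}(x,t)$ as $\lambda\to\infty$. Setting
\begin{equation*}
\Phi(x,t,\lambda)=m(x,t,\lambda)\,e^{-i((-1)^{n+1}\lambda x+2^{2n}\lambda^{2n+1}t)\sigma_3},
\end{equation*}
the matrices $\Phi_x\Phi^{-1}$ and $\Phi_t\Phi^{-1}$ are jump-free across $\R$ because the oscillatory factor exactly cancels the $x$- and $t$-dependence of $v$; together with the prescribed behaviour at infinity they must therefore be polynomial in $\lambda$ of degrees $1$ and $2n+1$ respectively. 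Matching coefficients against \eqref{eq:Lax2} forces them to coincide with $\bU$ and $\bV$, and the Lenard recursion \eqref{LOdef} expresses $A_j,B_j,D_j$ through \eqref{eq:DjBj} in terms of $u$ and its $x$-derivatives, where $u(x,t):=2(m^{(1)})_{21}$ is exactly the reconstruction formula \eqref{recoveru}. The compatibility $\bU_t-\bV_x+[\bU,\bV]=0$ then reduces to the $n$-th member of \eqref{mkdvhierarchy} for $u$, while evaluation at $t=0$ returns $u_0$ by the inversion theorem of the direct scattering transform.

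\emph{(iii) Rapid decay in $x$.} For $|x|$ large with $t$ on compacta, one carries out the standard lower/upper triangular factorization of $v$ and deforms the contour away from $\R$ using analyticity of the exponentials; the phase $\Theta(\xi,\lambda)$ makes the deformed jump an exponentially small perturbation of $I$ off a bounded neighbourhood of the (finitely many) saddle points. Combined with Schwartz decay of $r$, small-norm estimates give $|m^{(j)}(x,t)|=\Boh(|x|^{-N})$ for every $N$, and the same argument applied after differentiating the integral equation in $x$ shows every $x$-derivative of $u$ also decays rapidly. Thus $u(\cdot,t)\in\mathcal{S}(\R)$.

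The principal obstacle is the bookkeeping in step (ii): one must check that the polynomial $\Phi_t\Phi^{-1}$ read off from the RH problem really assembles into the specific combinations $\mathcal{L}_{n-k}[u_x-u^2]$ dictated by the Lenard recursion, rather than some unrelated polynomial of degree $2n+1$. This is handled by induction on $n$, using that $\Phi_x\Phi^{-1}=\bU$ determines the expansion coefficients $m^{(j)}$ recursively in a way that mirrors \eqref{LOdef}; once this structural identification is in place, the compatibility condition yields \eqref{mkdvhierarchy} automatically.
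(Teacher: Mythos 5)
The paper itself gives no proof of Lemma~\ref{solutionexists}; it defers to the ``standard argument'' of \cite[Theorem 1]{hl-2019}. Your proposal is a correct sketch of precisely that Beals--Coifman/Zhou/dressing route: the computation $v+v^\dagger=\diag(2(1-|r|^2),2)>0$ on $\R$ is right (since $\Theta(\xi,\lambda)$ is purely imaginary there and $\sup_{\R}|r|<1$), so Zhou's vanishing lemma and the Fredholm alternative give unique solvability; the conjugation $e^{i\theta\sigma_3}v\,e^{-i\theta\sigma_3}$ indeed strips the $(x,t)$-dependence from the jump, so the Liouville argument identifies $\Phi_x\Phi^{-1}$ and $\Phi_t\Phi^{-1}$ with $\bU$ and $\bV$ (with the Lenard bookkeeping forced recursively from the $\lambda$-expansion of the $x$-equation, as you note); and small-norm contour deformation supplies the Schwartz decay in $x$.
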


\begin{remark}
By \eqref{rsymm}, it is clear that $r(0)$ is purely imaginary and the jump matrix $v$ defined in \eqref{def:v} satisfies
$$v(x,t,\lambda) = \sigma_1\overline{v(x,t,\bar{\lambda})}^{-1}\sigma_1 = \sigma_1 \sigma_3 v(x,t,-\lambda)^{-1}\sigma_3\sigma_1,\qquad \lambda \in \R.
$$
Thus, by uniqueness of the solution of the RH problem \ref{rhp-org}, it follows that
\begin{align}\label{msymm}
m(x,t,\lambda) = \sigma_1\overline{m(x,t,\bar{\lambda})}\sigma_1 = \sigma_1 \sigma_3 m(x,t,-\lambda)\sigma_3\sigma_1, \qquad \lambda \in \C \setminus \R.
\end{align}
\end{remark}

We will perform asymptotic analysis of the RH problem \ref{rhp-org} for $m$ as $t\to \infty$. Based on the parity of $n$ and the range of $x$, we split the analysis into four cases, namely,
\begin{align*}
\text{\bf Case I} &\doteq \{(x,t) \mid 0\leq {x}\leq M t^{\frac{1}{2n+1}}\}, &\quad n \text{ is odd}, \\
\text{\bf Case II} &\doteq \{(x,t) \mid 0\leq{x}\leq M t^{\frac{1}{2n+1}}\}, &\quad n \text{ is even}, \\
\text{\bf Case III} &\doteq \{(x,t) \mid - M t^{\frac{1}{2n+1}}\leq {x}\leq 0\},&\quad n \text{ is even}, \\
\text{\bf Case IV} &\doteq \{(x,t) \mid -M t^{\frac{1}{2n+1}}\leq x\leq0\}, &\quad n \text{ is odd},
\end{align*}
where $M$ is a positive constant. Since the analysis for {\bf Case III} and {\bf Case IV} is similar to that for {\bf Case I} and {\bf Case II} (see Remark \ref{rk:caseIII} below for a brief comment), it suffices to focus on the first two cases, which is presented in Sections \ref{proof:th2th3-1} and \ref{proof:th2th3-2} below, respectively.

\subsection{RH problem for the Painlev\'e II hierarchy}
We finally give an RH characterization of the generalized Ablowitz-Segur solution $q_{\AS,n}$ of the Painlev\'{e} II hierarchy \eqref{def:PIIhierar}. As aforementioned, the RH problem below is obtained from the general one (cf. \cite{CIK-2010}) by choosing the specified Stokes multipliers \eqref{eq:SMs}.
\begin{rhp}\label{rhp:psi}
\hfill
\begin{enumerate}[label=\emph{(\alph*)}, ref=(\alph*)]
\item $\Psi(\zeta)=\Psi(x, \rho,\zeta)$ is defined and analytic in $\mathbb{C} \setminus
\Upsilon $,
where
\begin{equation}\label{def:Upsilon}
\Upsilon:=\Upsilon_1\cup\Upsilon_{2n+1}\cup\Upsilon_{2n+2}\cup\Upsilon_{4n+2}
\end{equation}
with
\begin{equation}\label{def:Upsilonj}
\Upsilon_j:=\left\{\zeta\in \mathbb{C} ~\bigg |~ \arg \zeta = \frac{2j-1}{4n+2}\pi\right\}, \qquad j=1,\ldots,4n+2.
\end{equation}

\item For $\zeta \in \Upsilon$, we have
\begin{equation*}
\Psi_{+}(\zeta) = \Psi_{-}(\zeta)J_{\Psi}(\zeta),
\end{equation*}
where
\begin{equation*}
J_{\Psi}(\zeta):= \left\{
        \begin{array}{ll}
          \begin{pmatrix} 1 & 0 \\ \rho & 1 \end{pmatrix}, & \qquad \hbox{$\zeta \in \Upsilon_1$,} \\
          \begin{pmatrix} 1 & 0 \\ -\rho & 1 \end{pmatrix}, & \qquad \hbox{$\zeta \in \Upsilon_{2n+1}$,} \\
          \begin{pmatrix} 1 & \rho \\ 0 & 1 \end{pmatrix}, & \qquad \hbox{$\zeta\in \Upsilon_{2n+2}$,} \\
          \begin{pmatrix} 1 & -\rho \\ 0 & 1 \end{pmatrix}, & \qquad \hbox{$\zeta \in \Upsilon_{4n+2}$,}
        \end{array}
      \right.
\end{equation*}
and the orientation of $\Upsilon$ is shown in Figure \ref{fig:jumps-Psi}.

\item As $\zeta \to \infty$, we have
\begin{equation}\label{eq:asyPsi}
 \Psi(\zeta)= \left(I+ \frac{\Psi_1(x)}{\zeta} +\mathcal O(\zeta^{-2}) \right) e^{-i\Xi(\zeta)\sigma_3}
\end{equation}
for some function $\Psi_1$, where
\begin{equation}\label{def:Xi}
\Xi(\zeta)=\Xi(x,\zeta):=\frac{(2\zeta)^{2n+1}}{4n+2}+x\zeta.
\end{equation}

\item $\Psi(\zeta)$ is bounded near the origin.
\end{enumerate}
\end{rhp}
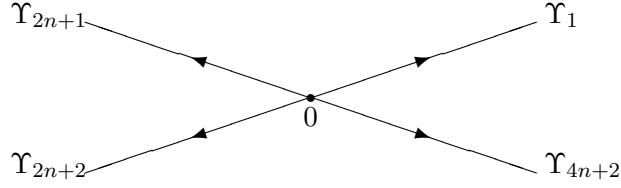
\begin{figure}[t]
\begin{center}
   \setlength{\unitlength}{1truemm}
   \begin{picture}(100,70)(-5,2)
       \put(40,40){\line(-3,-1){30}}
       \put(40,40){\line(-3,1){30}}
       \put(40,40){\line(3,1){30}}
       \put(40,40){\line(3,-1){30}}

       \put(25,45){\thicklines\vector(-3,1){1}}
       \put(25,35){\thicklines\vector(-3,-1){1}}
       \put(55,45){\thicklines\vector(3,1){1}}
       \put(55,35){\thicklines\vector(3,-1){1}}

       \put(39,36.3){$0$}
       \put(40,40){\thicklines\circle*{1}}

       \put(71,50){$\Upsilon_1$}
       \put(71,30){$\Upsilon_{4n+2}$}
       \put(0,50){$\Upsilon_{2n+1}$}
       \put(0,30){$\Upsilon_{2n+2}$}

\end{picture}
   \caption{The jump contour $\Upsilon$ for the RH problem \ref{rhp:psi} for $\Psi$.}
   \label{fig:jumps-Psi}
\end{center}
\end{figure}

Let $\Psi$ be a solution of the above RH problem, by \cite[Proposition 2.3]{CCG2019}, it follows that the function
\begin{equation}\label{eq:qPsi}
q_{\AS,n}(x;\rho):=\left\{
             \begin{array}{ll}
               2i(\Psi_1)_{12}(x)=-2i(\Psi_1)_{21}(x), & \hbox{$n$ odd,} \\
               2i(\Psi_1)_{12}(-x)=-2i(\Psi_1)_{21}(-x), & \hbox{$n$ even,}
             \end{array}
           \right.
\end{equation}
is real for $x\in \mathbb{R}$ and $-1<\rho<1$, where $\Psi_1(x)$ is given in \eqref{eq:asyPsi}. Moreover, it satisfies the Painlev\'{e} II hierarchy \eqref{def:PIIhierar} and the boundary condition \eqref{eq:asypinfty}. We will prove Theorem \ref{prop:connection} by analysing RH problem \ref{rhp:psi} for large negative $x$ in Section \ref{proof:Theorem1} below.
\begin{remark}\label{rk:PII}
One can strengthen the asymptotic behavior \eqref{eq:asyPsi} to be
\begin{align}\label{mPasymptotics}
\Psi(\zeta) = \left(I + \sum_{j=1}^N \frac{\Psi_j(x)}{\zeta^j} + \Boh(\zeta^{-N-1})\right) e^{-i\Xi(\zeta)\sigma_3}, \qquad \zeta \to \infty,
\end{align}
uniformly for $x$ in compact subsets of $\C \setminus \Upsilon$, where $\Psi_j(x)$, $j=1,\ldots, N$, are smooth functions. If $\rho=0$, we have $\Psi(\zeta)\equiv e^{-i\Xi(\zeta)\sigma_3}$, which implies that $q_{\AS,n}\equiv 0$. Moreover, since it is readily seen that both $\sigma_3\Psi(x, \rho,\zeta)\sigma_3$ and $\Psi(x, -\rho,\zeta)$ satisfy the same RH problem, by \eqref{eq:qPsi}, it follows that
\begin{equation}\label{eq:qsym}
q_{\AS,n}(x;-\rho)=-q_{\AS,n}(x;\rho).
\end{equation}
\end{remark}

\section{Asymptotic analysis of the RH problem for $\Psi$}\label{proof:Theorem1}
In this section, we perform a Deift-Zhou steepest descent analysis \cite{DZ1993} to the RH problem \ref{rhp:psi}
for $\Psi$ as $x\to -\infty$. It consists of a series of explicit and invertible transformations which leads
to an RH problem tending to the identity matrix for large negative $x$.
\subsection{First transformation: $\Psi \to X$}
The first transformation is a rescaling and normalization of the RH problem for $\Psi$, which is defined by
\begin{equation}\label{def:X}
X(\zeta)=\Psi(|x|^{\frac{1}{2n}}\zeta)e^{i|x|^{\frac{2n+1}{2n}}\widetilde \Xi(\zeta)\sigma_3},
\end{equation}
where
\begin{equation}\label{def:Theta}
\widetilde \Xi(\zeta):=\frac{2^{2n}}{2n+1}\zeta^{2n+1}-\zeta.
\end{equation}

It is then straightforward to check that $X$ satisfies the following RH problem.
\begin{rhp}\label{rhp:X}
\hfill
\begin{enumerate}[label=\emph{(\alph*)}, ref=(\alph*)]
\item $X(\zeta)$ is defined and analytic in $\mathbb{C} \setminus
\Upsilon $,
where $\Upsilon$ is defined in \eqref{def:Upsilon}.

\item For $\zeta \in \Upsilon$, we have
\begin{equation*}
X_{+}(\zeta) = X_{-}(\zeta)J_{X}(\zeta),
\end{equation*}
where
\begin{align*}
J_{X}(\zeta)&=e^{-i|x|^{\frac{2n+1}{2n}}\widetilde \Xi(\zeta)\sigma_3}J_{\Psi}(\zeta)e^{i|x|^{\frac{2n+1}{2n}}\widetilde \Xi(\zeta)\sigma_3}
\\
&=\left\{
        \begin{array}{ll}
          \begin{pmatrix} 1 & 0 \\ \rho e^{2i|x|^{\frac{2n+1}{2n}}\widetilde \Xi(\zeta)} & 1 \end{pmatrix}, & \qquad \hbox{$\zeta \in \Upsilon_1$,} \\
          \begin{pmatrix} 1 & 0 \\ -\rho e^{2i|x|^{\frac{2n+1}{2n}}\widetilde \Xi(\zeta)} & 1 \end{pmatrix}, & \qquad \hbox{$\zeta \in \Upsilon_{2n+1}$,} \\
          \begin{pmatrix} 1 & \rho e^{-2i|x|^{\frac{2n+1}{2n}}\widetilde \Xi(\zeta)} \\ 0 & 1 \end{pmatrix}, & \qquad \hbox{$\zeta \in \Upsilon_{2n+2}$,} \\
          \begin{pmatrix} 1 & -\rho e^{-2i|x|^{\frac{2n+1}{2n}}\widetilde \Xi(\zeta)} \\ 0 & 1 \end{pmatrix}, & \qquad \hbox{$\zeta \in \Upsilon_{4n+2}$.}
        \end{array}
      \right.
\end{align*}

\item As $\zeta \to \infty$, we have
\begin{equation*}
 X(\zeta)= I+ \frac{\Psi_1(x)}{|x|^{\frac{1}{2n}}\zeta} +\mathcal O(\zeta^{-2}),
\end{equation*}
where $\Psi_1(x)$ is given in \eqref{eq:asyPsi}.

\item $X(\zeta)$ is bounded near the origin.
\end{enumerate}
\end{rhp}

\subsection{Second transformation: $X \to Y$}\label{sectrans}
In the second transformation we apply contour deformations. The four rays $\Upsilon_j$, $j=1,2n+1,2n+2,4n+2$, are replaced by their parallel lines emanating from some special points on the real line. More precisely, we replace $\Upsilon_1$ and $\Upsilon_{4n+2}$ by their parallel rays $\widetilde \Upsilon_1$ and $\widetilde \Upsilon_{4n+2}$ emanating from the point 1/2, and replace $\Upsilon_{2n+1}$ and $\Upsilon_{2n+2}$ by their parallel rays $\widetilde \Upsilon_{2n+1}$ and $\widetilde \Upsilon_{2n+2}$ emanating from the point $-1/2$; see Figure \ref{fig:jumpY} for an illustration.
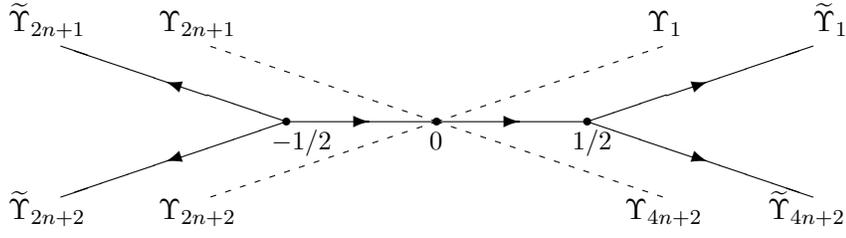
\begin{figure}[t]
\begin{center}
   \setlength{\unitlength}{1truemm}
   \begin{picture}(100,70)(-5,2)

       \put(60,40){\line(-1,0){40}}
       \put(60,40){\line(3,1){30}}
       \put(60,40){\line(3,-1){30}}
       \put(20,40){\line(-3,-1){30}}
       \put(20,40){\line(-3,1){30}}

       \dashline{0.8}(40,40)(10,30)
        \dashline{0.8}(40,40)(10,50)
         \dashline{0.8}(40,40)(70,30)
         \dashline{0.8}(40,40)(70,50)

       \put(5,45){\thicklines\vector(-3,1){1}}
       \put(5,35){\thicklines\vector(-3,-1){1}}
       \put(75,45){\thicklines\vector(3,1){1}}
       \put(75,35){\thicklines\vector(3,-1){1}}
       \put(30,40){\thicklines\vector(1,0){1}}
       \put(50,40){\thicklines\vector(1,0){1}}

       \put(39,36.3){\small{$0$}}
       \put(18,36.3){\small{$-1/2$}}
       \put(58,36.3){\small{$1/2$}}

       \put(40,40){\thicklines\circle*{1}}
       \put(20,40){\thicklines\circle*{1}}
       \put(60,40){\thicklines\circle*{1}}

       \put(68,52){$\Upsilon_1$}
       \put(65,27){$\Upsilon_{4n+2}$}
       \put(3,52){$\Upsilon_{2n+1}$}
       \put(3,27){$\Upsilon_{2n+2}$}
       \put(90,52){$\widetilde\Upsilon_1$}
       \put(-17,52){$\widetilde\Upsilon_{2n+1}$}
       \put(-17,27){$\widetilde\Upsilon_{2n+2}$}
       \put(84,27){$\widetilde\Upsilon_{4n+2}$}
\end{picture}
   \caption{The jump contour $\Sigma_{Y}$ for the RH problem \ref{rhp:Y} for $Y$.}
   \label{fig:jumpY}
\end{center}
\end{figure}

The second transformation is defined as follows.
\begin{equation}\label{def:Y}
Y(\zeta)=\left\{
           \begin{array}{ll}
             X(\zeta)\begin{pmatrix} 1 & 0 \\ \rho e^{2i|x|^{\frac{2n+1}{2n}}\widetilde \Xi(\zeta)} & 1 \end{pmatrix}, & \text{$\zeta$ between $\Upsilon_1$ and $\widetilde\Upsilon_1$}\\ & \text{and $\zeta$ between $\Upsilon_{2n+1}$ and $\widetilde\Upsilon_{2n+1}$, } \\
             X(\zeta)\begin{pmatrix} 1 & \rho e^{-2i|x|^{\frac{2n+1}{2n}}\widetilde \Xi(\zeta)} \\ 0 & 1 \end{pmatrix}, & \text{$\zeta$ between $\Upsilon_{2n+2}$ and $\widetilde\Upsilon_{2n+2}$}
 \\
& \text{and $\zeta$ between $\Upsilon_{4n+2}$ and $\widetilde\Upsilon_{4n+2}$,}  \\
             X(\zeta), & \hbox{elsewhere.}
           \end{array}
         \right.
\end{equation}

In view of the RH problem \ref{rhp:X} for $X$ and \eqref{def:Y}, it is readily seen that $Y$ satisfies the following RH problem.
\begin{rhp}\label{rhp:Y}
\hfill
\begin{enumerate}[label=\emph{(\alph*)}, ref=(\alph*)]
\item $Y(\zeta)$ is defined and analytic in $\mathbb{C} \setminus
\Sigma_Y $,
where
\begin{equation*}
\Sigma_Y:=\widetilde\Upsilon_1\cup\widetilde\Upsilon_{2n+1}\cup\widetilde\Upsilon_{2n+2}\cup\widetilde\Upsilon_{4n+2}\cup[-1/2,1/2];
\end{equation*}
see the solid lines in Figure \ref{fig:jumpY}.

\item For $\zeta \in \Sigma_Y$, we have
\begin{equation*}
Y_{+}(\zeta) = Y_{-}(\zeta)J_{Y}(\zeta),
\end{equation*}
where
\begin{equation}\label{def:JY}
J_{Y}(\zeta)=\left\{
        \begin{array}{ll}
          \begin{pmatrix} 1 & 0 \\ \rho e^{2i|x|^{\frac{2n+1}{2n}}\widetilde \Xi(\zeta)} & 1 \end{pmatrix}, & \qquad \hbox{$\zeta \in \widetilde\Upsilon_1$,} \\
          \begin{pmatrix} 1 & 0 \\ -\rho e^{2i|x|^{\frac{2n+1}{2n}}\widetilde \Xi(\zeta)} & 1 \end{pmatrix}, & \qquad \hbox{$\zeta \in \widetilde\Upsilon_{2n+1}$,} \\
          \begin{pmatrix} 1 & \rho e^{-2i|x|^{\frac{2n+1}{2n}}\widetilde \Xi(\zeta)} \\ 0 & 1 \end{pmatrix}, & \qquad \hbox{$\zeta \in \widetilde\Upsilon_{2n+2}$,} \\
          \begin{pmatrix} 1 & -\rho e^{-2i|x|^{\frac{2n+1}{2n}}\widetilde \Xi(\zeta)} \\ 0 & 1 \end{pmatrix}, & \qquad \hbox{$\zeta \in \widetilde\Upsilon_{4n+2}$,} \\
          \begin{pmatrix} 1-\rho^2 & -\rho e^{-2i|x|^{\frac{2n+1}{2n}}\widetilde \Xi(\zeta)} \\ \rho e^{2i|x|^{\frac{2n+1}{2n}}\widetilde \Xi(\zeta)} & 1 \end{pmatrix}, & \qquad \hbox{$\zeta \in (-\frac12,\frac12)$.}
        \end{array}
      \right.
\end{equation}

\item As $\zeta \to \infty$, we have
\begin{equation*}
 Y(\zeta)= I+ \frac{\Psi_1(x)}{|x|^{\frac{1}{2n}}\zeta} + \mathcal O(\zeta^{-2}),
\end{equation*}
where $\Psi_1(x)$ is given in \eqref{eq:asyPsi}.

\item $Y(\zeta)$ is bounded near $\zeta=\pm 1/2$.
\end{enumerate}
\end{rhp}

\subsection{Third transformation: $Y \to T$}

As $x \to -\infty$, it comes out that $J_Y(\zeta)$ tends to the identity matrix exponentially fast except for $\zeta \in (-1/2,1/2)$, as evidenced in Figure \ref{fig:jumpT}. Since $\im \widetilde \Xi(\zeta)=0$ for $\zeta \in (-1/2,1/2)$, we have that
$J_Y(\zeta)$ is highly oscillatory for large negative $x$. The third transformation then involves the so-called lens opening, which is based on the following factorization:
\begin{equation*}
J_{Y}(\zeta)
=
\begin{pmatrix}
1 & 0
\\
\frac{\rho}{1-\rho^2}e^{2i|x|^{\frac{2n+1}{2n}}\widetilde \Xi(\zeta)} & 1
\end{pmatrix}
\begin{pmatrix}
1-\rho^2 & 0
\\
0 & \frac{1}{1-\rho^2}
\end{pmatrix}
\begin{pmatrix}
1 & -\frac{\rho}{1-\rho^2}e^{-2i|x|^{\frac{2n+1}{2n}}\widetilde \Xi(\zeta)}
\\
0 & 1
\end{pmatrix}
\end{equation*}
for $\zeta\in(-1/2,1/2)$.

\begin{figure}
\centering
\subfigure[$n=2$]{
    \begin{minipage}[b]{0.3\linewidth} 
    \begin{overpic}[width=5cm]{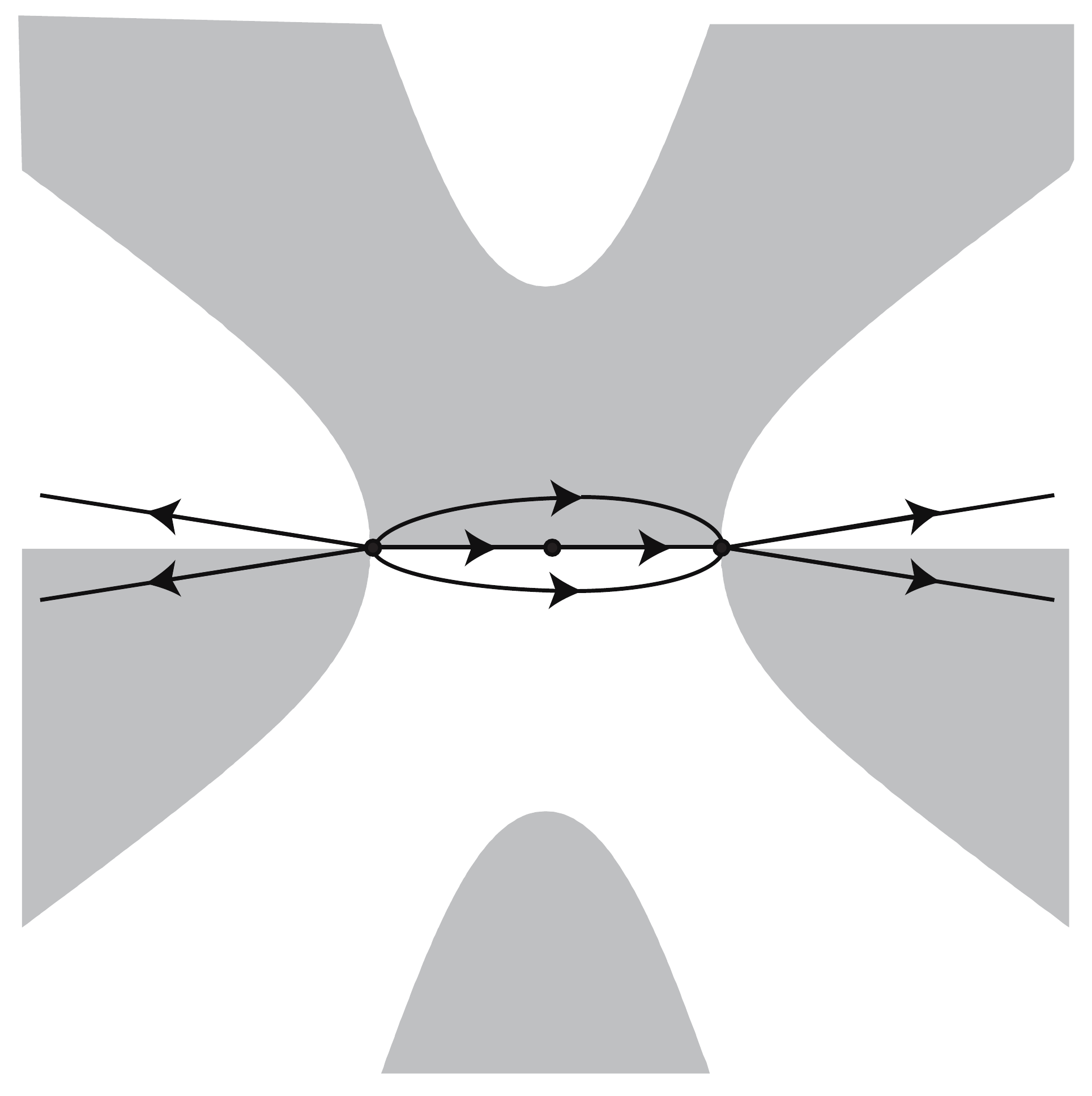}\vspace{1pt}
    \put(50,38){\tiny $\Sigma_2$}
      \put(50,59){\tiny $\Sigma_1$}
      \put(24,42.5){\tiny $-1/2$}
       \put(59,42.5){\tiny $1/2$}
        \put(-3,40.5){\tiny $\widetilde\Upsilon_{2n+2}$}
         \put(-3,58.5){\tiny $\widetilde\Upsilon_{2n+1}$}
         \put(95,40.5){\tiny $\widetilde\Upsilon_{4n+2}$}
         \put(95,58.5){\tiny $\widetilde\Upsilon_{1}$}
      \put(34,50){\circle*{2}}
       \put(65.8,50){\circle*{2}} 
    \end{overpic}
\end{minipage}}
\quad 
\qquad
\qquad
\subfigure[$n=3$]{
    \begin{minipage}[b]{0.35\linewidth}
    \includegraphics[width=5.3cm]{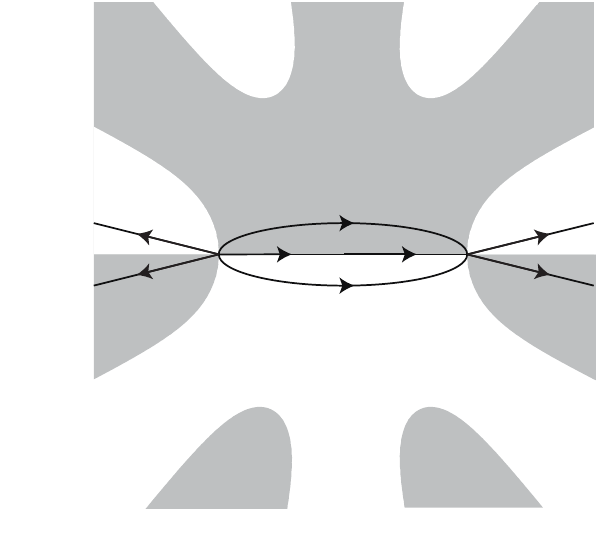}\vspace{1pt}
    \end{minipage}
}
\caption{\label{fig:jumpT}
       The jump contour $\Sigma_{T}$ for the RH problem \ref{rhp:T} for $T$. The shaded areas indicate the regions where $\re i \widetilde \Xi > 0$ for $n=2$ (left) and $n=3$ (right).}
\end{figure}

By opening lens around $(-1/2,1/2)$ as shown in Figure \ref{fig:jumpT} so that $\re i \widetilde \Xi(\zeta)>0$ for $\zeta \in \Sigma_1$ and $\re i \widetilde \Xi(\zeta) < 0$ for $\zeta \in \Sigma_2$, the third transformation is defined by
\begin{equation} \label{def:T}
  T(\zeta) =
  \begin{cases}
    Y(\zeta), & \text{$\zeta$ outside the lens}, \\
    Y(\zeta)
    \begin{pmatrix}
1 & \frac{\rho}{1-\rho^2}e^{-2i|x|^{\frac{2n+1}{2n}}\widetilde \Xi(\zeta)}
\\
0 & 1
\end{pmatrix},
    & \text{$\zeta$ in the upper part of the lens}, \\
    Y(\zeta)
    \begin{pmatrix}
1 & 0
\\
\frac{\rho}{1-\rho^2}e^{2i|x|^{\frac{2n+1}{2n}}\widetilde \Xi(\zeta)} & 1
\end{pmatrix},
    & \text{$\zeta$ in the lower part of the lens.}
  \end{cases}
\end{equation}
It is then straightforward to check that $T$ satisfies the following RH problem
\begin{rhp}\label{rhp:T}
\hfill
\begin{enumerate}[label=\emph{(\alph*)}, ref=(\alph*)]
\item $T(\zeta)$ is defined and analytic in $\mathbb{C} \setminus
\Sigma_T $,
where
\begin{equation}\label{def:sigmaT}
\Sigma_T:=\widetilde\Upsilon_1\cup\widetilde\Upsilon_{2n+1}\cup\widetilde\Upsilon_{2n+2}\cup\widetilde\Upsilon_{4n+2}\cup[-1/2,1/2]\cup \Sigma_1 \cup \Sigma_2;
\end{equation}
see Figure \ref{fig:jumpT} for an illustration.

\item For $\zeta \in \Sigma_T$, we have
\begin{equation*}
T_{+}(\zeta) = T_{-}(\zeta)J_{T}(\zeta),
\end{equation*}
where
\begin{equation}\label{def:JT}
J_{T}(\zeta)=\left\{
        \begin{array}{ll}
          \begin{pmatrix} 1 & -\frac{\rho}{1-\rho^2}e^{-2i|x|^{\frac{2n+1}{2n}}\widetilde \Xi(\zeta)} \\ 0 & 1 \end{pmatrix}, & \qquad \hbox{$\zeta \in \Sigma_1$,} \\
           \begin{pmatrix} 1 & 0 \\ \frac{\rho}{1-\rho^2}e^{2i|x|^{\frac{2n+1}{2n}}\widetilde \Xi(\zeta)} & 1 \end{pmatrix}, & \qquad \hbox{$\zeta \in \Sigma_2$,} \\
           \begin{pmatrix} 1-\rho^2 & 0 \\ 0 & \frac{1}{1-\rho^2} \end{pmatrix}, & \qquad \hbox{$\zeta \in (-\frac12,\frac12)$,} \\
           J_Y(\zeta), & \qquad \hbox{elsewhere,} \\
         \end{array}
      \right.
\end{equation}
and where $J_Y(\zeta)$ is given in \eqref{def:JY}.

\item As $\zeta \to \infty$, we have
\begin{equation*}
 T(\zeta)= I+ \frac{\Psi_1(x)}{|x|^{\frac{1}{2n}}\zeta} +\mathcal O(\zeta^{-2}),
\end{equation*}
where $\Psi_1(x)$ is given in \eqref{eq:asyPsi}.

\item $T(\zeta)$ is bounded near $\zeta=\pm 1/2$.
\end{enumerate}
\end{rhp}

\subsection{Global parametrix}\label{sec:global}
It is now easily seen that the jump matrix $J_T$ tends to the identity matrix except for $\zeta \in (-1/2,1/2)$. We are then led to consider the following RH problem for the global parametrix $P^{(\infty)}$.
\begin{rhp}\label{rhp:N}
\hfill
\begin{enumerate}[label=\emph{(\alph*)}, ref=(\alph*)]
\item $P^{(\infty)}(\zeta)$ is defined and analytic in $\mathbb{C} \setminus [-1/2,1/2]$.

\item For $\zeta \in (-1/2,1/2)$, we have
\begin{equation*}
P^{(\infty)}_{+}(\zeta) = P^{(\infty)}_{-}(\zeta)\begin{pmatrix} 1-\rho^2 & 0 \\ 0 & \frac{1}{1-\rho^2} \end{pmatrix}.
\end{equation*}

\item As $\zeta \to \infty$, we have
\begin{equation*}
P^{(\infty)}(\zeta)= I +\mathcal O(\zeta^{-1}).
\end{equation*}

\end{enumerate}
\end{rhp}

One can check that the solution to the above RH problem is explicitly given by
\begin{equation}\label{def:N}
P^{(\infty)}(\zeta)=\left(\frac{\zeta+1/2}{\zeta-1/2}\right)^{\nu \sigma_3}, \qquad \nu=-\frac{1}{2\pi i}\ln (1-\rho^2),
\end{equation}
where the branch cut of $\left(\frac{\zeta+1/2}{\zeta-1/2}\right)^{\nu}$ is taken along the interval $[-1/2,1/2]$ such that $\left(\frac{\zeta+1/2}{\zeta-1/2}\right)^{\nu } \to 1$ as $\zeta \to \infty$.

\subsection{Local parametrices near $\zeta=\pm 1/2$}\label{sec:local}
Since the convergence of $J_T$ to the identity matrix is not uniform near $\zeta=\pm 1/2$, we have to build local parametrices around these two points. Denote by $D(z_0,\delta)$ a fixed open disc centered at $z_0$ with radius $\delta>0$, the local parametrix near $\zeta=1/2$ reads as follows
\begin{rhp}\label{rhp:pr}
\hfill
\begin{enumerate}[label=\emph{(\alph*)}, ref=(\alph*)]
\item $P^{(\frac12)}(\zeta)$ is defined and analytic in $D(1/2,\delta) \setminus \Sigma_T$, where $\Sigma_T$ is defined \eqref{def:sigmaT}.

\item For $\zeta \in D(1/2,\delta) \cap \Sigma_T$, we have
\begin{equation*}
P^{(\frac12)}_{+}(\zeta) = P^{(\frac12)}_{-}(\zeta)J_T(\zeta),
\end{equation*}
where $J_T(\zeta)$ is defined in \eqref{def:JT}.

\item As $x \to -\infty$, $P^{(\frac12)}(\zeta)$ matches $P^{(\infty)}(\zeta)$ on the boundary $\partial D(1/2,\delta)$ of $D(1/2,\delta)$, i.e.,
\begin{equation}\label{eq:asypr}
 P^{(\frac12)}(\zeta)= (I+ \mathcal O(|x|^{-\frac{2n+1}{4n}}))P^{(\infty)}(\zeta),
\end{equation}
where $P^{(\infty)}(\zeta)$ is given in \eqref{def:N}.
\end{enumerate}
\end{rhp}

We can construct $P^{(\frac12)}(\zeta)$ explicitly by using the parabolic cylinder parametrix $\Psi^{(\PC)}$ introduced in Appendix \ref{sec:para}, following the strategy in \cite[Section 9.4]{FIKN2006}. To proceed, we define
\begin{equation}\label{def:eta}
\eta(\zeta):=2(-i\widetilde \Xi(\zeta)+i\widetilde \Xi(1/2))^{\frac12},
\end{equation}
where $\widetilde \Xi(\zeta)$ is defined in \eqref{def:Theta} and the branch cut of $(\cdot)^{1/2}$ is chosen such that $\arg (\zeta-1/2)\in (-\pi,\pi)$. It is readily seen that
\begin{equation}\label{eq:etalocal}
\eta(\zeta)\sim e^{\frac{3\pi i}{4}}2\sqrt{2n}(\zeta-\frac12), \qquad \zeta \to \frac12.
\end{equation}
Let $\Psi^{(\PC)}(\zeta;\nu)$ be the parabolic cylinder parametrix with $\nu$ given in \eqref{def:N} (see Appendix \ref{sec:para} below), we set, for $\zeta \in D(\frac12,\delta) \setminus \Sigma_T$,
\begin{equation}\label{def:phalf}
P^{(\frac12)}(\zeta)=E(\zeta)\Psi^{(\PC)}(|x|^{\frac{2n+1}{4n}}\eta(\zeta);\nu)e^{i|x|^{\frac{2n+1}{2n}}\widetilde \Xi(\zeta)\sigma_3}\left(i\frac{h_1}{\rho}\right)^{\sigma_3/2}
\begin{pmatrix}
1 & 0
\\
0 & -i
\end{pmatrix}
\end{equation}
where $\eta(\zeta)$ is defined in \eqref{def:eta}, $h_1=\frac{\sqrt{2 \pi}}{\Gamma(-\nu)}e^{i\pi \nu}$, and
\begin{equation*}
E(\zeta):=\begin{pmatrix}
1 & 0
\\
0 & i
\end{pmatrix}
(\beta(\zeta))^{\sigma_3}\left(i\frac{2h_1}{\rho}\right)^{-\sigma_3/2}e^{in|x|^{\frac{2n+1}{2n}}\sigma_3/(2n+1)}
\begin{pmatrix}
|x|^{\frac{2n+1}{4n}}\eta(\zeta) & 1
\\
1 & 0
\end{pmatrix}
\end{equation*}
and where
\begin{equation}\label{def:beta}
\beta(\zeta):=\left(|x|^{\frac{2n+1}{4n}}\eta(\zeta)\frac{\zeta+1/2}{\zeta-1/2}\right)^{\nu}.
\end{equation}
By \eqref{eq:etalocal}, it follows that $\beta(\zeta)$ is analytic near $\zeta=1/2$ with
$$\beta(1/2)=(2\sqrt{2n}|x|^{\frac{2n+1}{4n}})^\nu e^{3\nu\pi i/4},$$ which also implies that $E(\zeta)$ is an analytic prefactor in $ D(\frac12,\delta)$. From the RH problem \ref{rhp:PC} for $\Psi^{(\PC)}$, it is straightforward to show (cf. \cite{FIKN2006} for the case $k=1$) that $P^{(\frac12)}(\zeta)$ defined in \eqref{def:phalf} indeed solves the RH problem \ref{rhp:pr}. Moreover, the matching condition \eqref{eq:asypr} now reads
\begin{multline}\label{eq:prmatching}
 P^{(\frac12)}(\zeta)=
\Bigg(I+
\begin{pmatrix}
0 & -\frac{\rho\nu}{h_1}e^{2n|x|^{\frac{2n+1}{2n}}\frac{i}{(2n+1)}}\frac{\beta(\zeta)^2}{\eta(\zeta)}
\\
-\frac{h_1}{\rho}e^{-2n|x|^{\frac{2n+1}{2n}}\frac{i}{(2n+1)}}\frac{1}{\beta(\zeta)^{2}\eta(\zeta)} & 0
\end{pmatrix}
|x|^{-\frac{2n+1}{4n}}
\\
+\mathcal O(|x|^{-\frac{2n+1}{2n}})\Bigg)P^{(\infty)}(\zeta),
\end{multline}
as $x\to -\infty$, uniformly for $\zeta \in \partial D(1/2, \delta)$.

Similarly, near $\zeta=-1/2$, we intend to find a function $P^{(-\frac12)}$ satisfying the following RH problem.
\begin{rhp}\label{rhp:pl}
\hfill
\begin{enumerate}[label=\emph{(\alph*)}, ref=(\alph*)]
\item $P^{(-\frac12)}(\zeta)$ is defined and analytic in $D(-1/2,\delta) \setminus \Sigma_T$, where $\Sigma_T$ is defined \eqref{def:sigmaT}.

\item For $\zeta \in D(-1/2,\delta) \cap \Sigma_T$, we have
\begin{equation*}
P^{(-\frac12)}_{+}(\zeta) = P^{(-\frac12)}_{-}(\zeta)J_T(\zeta),
\end{equation*}
where $J_T(\zeta)$ is defined in \eqref{def:JT}.

\item As $x \to -\infty$, we have, for $\zeta \in \partial D(-1/2,\delta)$
\begin{equation}\label{eq:asypl}
 P^{(-\frac12)}(\zeta)= (I+ \mathcal O(|x|^{-\frac{2n+1}{4n}}))P^{(\infty)}(\zeta),
\end{equation}
where $P^{(\infty)}(\zeta)$ is given in \eqref{def:N}.
\end{enumerate}
\end{rhp}

From the symmetry of $J_{T}$, one can check directly that
\begin{equation}\label{def:Pl}
 P^{(-\frac12)}(\zeta)=\sigma_1  P^{(\frac12)}(-\zeta) \sigma_1
\end{equation}
with $\sigma_1$ and $P^{(\frac12)}(\zeta)$ given \eqref{def:Pauli} and \eqref{def:phalf} solves the above RH problem.

\subsection{Final transformation}
The final transformation is defined by
\begin{equation}\label{def:R}
R(\zeta)=\left\{
           \begin{array}{ll}
             T(\zeta)P^{(\infty)}(\zeta)^{-1}, & \hbox{$\zeta \in \mathbb{C} \setminus \{D(\frac12,\delta)\cup D(-\frac12,\delta)\cup \Sigma_T \}$,} \\
             T(\zeta)P^{(\frac12)}(\zeta)^{-1}, & \hbox{$\zeta  \in  D(\frac12,\delta)$,} \\
             T(\zeta)P^{(-\frac12)}(\zeta)^{-1}, & \hbox{$\zeta \in D(-\frac12,\delta)$.}
           \end{array}
         \right.
\end{equation}
It is then easily seen that $R$ satisfies the following RH problem.

\begin{rhp}\label{rhp:R}
\hfill
\begin{enumerate}[label=\emph{(\alph*)}, ref=(\alph*)]
\item $R(\zeta)$ is defined and analytic in $\mathbb{C} \setminus \Sigma_R$,
where
$$
\Sigma_R:=\Sigma_T \cup \partial D(\frac12,\delta) \cup \partial D(-\frac12,\delta) \setminus \{(-\frac12,\frac12)\cup D(\frac12,\delta)\cup D(-\frac12,\delta)\}.
$$

\item For $\zeta \in \Sigma_R$, we have
\begin{equation}\label{jumps:R}
R_{+}(\zeta) = R_{-}(\zeta)J_R(\zeta),
\end{equation}
where
\begin{equation}\label{def:JR}
J_R(\zeta)=\left\{
             \begin{array}{ll}
              P^{(\frac12)}(\zeta)P^{(\infty)}(\zeta)^{-1}, & \hbox{$\zeta \in \partial D(\frac12,\delta)$,} \\
              P^{(-\frac12)}(\zeta)P^{(\infty)}(\zeta)^{-1}, & \hbox{$\zeta \in \partial D(-\frac12,\delta)$,} \\
              P^{(\infty)}(\zeta)J_T(\zeta)P^{(\infty)}(\zeta)^{-1}, & \hbox{$\zeta \in \Sigma_R  \setminus \left\{\partial D(\frac12,\delta)\cup \partial D(-\frac12,\delta)\right\}$,}
             \end{array}
           \right.
\end{equation}
and where $J_T(\zeta)$ is defined in \eqref{def:JT} and the orientation of $\partial D(\pm1/2,\delta)$ is taken in a clock-wise manner.

\item As $\zeta \to \infty$, we have,
\begin{equation*}
 R(\zeta)=I+\mathcal O(\zeta^{-1}).
\end{equation*}
\end{enumerate}
\end{rhp}

From \eqref{def:JT}, \eqref{eq:asypr} and \eqref{eq:asypl}, it is readily seen from \eqref{jumps:R} that, as $x\to -\infty$,
\begin{equation}\label{eq:JRest}
J_R(\zeta)=\left\{
             \begin{array}{ll}
               I+\Boh(|x|^{-\frac{2n+1}{4n}})), & \hbox{$\zeta \in \partial D(\frac12,\delta) \cup \partial D(-\frac12,\delta)$,}
               \\
               I+\Boh(e^{-c|x|^{-\frac{2n+1}{2n}})}) , & \hbox{$\zeta \in \Sigma_R \setminus \{\partial D(\frac12,\delta) \cup \partial D(-\frac12,\delta)\}$,}
             \end{array}
           \right.
\end{equation}
for some $c>0$. An appeal to the standard small norm arguments (cf. \cite{DZ1993,Deiftbook}) then shows that
\begin{equation}\label{eq:Rest}
R(\zeta)=I+\Boh(|x|^{-\frac{2n+1}{4n}}), \qquad x\to -\infty,
\end{equation}
uniformly for $\zeta \in \mathbb{C} \setminus \Sigma_R$.

For later use, it is worth noting that $R$ is also characterized by the following integral equation
\begin{equation}\label{eq:Rintegral}
R(\zeta)=I+\frac{1}{2\pi i}\int_{\Sigma_R}\frac{R_-(s)(J_R(s)-I)}{s-\zeta}ds.
\end{equation}

\section{Asymptotic analysis of the RH problem for $m$: {\bf Case I}} \label{proof:th2th3-1}
In this section, we analyse the RH problem \ref{rhp-org} for $m$ as $t\to \infty$ in {\bf Case~ I}, that is,
\begin{align*}
\{(x,t) \mid 0 \leq x \leq M t^{\frac{1}{2n+1}}\}, \qquad n \text{ is odd},
\end{align*}
where $M>0$. In this case, the critical points of phase function $\Theta$ in \eqref{def:Thetaxi} are
\begin{equation*}
\lambda_0^{(n,j)} = \sqrt[2n]{\frac{\xi}{(2n+1) 2^{2n} }} e^{i\frac{(2j-1)\pi}{2n}},\qquad j=1,2,\ldots,2n,
\end{equation*}
which are not on the real line and approach $0$ at least as fast as $t^{-1/(2n+1)}$ as $t \to \infty$. Thus, it follows that $|\lambda_0^{(n,j)}| \leq C t^{-1/(2n+1)}$ for some $C>0$. For $n=3$ and $\xi=1$, an illustration of the six critical points and signature of $\re \Theta$ are shown in Figure \ref{criticalpointsfig}.

\begin{figure}
\begin{center}
\begin{overpic}[width=0.5\textwidth]{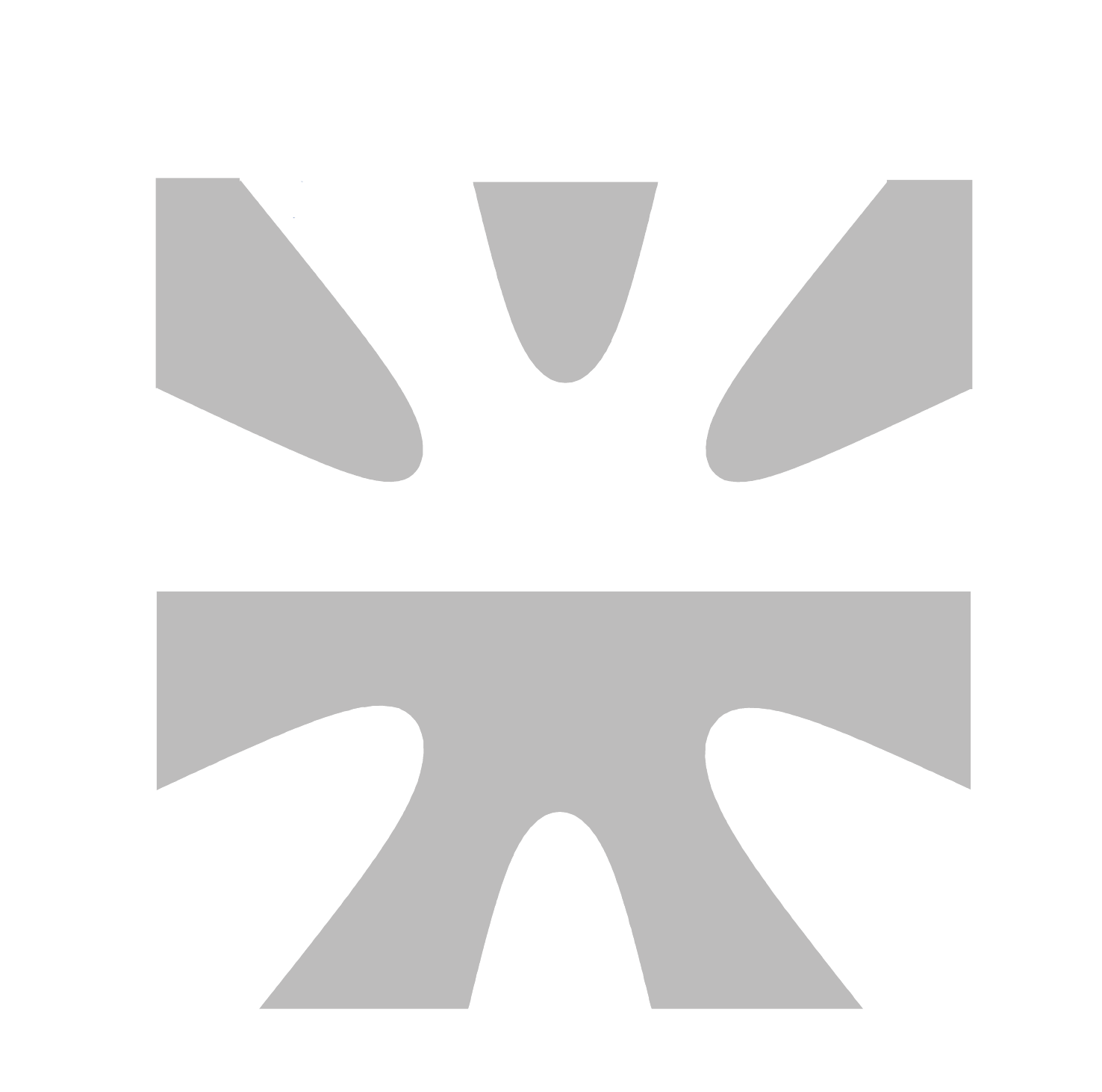}
      \put(92,45){\small $\R$}
      \put(50,30){\circle*{2}}
      \put(50,61){\circle*{2}}
      \put(39,54){\circle*{2}}
       \put(39,37){\circle*{2}}
        \put(61,37){\circle*{2}}
        \put(61,54){\circle*{2}}

      \put(42,51){\tiny $\re \Theta < 0$}
      \put(42,40){\tiny $\re \Theta > 0$}
      \put(72,67){\tiny $\re \Theta > 0$}
      \put(72,21){\tiny $\re \Theta < 0$}
       \put(44,80){\tiny $\re \Theta > 0$}
      \put(43,8){\tiny $\re \Theta <0$}
       \put(16,67){\tiny $\re \Theta > 0$}
      \put(16,21){\tiny $\re \Theta <0$}
    \end{overpic}
     \caption{\label{criticalpointsfig}
        The critical points $\lambda^{(3,j)}_0$, $j=1,\ldots,6$, and signature of $\re \Theta$ in the complex $\lambda$-plane for $\xi=1$ in {\bf Case I}.}
      \end{center}
\end{figure}

\subsection{First transformation: $m \to m^{(1)}$}\label{sec:firstranI}
The first transformation involves deformation of the jump matrix along the real axis and we need to decompose the reflection coefficient $r$ into an analytic part $r_a$ and a small remainder $r_r$. To proceed, recall the rays $\Upsilon_j$, $j=1,2n+1,2n+2,4n+2$, defined in \eqref{def:Upsilonj} with the orientations shown in Figure \ref{fig:jumps-Psi}. By reversing the orientations of $\Upsilon_{j}$, $j=2n+1,2n+2$, we obtain two new rays denoted by $\Upsilon_{j}^*$. Clearly, the three curves $\Upsilon_{1}\cup\Upsilon_{2n+1}^*$, $\mathbb{R}$ and $\Upsilon_{2n+2}^*\cup\Upsilon_{4n+1}$ formulate the boundaries of two open subsets $V$ and $V^*$, as shown in Figure \ref{Gamma1geq.pdf}. The decomposition of $r$ is then given in the following lemma. Since the proof is similar to that of \cite[Lemma 2.1]{CL-2019} or \cite[Lemma 7.1]{hl-2018}, we omit the details here.

\begin{figure}
\begin{center}
\begin{overpic}[width=.5\textwidth]{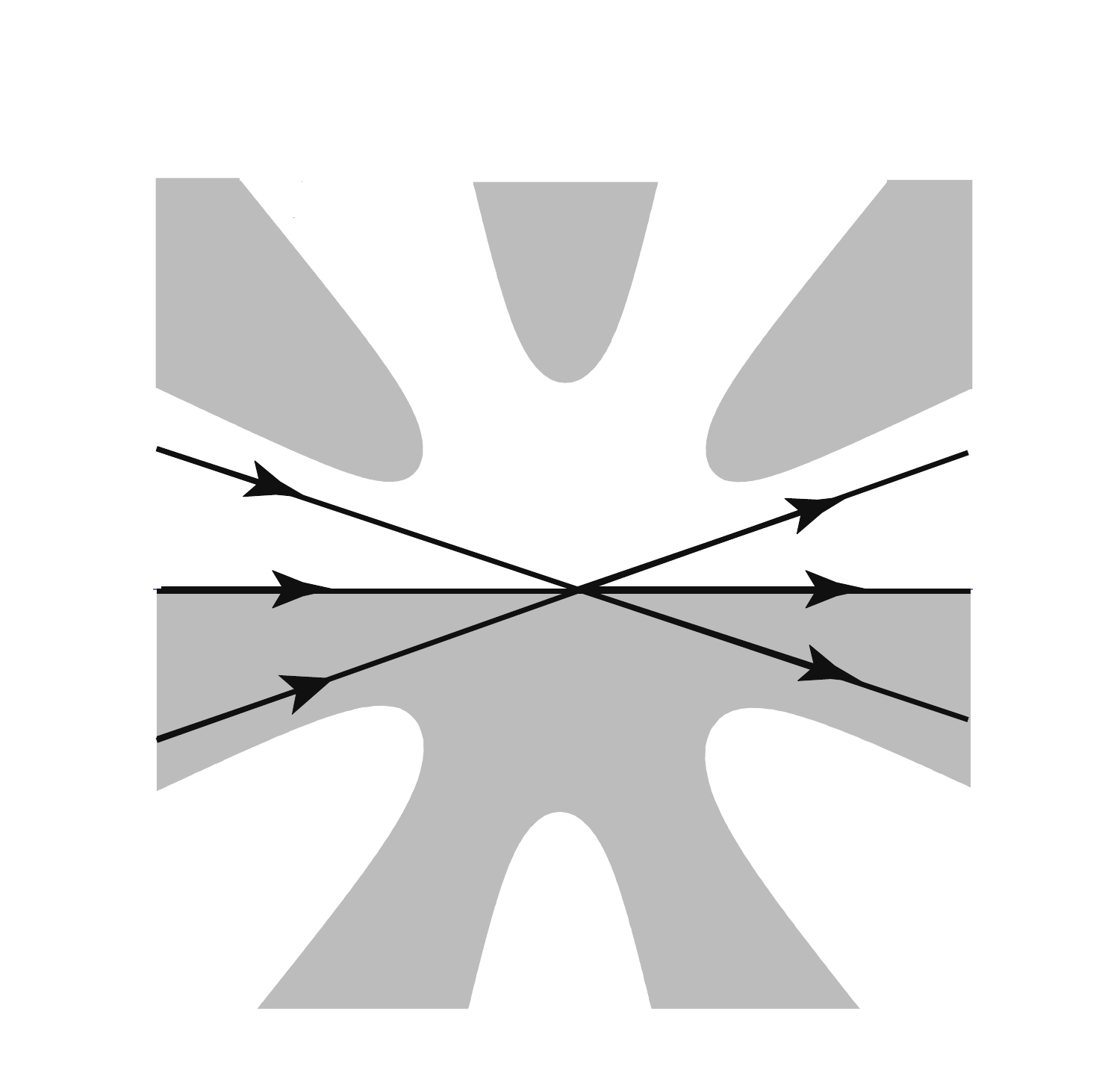}
      \put(50,30){\circle*{2}}
      \put(50,61){\circle*{2}}
      \put(52.5,45.5){\circle*{3}}
      \put(39,54){\circle*{2}}
       \put(39,37){\circle*{2}}
        \put(61,37){\circle*{2}}
        \put(61,54){\circle*{2}}

      \put(42,51){\tiny $\re \Theta < 0$}
      \put(42,38){\tiny $\re \Theta > 0$}
      \put(72,67){\tiny $\re \Theta > 0$}
      \put(72,21){\tiny $\re \Theta < 0$}
       \put(44,80){\tiny $\re \Theta > 0$}
      \put(43,8){\tiny $\re \Theta <0$}
       \put(16,67){\tiny $\re \Theta > 0$}
      \put(16,21){\tiny $\re \Theta <0$}
 \put(5,57){\tiny $\Upsilon_{2n+1}^*$}
 \put(5,30){\tiny $\Upsilon_{2n+2}^*$}
  \put(90,58){\tiny $\Upsilon_{1}$}
 \put(90,31){\tiny $\Upsilon_{4n+2}$}
      \put(20,50){\tiny $V$}
      \put(80,50){\tiny $V$}
       \put(20,38){\tiny $V^*$}
      \put(80,38){\tiny $V^*$}
    \end{overpic}
    \caption{\label{Gamma1geq.pdf}
        The open subsets $V$, $V^*$ in {\bf Case I} and the contour $\Gamma^{(1)}$ for the RH problem \ref{RHm1} for $m^{(1)}$.}
     \end{center}
\end{figure}

\begin{lemma}[Analytic approximation for $\xi \geq 0$]\label{decompositionlemmageq}
Let $r$ be the reflection coefficient defined in \eqref{def:r}, we have
\begin{equation}\label{def:decom}
 r(\lambda) = r_{a}(x,t, \lambda) + r_{r}(x, t, \lambda), \qquad t \geq C(n), \quad \lambda \in \R,
\end{equation}
where $C(n)$ is a positive constant depending on $n$, and for a fixed $N\in\N$ the functions $r_{a}$ and $r_{r}$ have the following properties:
\begin{enumerate}[label=\emph{(\alph*)}, ref=(\alph*)]
\item For each $t\geq C(n)$, $r_{a}(t, \lambda)$ is defined and continuous for $\lambda \in \bar{V}$ and analytic for $\lambda \in V$, where $\bar{V}$ denotes the closure of $V$.

\item There exists a constant $C>0$ such that
\begin{align*}
& |r_{a}(x, t, \lambda)| \leq \frac{C}{1 + |\lambda|} e^{\frac{t}{4}|\re \Theta(\xi,\lambda)|}, \qquad
  \lambda \in \bar{V},
\end{align*}
and
\begin{align}\label{raat1IV}
\bigg|r_{a}(x, t, \lambda) - \sum_{j=0}^N \frac{r^{(j)}(0)}{j!} \lambda^j\bigg| \leq C |\lambda|^{N+1} e^{\frac{t}{4}|\re \Theta(\xi,\lambda)|}, \qquad \lambda \in \bar{V},
\end{align}
for each $\xi \geq 0$ and $t \geq C(n)$.
\item As $t \to \infty$, the $L^1$ and $L^\infty$ norms of $r_{r}(x, t, \cdot)$ on $\R$ are $\Boh(t^{-N})$.

\item The functions $r_a$ and $r_{r}$ satisfy the following symmetries:
\begin{align*}
\begin{cases}
r_{a}(x, t, \lambda) = -r_{a}^*(x, t, -\lambda), &\lambda \in \bar{V},\\
r_{r}(x, t, \lambda) = -r_{r}^*(x, t, -\lambda), & \lambda \in \R.
\end{cases}
\end{align*}
\end{enumerate}
\end{lemma}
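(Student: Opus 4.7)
My plan is to follow the Deift--Zhou analytic approximation strategy, adapted to the $(2n+1)$-order phase $\Theta(\xi,\lambda)$ of the mKdV hierarchy. The template, going back to \cite[Lemma 2.1]{CL-2019} and \cite[Lemma 7.1]{hl-2018}, has two ingredients: peel off the Taylor data of $r$ at $\lambda = 0$ using a rational interpolant $h$ that is analytic in $\bar V$, and then Fourier-split the Schwartz-class remainder $R := r - h$ to separate an analytic extension into $V$ from a small $L^1 \cap L^\infty$ tail.

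Concretely, I take
$$h(\lambda) = \frac{1}{(1+i\lambda)^{N+2}} \sum_{j=0}^{N} c_j \lambda^j,$$
with the coefficients $c_j$ tuned so that $h$ matches the Taylor polynomial $P_N(\lambda) = \sum_{j=0}^N \frac{r^{(j)}(0)}{j!} \lambda^j$ at the origin to order $N$. Then $h$ is analytic in $\overline{\{\im \lambda > -1\}} \supset \bar V$ and decays like $|\lambda|^{-2}$ at infinity, while $R = r - h \in \mathcal{S}(\R)$ vanishes to order $N+1$ at $\lambda = 0$ (using that $u_0 \in \mathcal{S}(\R)$ implies $r \in \mathcal{S}(\R)$). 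For the Fourier split, I write $R(\lambda) = \int_\R \widehat R(y) e^{2i\lambda y} \ud y$ and partition the integration at $y = -y_0(t)$ for a cutoff $y_0(t) > 0$ to be calibrated. The integral over $[-y_0(t), \infty)$ extends analytically into $V$ because the opening angles of $\Upsilon_1$ and $\Upsilon_{2n+1}^*$ guarantee $\im \lambda \geq -c|\lambda|$ on $V$ for some $c > 0$, so that $|e^{2i\lambda y}| \leq e^{2cy_0(t)|\lambda|}$ on the relevant range of $y$. The complementary tail over $(-\infty, -y_0(t))$ defines $r_r$, whose $L^1$ and $L^\infty$ norms are controlled by Schwartz tails of $\widehat R$ and thus decay faster than any power of $y_0(t)$. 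Setting $r_a = h + (\text{analytic Fourier piece})$ delivers a function analytic on $V$ with the Taylor matching at $0$ built in, and the bound \eqref{raat1IV} then follows from the fact that $h - P_N$ vanishes to order $N+1$ at $0$ combined with the explicit exponential control on the Fourier piece.

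The main obstacle is the joint calibration of $y_0(t)$ against the phase $\re\Theta(\xi,\lambda)$, which scales as $t|\lambda|^{2n+1}$ away from the real axis. The Fourier-extension bound $e^{2cy_0(t)|\lambda|}$ must stay below $e^{\frac{t}{4}|\re\Theta|}$ throughout $\bar V$ (capping $y_0(t)$ from above), while the Schwartz tail of $\widehat R$ must give $r_r = \Boh(t^{-N})$ in $L^1 \cap L^\infty(\R)$ (forcing $y_0(t)$ from below). Since the phase is polynomial of degree $2n+1$ in $\lambda$, this balance yields $y_0(t)$ as a positive fractional power of $t$ tuned to $N$ and $n$; the higher-order character of the phase, compared to the cubic phase of the mKdV equation treated in \cite{CL-2019}, is the principal point at which the argument has to be adapted. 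Finally, the symmetries $r_a(x,t,\lambda) = -r_a^*(x,t,-\lambda)$ and $r_r(x,t,\lambda) = -r_r^*(x,t,-\lambda)$ are enforced by replacing $r_a(\lambda)$ with $\tfrac12(r_a(\lambda) - r_a^*(-\lambda))$ and analogously for $r_r$; analyticity on $V$, the pointwise bounds, the Taylor-matching inequality \eqref{raat1IV}, and the $L^1 \cap L^\infty$ decay are all preserved under this symmetrization, since the symmetry $r(\lambda) = -r^*(-\lambda)$ guarantees that the symmetrized pieces reproduce the same sum $r_a + r_r = r$ on $\R$.
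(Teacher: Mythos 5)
Your plan reproduces the standard Deift--Zhou decomposition that the authors themselves point to in \cite{CL-2019} and \cite{hl-2018}: peel off Taylor data at the origin with a rational interpolant $h$, Fourier-split the smooth remainder at a $t$-dependent cutoff $y_0(t)$, balance $y_0$ against the phase, and finally symmetrize. That is the intended route, and your symmetrization step at the end is exactly right (it preserves the sum $r_a + r_r = r$ because of $r(\lambda) = -r^*(-\lambda)$, and $V$ is invariant under $\lambda\mapsto -\bar\lambda$). So the architecture is sound; the gaps are in the quantitative heart of the argument.

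The first issue is your bound $|e^{2i\lambda y}| \leq e^{2cy_0(t)|\lambda|}$. This is too lossy. The exact bound is $|e^{2i\lambda y}| \leq e^{2y_0\,\im\lambda}$ for $y\ge -y_0$, and the factor $\im\lambda$ (not $|\lambda|$) is what makes the comparison with $e^{\frac{t}{4}|\re\Theta|}$ work: for $\lambda = s e^{i\psi}\in V$ with $\xi \geq 0$ one has $\im\lambda = s\sin\psi$ and $|\re\Theta(\xi,\lambda)| \geq 2^{2n+1}s^{2n+1}\sin((2n+1)\psi)$, and \emph{both} quantities degenerate as $\psi\to 0$, so the ratio $\frac{|\re\Theta|}{\im\lambda}$ stays under control on the whole wedge. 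If instead you replace $\im\lambda$ by $|\lambda| = s$, the ratio $\frac{|\re\Theta|}{|\lambda|}$ goes to $0$ as $\psi\to 0$ (for $\xi = 0$), which would force $y_0 = 0$ and kill the tail estimate on $r_r$. With the sharp bound, maximize $2y_0 s\sin\psi - \frac{t}{4}\cdot 2^{2n+1}s^{2n+1}\sin((2n+1)\psi)$ over $s>0$ and $\psi\in(0,\tfrac{\pi}{4n+2})$; the maximum is of order $y_0^{(2n+1)/(2n)}t^{-1/(2n)}$, attained near the boundary ray, so the correct calibration is $y_0(t) = a\,t^{1/(2n+1)}$ with $a>0$ sufficiently small. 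Note this exponent depends only on $n$, not on $N$; the dependence on $N$ enters only through the polynomial-decay rate $K$ you demand of $\widehat R$, since $\|r_r\| = O(y_0^{-K}) = O(t^{-K/(2n+1)})$ requires $K \geq N(2n+1)$. Your phrasing ``a positive fractional power of $t$ tuned to $N$ and $n$'' should be sharpened accordingly.

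The second issue is that your claimed path to \eqref{raat1IV} is incomplete. You write $R = r - h$ vanishing to order $N+1$ at $\lambda=0$, then Fourier-split $R = R_a + R_r$ and assert the bound follows. But the truncated integral $R_a(\lambda) = \int_{-y_0}^{\infty}\widehat R(y)e^{2i\lambda y}\,dy$ does \emph{not} vanish to order $N+1$ at the origin: the moment identities $\int_\R \widehat R(y) y^j\,dy = 0$ for $j\le N$ only become $\int_{-y_0}^\infty \widehat R(y)y^j\,dy = O(y_0^{-K})$ after truncation, so $R_a^{(j)}(0) = O(y_0^{-K})\neq 0$. Hence $r_a - P_N$ has a small but nonzero value at $\lambda=0$, which violates the $|\lambda|^{N+1}$ bound there. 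You need to absorb these residual moments into $r_a$ via a further rational correction of the same form as $h$, with coefficients $O(y_0^{-K})$, chosen so that $r_a - P_N$ genuinely vanishes to order $N+1$; the correction is harmless for the $L^1\cap L^\infty$ bound on $r_r$ because its coefficients are $O(y_0^{-K})$. Relatedly, the $L^1(\R)$ estimate for $r_r$ does not come for free from $\|\widehat R\mathbb{1}_{(-\infty,-y_0)}\|_{L^1}$, since the sharp cutoff produces $O(1/\lambda)$ decay in $\lambda$; integrate by parts twice (using the rapid decay of $\widehat R$ and $\widehat R'$) to get $r_r(\lambda) = O(y_0^{-K}(1+|\lambda|)^{-2})$, which is $L^1$. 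These are the same refinements that appear in \cite[Lemma 2.1]{CL-2019} and \cite[Lemma 7.1]{hl-2018}, and your proposal should make them explicit to close the argument.
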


With the aid of decomposition \eqref{def:decom}, we define a sectionally analytic matrix-valued function $m^{(1)}$ by
\begin{align}\label{m1def}
m ^{(1)}(x,t,\lambda) = m(x,t,\lambda)G(x,t,\lambda),
\end{align}
where
\begin{align}\label{GdefII}
G(x,t,\lambda): = \begin{cases}
\begin{pmatrix}
 1 & 0 \\
 -r_{a} e^{t\Theta}  & 1
\end{pmatrix}, & \lambda \in V,
	\\
\begin{pmatrix}
 1 & - r_{a}^* e^{-t\Theta}  \\
0 & 1
\end{pmatrix}, & \lambda \in V^*,
	\\
I, & \text{elsewhere}.
\end{cases}
\end{align}
By Lemma \ref{decompositionlemmageq} and the notation $E^p(D)$ introduced at the end of Section \ref{notation}, it is readily seen that
$$G(x,t,\cdot) \in I + (\dot{E}^2 \cap E^\infty)(V \cup V^*),$$
and $m$ satisfies the RH problem \ref{rhp-org} if and only if $m^{(1)}$ defined in \eqref{m1def} solves the following RH problem.
\begin{rhp}\label{RHm1}
\hfill
\begin{enumerate}[label=\emph{(\alph*)}, ref=(\alph*)]
\item $m^{(1)}(x, t, \cdot) \in I + \dot{E}^2(\C \setminus \Gamma^{(1)})$,
where
\begin{equation}\label{def:gamma1}
\Gamma^{(1)} := \R \cup \Upsilon_1\cup\Upsilon_{2n+1}^*\cup\Upsilon_{2n+2}^*\cup\Upsilon_{4n+2};
\end{equation}
see Figure \ref{Gamma1geq.pdf} for an illustration and the orientation.

\item For a.e. $\lambda \in \Gamma^{(1)}$, we have
\begin{equation*}
m^{(1)}_+(x,t,\lambda) = m^{(1)}_-(x, t, \lambda) v^{(1)}(x, t, \lambda),
\end{equation*}
where
\begin{align}\label{v1def}
 v^{(1)}(x, t, \lambda) = \begin{cases}
\begin{pmatrix}
 1 & 0 \\
 r_{a} e^{t\Theta}  & 1
\end{pmatrix}, & \lambda \in \Upsilon_1\cup\Upsilon_{2n+1}^*,
	\\
\begin{pmatrix}
 1 & - r_{a}^* e^{-t\Theta}  \\
 0  & 1
\end{pmatrix}, & \lambda \in \Upsilon_{2n+2}^*\cup\Upsilon_{4n+2},
	\\
\begin{pmatrix}
 1 - |r_{r}|^2 & - r_{r}^* e^{-t\Theta}  \\
 r_{r} e^{t\Theta}  & 1
\end{pmatrix}, & \lambda \in \R.
\end{cases}
\end{align}
\end{enumerate}
\end{rhp}

\subsection{Local parametrix near the origin}\label{sec:m0I}
As $t \to \infty$, it is easily seen from the signature of $\re \Theta$ (c.f. Figure \ref{criticalpointsfig}) and Item (c) of Lemma \ref{decompositionlemmageq} that $v^{(1)}$ in \eqref{v1def} tends to the identity matrix. The convergence, however, is not uniform for $\lambda \in \Gamma^{(1)} \setminus \R$ and close to the origin, which means we have to construct a local parametrix in a small neighborhood of the origin $D(0,\epsilon)$ with $\epsilon>0$ being small and fixed.

To formulate the local parametrix, we make a local change of variable for $\lambda$ near origin and introduce two new variables $y$ and $z$ by
\begin{align}\label{yzdefIV}
y := ((2n+1)t)^{-\frac{1}{2n+1}}x, \qquad z := ((2n+1)t)^{{\frac{1}{2n+1}}}\lambda,
\end{align}
so that
\begin{align*}
t\Theta(\xi, \lambda) = 2i\left(y z + \frac{(2z)^{2n+1}}{4n+2}\right).
\end{align*}
Since $0\leq {x}\leq M t^{1/(2n+1)}$, we have that $y$ is bounded and the map $\lambda \mapsto z$ maps $D(0,\epsilon)$ onto the open disk $D(0,((2n+1)t)^{{\frac{1}{2n+1}}}\epsilon)$ in the complex $z$-plane. The idea now is, in view of Item (b) of Lemma \ref{decompositionlemmageq}, to replace the analytic part $r_a$ of $r$ in \eqref{v1def} by its $N$-th order polynomial approximation
\begin{align}\label{pNdefIV}
  p_N(t,z): = \sum_{j=0}^N \frac{r^{(j)}(0)}{j!} \lambda^j = \sum_{j=0}^N \frac{r^{(j)}(0)}{j!(2n+1)^{\frac{j}{2n+1}}} \frac{z^j}{t^{\frac{j}{2n+1}}}.
\end{align}
and approximate $v^{(1)}$ by
\begin{align}\label{def:v0}
v_0(x, t, \lambda):=\left\{
                     \begin{array}{ll}
                        \begin{pmatrix}
 1	& 0 \\
p_N(t, z)e^{t\Theta}  & 1
\end{pmatrix}, & \hbox{$\lambda \in D(0,\epsilon)\cap \{\Upsilon_1\cup\Upsilon_{2n+1}^*\},$} \\
                       \begin{pmatrix} 1 & -p_N^*(t, z)e^{-t\Theta}	\\
0	& 1
\end{pmatrix}, & \hbox{$\lambda \in D(0,\epsilon)\cap \{\Upsilon_{2n+2}^*\cup\Upsilon_{4n+2}\}$.}
                     \end{array}
                   \right.
\end{align}
Indeed, for $T>0$ large enough, we define
$${\bf Case~~I^{T}} \doteq {\bf Case~~I} \cap \{t \geq T \}, $$
and the following estimate holds.
\begin{lemma}\label{lem:estv0}
For $(x,t) \in {\bf Case~~I^{T}}$, we have
\begin{align}\label{v2v0estimateIV}
 \|v^{(1)} - v_0\|_{L^p(D(0,\epsilon)\cap\Gamma^{(1)} \setminus \R)} \leq Ct^{-\frac{N+1}{2n+1}},
\end{align}	
for each $1\leq p\leq \infty$ and some $C>0$, where $v^{(1)}$ and  $v_0$ are defined in \eqref{v1def} and \eqref{def:v0}, respectively.
\end{lemma}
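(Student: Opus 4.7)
The plan is to obtain a pointwise bound for $v^{(1)} - v_0$ on each component of $D(0,\epsilon)\cap\Gamma^{(1)} \setminus \R$ and then integrate. On the upper rays $\Upsilon_1\cup\Upsilon_{2n+1}^*$ one has $v^{(1)} - v_0 = (r_a - p_N)e^{t\Theta} E_{21}$, and on the lower rays $\Upsilon_{2n+2}^*\cup\Upsilon_{4n+2}$ the difference is the Schwartz conjugate, with $E_{21}$ the appropriate elementary matrix. So the whole estimate reduces to controlling $|(r_a - p_N)e^{t\Theta}|$ on, say, $\Upsilon_1\cup\Upsilon_{2n+1}^*$. By Item (b) of Lemma \ref{decompositionlemmageq} (specifically \eqref{raat1IV}) and the identity $p_N(t,z) = \sum_{j=0}^N \tfrac{r^{(j)}(0)}{j!}\lambda^j$ that comes from substituting $z = ((2n+1)t)^{1/(2n+1)}\lambda$ in \eqref{pNdefIV}, we have
\begin{equation*}
|r_a(x,t,\lambda) - p_N(t,z)| \leq C|\lambda|^{N+1}\,e^{\tfrac{t}{4}|\re\Theta(\xi,\lambda)|}.
\end{equation*}

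The next step is to verify that the phase is genuinely decaying on the deformed rays near the origin. Parametrizing $\lambda = se^{i\pi/(4n+2)} \in \Upsilon_1$ and using $n$ odd and $\xi \geq 0$ in \textbf{Case I}, a direct computation from \eqref{def:Thetaxi} yields
\begin{equation*}
\re\Theta(\xi,\lambda) = -2\xi s\sin\!\tfrac{\pi}{4n+2} - 2^{2n+1} s^{2n+1} \leq -2^{2n+1}s^{2n+1},
\end{equation*}
and the analogous computation on $\Upsilon_{2n+1}^*$ (where $\lambda^{2n+1} = is^{2n+1}$) gives the same upper bound. Combining with the previous display,
\begin{equation*}
\bigl|(r_a - p_N)\,e^{t\Theta}\bigr|(\lambda) \leq C|\lambda|^{N+1}e^{\tfrac{3t}{4}\re\Theta(\xi,\lambda)} \leq C|\lambda|^{N+1} e^{-c\, t\,|\lambda|^{2n+1}}, \qquad \lambda \in \Upsilon_1\cup\Upsilon_{2n+1}^*,
\end{equation*}
for some $c>0$ independent of $(x,t) \in \textbf{Case I}^T$. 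The same bound, by the symmetry in Item (d) of Lemma \ref{decompositionlemmageq}, holds for $|(r_a^* - p_N^*)e^{-t\Theta}|$ on $\Upsilon_{2n+2}^*\cup\Upsilon_{4n+2}$.

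The final step is the $L^p$ computation via scaling. For $p = \infty$, maximizing $s^{N+1}e^{-cts^{2n+1}}$ at $s_\ast = (cons/t)^{1/(2n+1)}$ gives the peak value $C t^{-(N+1)/(2n+1)}$. For $1\leq p<\infty$, the substitution $s = u\,t^{-1/(2n+1)}$ yields
\begin{equation*}
\int_0^\epsilon s^{p(N+1)} e^{-pcts^{2n+1}}\,ds = t^{-\tfrac{p(N+1)+1}{2n+1}} \int_0^{\epsilon t^{1/(2n+1)}} u^{p(N+1)} e^{-pc\,u^{2n+1}}\,du \leq C\, t^{-\tfrac{p(N+1)+1}{2n+1}},
\end{equation*}
and taking the $p$-th root gives a bound of order $t^{-(N+1)/(2n+1)-1/((2n+1)p)} \leq Ct^{-(N+1)/(2n+1)}$. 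Summing over the four rays yields \eqref{v2v0estimateIV}.

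I do not foresee any serious obstacle; the main thing to be careful about is the sign of $\re\Theta$ on the deformed rays, which is why the rays $\Upsilon_1,\Upsilon_{2n+1}^*,\Upsilon_{2n+2}^*,\Upsilon_{4n+2}$ (rather than the original real axis) are used in the first transformation, and which is why the assumption $\xi \geq 0$ together with $n$ odd is crucial in \textbf{Case I} (so that the linear contribution $-2(-1)^{n+1}\xi s\sin\tfrac{\pi}{4n+2}$ does not fight the dominant $-s^{2n+1}$ term). Once that sign check is done, the rest is a standard Laplace-type scaling.
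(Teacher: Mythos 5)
Your proof is correct and follows essentially the same route as the paper's: invoke \eqref{raat1IV} to bound $|r_a - p_N|$, check the phase decay $\re\Theta \leq -2^{2n+1}|\lambda|^{2n+1}$ on the deformed rays using $n$ odd and $\xi \geq 0$, and close with a Laplace-type scaling estimate. The only cosmetic difference is that you compute the $L^p$ norm uniformly for all $p$ via the substitution $s = u\,t^{-1/(2n+1)}$, whereas the paper computes $L^1$ and $L^\infty$ separately; both yield the same conclusion.
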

\begin{proof}
It is readily seen from \eqref{v1def} and \eqref{def:v0} that
\begin{align}\label{v2minusv0IV}
& v^{(1)} - v_0 = \begin{cases}
   \begin{pmatrix}
 0 & 0  \\
 (r_a(x, t,\lambda) - p_N(t,z)) e^{t\Theta}   &  0
  \end{pmatrix}, & \lambda \in D(0,\epsilon)\cap \{\Upsilon_1\cup\Upsilon_{2n+1}^*\},
  	\\
 \begin{pmatrix}
 0 & -(r_a^*(x, t,\lambda) - p_N^*(t, z)) e^{-t\Theta} \\
0 & 0
  \end{pmatrix}, & \lambda \in D(0,\epsilon)\cap \{\Upsilon_{2n+2}^*\cup\Upsilon_{4n+2}\}.
\end{cases}
\end{align}
We will only prove \eqref{v2v0estimateIV} for $\lambda \in D(0,\epsilon)\cap \Upsilon_1$, since the proofs for the other contours follow from similar arguments.
Let $\lambda \in D(0,\epsilon)\cap \Upsilon_1$ and $ (x,t) \in {\bf Case~~I^{T}}$, for $\xi \geq 0$, we have from \eqref{def:Thetaxi} that
\begin{multline}\label{PhionY1IV}
\re \Theta(\xi, \lambda)=\re \Theta(\xi, |\lambda| e^{\frac{\pi i}{4n+2}})
\\
= -2^{2n+1}  |\lambda|^{2n+1} - 2\xi  |\lambda| \sin\left(\frac{\pi}{4n+2}\right)
\leq -2^{2n+1}|\lambda|^{2n+1}.
\end{multline}
In particular,
\begin{align*}
e^{-\frac{3t}{4} |\re \Theta|}  \leq Ce^{-\frac{3}{4}t 2^{2n+1}|\lambda|^{2n+1}} \leq C e^{-{2^{2n-1}}|z|^{2n+1}}, \qquad \lambda \in D(0,\epsilon)\cap \Upsilon_1,
\end{align*}
for some $C>0$.
Thus, by (\ref{v2minusv0IV}), (\ref{raat1IV}) and (\ref{pNdefIV}), it follows that
\begin{align*}\nonumber
|v^{(1)} - v_0|
& \leq C|r_a(t,\lambda) - p_N(t,z)| e^{t \re \Theta}
\leq C|z t^{-\frac{1}{2n+1}}|^{N+1} e^{-\frac{3t}{4} |\re \Theta|}
	\\
& \leq C|zt^{-\frac{1}{2n+1}}|^{N+1} e^{-{2^{2n-1}}|z|^{2n+1}}, \qquad \lambda \in D(0,\epsilon) \cap \Upsilon_1.
\end{align*}
Consequently, writing $\varrho = |z|$,
\begin{align*}
& \|v^{(1)} - v_0\|_{L^\infty(D(0,\epsilon)\cap \Upsilon_1)}
 \leq C\sup_{0 \leq \varrho < \infty} (\varrho t^{-\frac{1}{2n+1}})^{N+1} e^{-{2^{2n-1}} \rho^{2n+1}}
\leq C t^{-\frac{N+1}{2n+1}}
\end{align*}
and
\begin{align*}
& \|v^{(1)} - v_0\|_{L^1(D(0,\epsilon)\cap \Upsilon_1)}
 \leq C\int_0^{\infty} (\varrho t^{-\frac{1}{2n+1}})^{N+1} e^{-{2^{2n-1}} \varrho^{2n+1}} \frac{d\varrho}{t^{\frac{1}{2n+1}}}
\leq C t^{-\frac{N+2}{2n+1}},
\end{align*}
which gives us \eqref{v2v0estimateIV}.
\end{proof}

In the virtue of Lemma \ref{lem:estv0}, we are then led to consider the following local parametrix.
\begin{rhp}\label{rhp:m0}
\hfill
\begin{enumerate}[label=\emph{(\alph*)}, ref=(\alph*)]
\item $m_0(x, t, \lambda)$ is analytic for $D(0,\epsilon)\cap\Gamma^{(1)} \setminus \R$.

\item For a.e. $\lambda \in D(0,\epsilon)\cap\Gamma^{(1)} \setminus \R$, we have
\begin{equation*}
m_{0,+}(x,t,\lambda) = m_{0,-}(x, t, \lambda) v_0(x, t, \lambda),
\end{equation*}
where $ v_0(x, t, \lambda)$ is defined in \eqref{def:v0}.

\item For $\lambda \in \partial D(0,\epsilon)$, we have $m_{0}(x,t,\lambda) \to I$ as $t \to \infty$.
\end{enumerate}
\end{rhp}

We can solve the above RH problem by using the solution of model RH problem introduced in Section \ref{sec:A2}. Let $m^{\Upsilon^*}(y,t,z)$ be the solution of RH problem \ref{RHmYIV} with the polynomial \eqref{psumIV} given by \eqref{pNdefIV}, i.e., the parameters $s$ and $p_j$, $j=1,\ldots,N$, therein are chosen to be
\begin{align}\label{sp1p2}
s = r(0) \in i \R, \qquad p_j = \frac{r^{(j)}(0)}{j!(2n+1)^{\frac{j}{2n+1}}}.
\end{align}
Due to the symmetry of reflection coefficient in \eqref{rsymm}, the polynomial $p_N(t,z)$ satisfies the symmetry relation
$$p_N(t,z) = -\overline{p_N(t,-\bar{z})},$$
which implies that $p_1\in \R$ and $p_2\in i\R$. We then define
\begin{align}\label{m0defIV}
m_0(x, t,\lambda) = m^{\Upsilon^*}(y, t, z), \qquad \lambda \in D(0,\epsilon),
\end{align}
where $y$ and $z$ are defined in \eqref{yzdefIV}. By Lemma \ref{YlemmaIV}, it follows that we can choose $T \geq C(n)$ such that $m_0$ in \eqref{m0defIV} is well-defined whenever $(x,t) \in {\bf Case~~I^{T }}$ and indeed solves the RH problem \ref{rhp:m0}. More properties of $m_0$ are collected in the following lemma for later use.

\begin{lemma}\label{m0lemmaIV}
For each $(x,t) \in {\bf Case~~I^{T }}$, we have
\begin{align}\label{m0boundIV}
|m_0(x,t,\lambda) | \leq C, \qquad  \lambda \in D(0,\epsilon)\cap\Gamma^{(1)} \setminus \R,
\end{align}
for some $C>0$. Moreover,
\begin{align}\label{m0invexpansionIV}
 m_0(x,t,\lambda)^{-1}  = I +  \sum_{j=1}^N \sum_{l=0}^N \frac{\Phi^{(0)}_{jl}(y)}{\lambda^j t^{\frac{j+l}{2n+1}}} + \Boh (t^{-\frac{N+1}{2n+1}})
\end{align}
uniformly for $(x,t) \in {\bf Case~~I^{T }}$ and $\lambda \in \partial D(0,\epsilon)$, where the coefficients $\Phi^{(0)}_{jl}(y)$ are smooth functions of $y \in [0,\infty)$. In particular,
\begin{equation}\label{m0LinftyestimateIV}
||m_0(x,t,\cdot)^{-1} - I||_{L^{\infty}(\partial D(0,\epsilon))}=\Boh (t^{-\frac{1}{2n+1}}),
\end{equation}
and
\begin{align}\label{m0circleIV}
& \frac{1}{2\pi i}\int_{\partial D(0,\epsilon)}(m_0(x,t,\lambda)^{-1} - I) d\lambda
= \sum_{l=1}^N \frac{g_l(y)}{t^{\frac{l}{2n+1}}} + \Boh (t^{-\frac{N+1}{2n+1}}),
\end{align}	
uniformly for $(x,t) \in {\bf Case~~I^{T }}$, where $g_{l+1}(y) = \Phi^{(0)}_{1l}(y)$ for each $l \geq 0$ and
\begin{align}\label{g1explicitIV}
(g_1)_{12}(y) = (g_1)_{21}(y)= -
\frac{(2n+1)^{-\frac{1}{2n+1}}}{2} q_{AS,n}(y;ir(0))
\end{align}
with $q_{\AS,n}$ being the generalized Ablowitz-Segur solution of the Painlev\'{e} II hierarchy \eqref{def:PIIhierar}.
If $r(0) = 0$, we have
\begin{align}\label{eq:g1}
g_1(y) &= 0,
	\\
g_2(y) &=  -\frac{r'(0)}{4\times (2n+1)^{\frac{2}{2n+1}}} \Ai'_{2n+1}(y)\sigma_1,
	\\ \label{g1g2g3explicit}
g_3(y) &=  \frac{i r'(0)^2}{8 \times (2n+1)^{\frac{3}{2n+1}}} \left(\int_y^{\infty} (\Ai'_{2n+1}(y'))^2dy'\right) \sigma_3
 +  \frac{ir''(0)}{16 \times  (2n+1)^{\frac{3}{2n+1}}} \Ai^{''}_{2n+1}(y) \sigma_1.
\end{align}
\end{lemma}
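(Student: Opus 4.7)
My plan is to transfer the analysis to the model problem $m^{\Upsilon^*}$ from Section~\ref{sec:A2} via the rescaling \eqref{yzdefIV}. On $\partial D(0,\epsilon)$ the corresponding variable $z$ satisfies $|z| = \epsilon((2n+1)t)^{1/(2n+1)} \to \infty$, so the large-$z$ asymptotics of $m^{\Upsilon^*}$ is available. Boundedness \eqref{m0boundIV} then follows directly from Lemma~\ref{YlemmaIV} once one notes that $y$ remains in a bounded set throughout $\mathbf{Case~~I^T}$.

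For \eqref{m0invexpansionIV} I would first expand $m^{\Upsilon^*}(y,t,z) = I + \sum_{j=1}^{N} M_j(y;p_1,\dots,p_N)/z^j + \Boh(z^{-N-1})$ uniformly in $z$ on the relevant circle, with $M_j$ smooth in $y$ and analytic in the parameters $p_k$. Taylor expanding $M_j$ in $(p_1,\dots,p_N)$ and substituting $p_k = c_k\, t^{-k/(2n+1)}$ with the constants $c_k$ of \eqref{sp1p2}, regrouping by total powers of $t^{-1/(2n+1)}$, and replacing $z$ by $((2n+1)t)^{1/(2n+1)}\lambda$ produces the claimed double sum. Inverting this expansion via a Neumann series preserves the same structure for $m_0^{-1} - I$; \eqref{m0LinftyestimateIV} is the leading term. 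The integral formula \eqref{m0circleIV} with $g_{l+1}(y) = \Phi^{(0)}_{1l}(y)$ then follows from the elementary residue computation $\int_{\partial D(0,\epsilon)} \lambda^{-j}\, d\lambda = 2\pi i\,\delta_{j,1}$ under the counterclockwise orientation.

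The identification \eqref{g1explicitIV} comes from the observation that at leading order in $t^{-1/(2n+1)}$ (setting $p_k \equiv 0$) the jump data of $m^{\Upsilon^*}$ reduce to those determined solely by $s = r(0) = i\rho$ with $\rho = -ir(0) \in (-1,1)$, which, after a trivial rescaling of $z$, coincides with RH problem~\ref{rhp:psi} for $\Psi(y;\rho,\cdot)$ with Stokes data \eqref{eq:SMs}. Matching \eqref{eq:asyPsi} with the large-$z$ expansion of $m^{\Upsilon^*}$ and invoking the identity \eqref{eq:qPsi} then pins down $\Phi^{(0)}_{1,0}(y)$; its off-diagonal, symmetric form is forced by the RH symmetry \eqref{msymm} together with $r(0) \in i\R$, while the prefactor $(2n+1)^{-1/(2n+1)}/2$ comes from the scaling between $z$ and $\zeta$.

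When $r(0) = 0$, $s$ vanishes and the jumps of $m^{\Upsilon^*}$ are driven entirely by $p_N(t,z)$, each being of size $\Boh(t^{-1/(2n+1)})$ on the jump contour. A convergent Neumann/Cauchy-integral series for $m^{\Upsilon^*}$ then gives an explicit expansion in successive powers of $t^{-1/(2n+1)}$. The vanishing $g_1 = 0$ is immediate from $q_{\AS,n}(\cdot;0) \equiv 0$ (Remark~\ref{rk:PII}). The formula for $g_2$ comes from the first Cauchy iterate (linear in $p_1$): contour deformation along $\Upsilon_1 \cup \Upsilon_{2n+1}^* \cup \Upsilon_{2n+2}^* \cup \Upsilon_{4n+2}$ reduces it to the generalized-Airy integral \eqref{def:kAiry2}, so that the coefficient of $1/z$ is proportional to $p_1 \Ai'_{2n+1}(y)\sigma_1$. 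The formula for $g_3$ combines the linear-in-$p_2$ term (yielding the $\Ai''_{2n+1}(y)\sigma_1$ piece) with the quadratic-in-$p_1$ second iterate; its off-diagonal pieces cancel by symmetry, while the diagonal $\sigma_3$ part becomes a double Airy integral that evaluates, after an integration by parts, to $\int_y^\infty (\Ai'_{2n+1}(y'))^2\,dy'$. The main obstacle lies in Step~3/Step~4: the careful bookkeeping of signs, scaling constants, and the parity-dependent choice in \eqref{eq:qPsi}, together with verifying the precise cancellations producing the $\sigma_3$ and $\sigma_1$ splits in $g_3$.
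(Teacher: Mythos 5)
Your proposal follows essentially the same route as the paper: rescale to the model problem $m^{\Upsilon^*}$ via \eqref{yzdefIV}, invoke the uniform boundedness and large-$z$ asymptotics from Lemma~\ref{YlemmaIV}, extract the $\lambda^{-1}$ coefficient by Cauchy's formula, and identify the leading coefficient through the Painlev\'e~II RH problem and the generalized Airy integrals. The paper simply keeps the $t$-dependence explicit inside the polynomial $p_N(t,z)$ and cites the double expansion \eqref{mYasymptoticsIV} directly (which bundles the error term $\Boh(t^{-(N+1)/(2n+1)}/|z| + |z|^{-N-1})$ into a single uniform statement), whereas you propose re-deriving it by Taylor-expanding in the polynomial coefficients; that is an equivalent bookkeeping device, though you should be careful that on $\partial D(0,\epsilon)$ the combinations $p_j z^j t^{-j/(2n+1)}$ are $\Boh(1)$ rather than small — the effective smallness in the Neumann series comes from the $t^{-j/(2n+1)}$ prefactors in $\hat{w}^{\Upsilon^*}$ together with the exponential damping of the jump, as in the proof of Lemma~\ref{YlemmaIV} — and that the symmetry forcing the off-diagonal form of $g_1$ is \eqref{mYsymmIV} rather than \eqref{msymm}.
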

\begin{proof}
The proof essentially follows from Lemma \ref{YlemmaIV}. By \eqref{mYboundedIV}, we obtain the bound (\ref{m0boundIV}).
To show \eqref{m0invexpansionIV}, we note that if $|\lambda| = \epsilon$, the variable $z=((2n+1)t)^{{\frac{1}{2n+1}}}\lambda$ satisfies $|z| = ((2n+1)t)^{{\frac{1}{2n+1}}}\epsilon$ and $z\to \infty$ if $t\to \infty$.
Thus equation (\ref{mYasymptoticsIV}) yields
\begin{align*}
  m_0(x,t,\lambda) = I + \sum_{j=1}^N \sum_{l=0}^N \frac{\Phi_{jl}^{\Upsilon^*}(y)}{(((2n+1)t)^{{\frac{1}{2n+1}}}\lambda )^j t^{\frac{l}{2n+1}}}
 + \Boh \big(t^{-\frac{N+1}{2n+1}}\big),
\end{align*}
uniformly for $(x,t) \in {\bf Case~~I^{T }}$ and $\lambda \in \partial D(0,\epsilon)$. It is then straightforward to check that the expansion (\ref{m0invexpansionIV}) exists where the coefficients $\Phi^{(0)}_{jl}(y)$ can be expressed in terms of $\Phi_{jl}^{\Upsilon^*}(y)$. In particular, we have $\Phi^{(0)}_{10}(y) = - (2n+1)^{-\frac{1}{2n+1}}\Phi_{10}^{\Upsilon^*}(y)$. By (\ref{m0invexpansionIV}), the estimate (\ref{m0LinftyestimateIV}) holds and we obtain from the Cauchy's formula that
\begin{align*}
\frac{1}{2\pi i}\int_{\partial D(0,\epsilon)}(m_0^{-1} - I) d\lambda
= \sum_{l=0}^{N} \frac{\Phi^{(0)}_{1l}(y)}{t^{\frac{l+1}{2n+1}}}
+ \Boh (t^{-\frac{N+2}{2n+1}}),
\end{align*}
which is \eqref{m0circleIV}.

Finally, note that
$$g_1(y) = \Phi_{10}^{(0)}(y) = - (2n+1)^{-\frac{1}{2n+1}} \Phi_{10}^{\Upsilon^*}(y),$$
we then obtain from \eqref{m10Yexplicit} and \eqref{eq:qsym} that
\begin{align*}
(g_1)_{12}(y) = (g_1)_{21}(y)=\frac{(2n+1)^{-\frac{1}{2n+1}}}{2} q_{AS,n}(y;-ir(0))=-\frac{(2n+1)^{-\frac{1}{2n+1}}}{2} q_{AS,n}(y;ir(0)),
\end{align*}
which is \eqref{g1explicitIV}. If $r(0) = 0$, a combination of the fact that
$$g_{l+1}(y) = \Phi^{(0)}_{1l}(y)=-(2n+1)^{-\frac{1}{2n+1}} \Phi_{1l}^{\Upsilon^*}(y), \quad l=0,1,2,$$
\eqref{sp1p2} and \eqref{eq:phi10}--\eqref{eq:phi12} gives us \eqref{eq:g1}--\eqref{g1g2g3explicit}.
\end{proof}

\subsection{Final transformation}
The final transformation is defined by
\begin{align}\label{mhatdef}
\hat{m}(x,t,\lambda) = \begin{cases}
m^{(1)}(x, t, \lambda)m_0(x,t,\lambda)^{-1}, & \lambda \in D(0,\epsilon),\\
m^{(1)}(x, t, \lambda), & \lambda \in \C \setminus D(0,\epsilon),
\end{cases}
\end{align}
and it is readily seen that $\hat{m}$ satisfies the following RH problem.
\begin{rhp}\label{RHmhat}
\hfill
\begin{enumerate}[label=\emph{(\alph*)}, ref=(\alph*)]
\item $\hat{m}(x, t, \cdot) \in I + \dot{E}^2(\C \setminus \hat{\Gamma})$, where $\hat{\Gamma} := \Gamma^{(1)} \cup \partial D(0,\epsilon)$ with $\Gamma^{(1)}$ defined in \eqref{def:gamma1}; see Figure \ref{Gammahatgeq.pdf} for an illustration.
\item For a.e. $\lambda \in \hat{\Gamma}$, we have
\begin{equation}\label{eq:hatmjump}
\hat{m}_+(x,t,\lambda) = \hat{m}_-(x, t, \lambda) \hat{v}(x, t, \lambda),
\end{equation}
where
\begin{align*}
\hat{v}
=
\begin{cases}
m_{0,-} v^{(1)} m_{0,+}^{-1}, & \lambda \in \hat{\Gamma} \cap D(0,\epsilon), \\
m_0^{-1}, & \lambda \in \partial D(0,\epsilon), \\
v^{(1)},  & \lambda \in \hat{\Gamma} \setminus \overline{D(0,\epsilon)},
\end{cases}
\end{align*}
and where $v^{(1)}$ is defined in \eqref{v1def}.
\end{enumerate}
\end{rhp}

\begin{figure}
\begin{center}
\begin{overpic}[width=.5\textwidth]{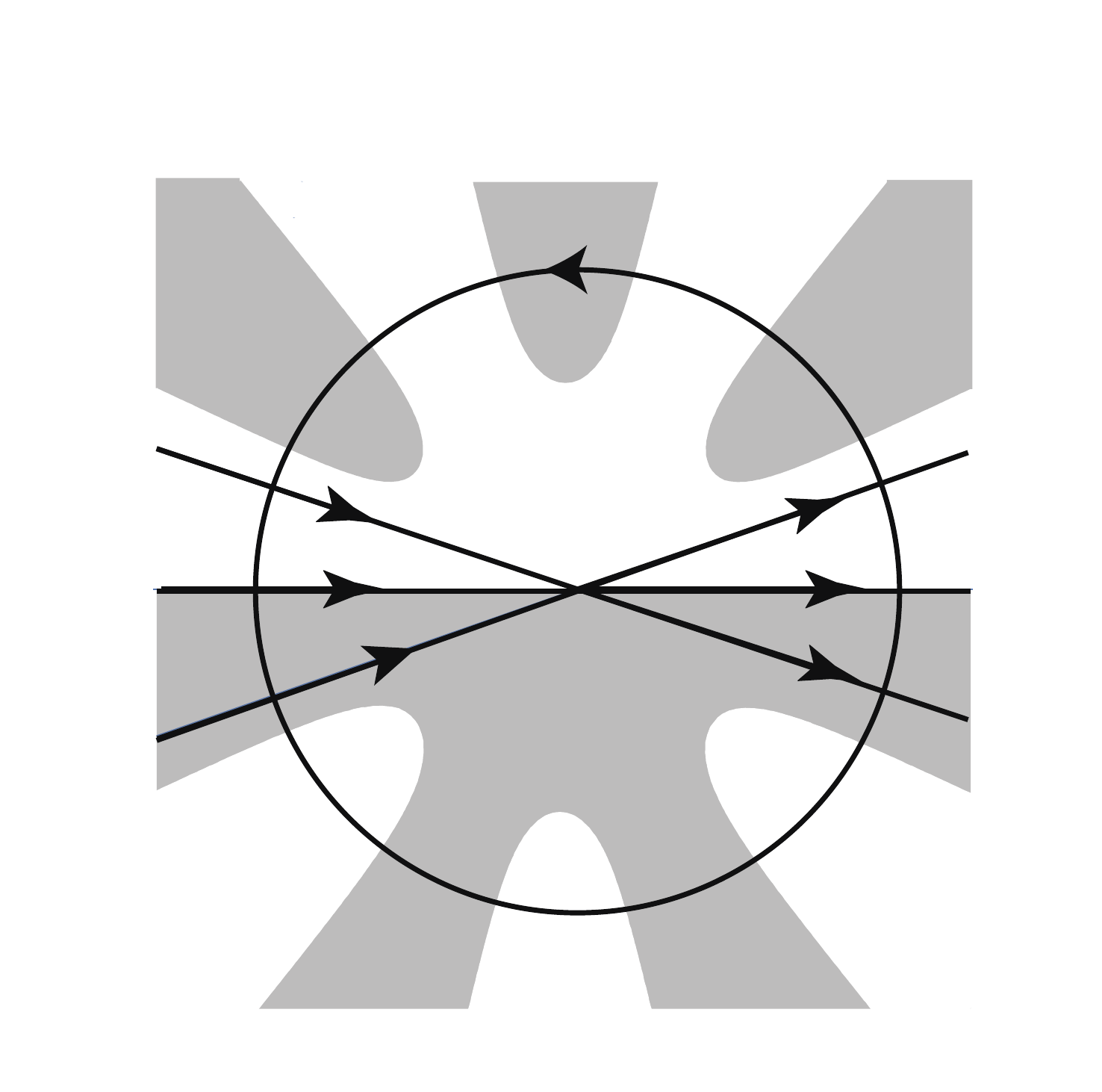}
      \put(99,48){\small $\hat{\Gamma}$}
      \put(83,46.5){\small $\epsilon$}
      \put(15.5,46.5){\small $-\epsilon$}
    \end{overpic}
\caption{\label{Gammahatgeq.pdf}
        The contour $\hat{\Gamma}$ for the RH problem \ref{RHmhat} for $\hat{m}$.
}
     \end{center}
\end{figure}

The jump matrix $\hat v$ in \eqref{eq:hatmjump} tends to the identity matrix as $t\to \infty$. More precisely,
let
\begin{equation}\label{def:hatw}
\hat{w} := \hat{v} - I,
\end{equation} we have the following estimates.
\begin{lemma}\label{wlemmaIV}
For each $1 \leq p \leq \infty$, the following estimates hold uniformly for $(x,t) \in {\bf Case~~I^{T }}$:
\begin{subequations}\label{westimateIV}
\begin{align}\label{westimateIVa}
& \|\hat{w}\|_{L^p(\partial D(0,\epsilon))} \leq Ct^{-\frac{1}{2n+1}},
	\\\label{westimateIVb}
& \|\hat{w}\|_{L^p(D(0,\epsilon)\cap\Gamma^{(1)} \setminus \R)} \leq Ct^{-\frac{N+1}{2n+1}},
	\\ \label{westimateIVc}
& \|\hat{w}\|_{L^p(\R)} \leq C t^{-N},
	\\\label{westimateIVd}
& \|\hat{w}\|_{L^p(\Gamma^{(1)} \setminus \{\R\cup \overline{D(0,\epsilon)}\})} \leq Ce^{-ct},
\end{align}
for some positive constants $c$ and $C$.
\end{subequations}
\end{lemma}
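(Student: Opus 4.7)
The plan is to bound $\hat{w}=\hat{v}-I$ separately on the four constituent pieces of $\hat{\Gamma}$, using the matching estimate of $m_0$ on the circle, the local approximation of Lemma \ref{lem:estv0} on the interior rays, the smallness of the remainder $r_r$ from Lemma \ref{decompositionlemmageq}(c) on $\R$, and the exponential decay of $e^{\pm t\Theta}$ on the exterior rays. In every case I control the $L^1$ and $L^\infty$ endpoints directly, and the intermediate $L^p$ bounds for $1<p<\infty$ follow from the elementary inequality $\|f\|_{L^p}\le \|f\|_{L^1}^{1/p}\|f\|_{L^\infty}^{1-1/p}$ (or, on contours of finite length, the trivial $\|f\|_{L^p}\le \ell^{1/p}\|f\|_{L^\infty}$).

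For \eqref{westimateIVa}, on $\partial D(0,\epsilon)$ the definition of $\hat v$ reduces to $\hat w=m_0^{-1}-I$, so \eqref{m0LinftyestimateIV} yields the $L^\infty$ bound of order $t^{-1/(2n+1)}$, from which the $L^p$ bounds follow by finite length. For \eqref{westimateIVb}, on the rays inside the disc the jump relation $m_{0,+}=m_{0,-}v_0$ allows one to rewrite
\[
\hat w=m_{0,-}v^{(1)}m_{0,+}^{-1}-m_{0,-}v_0 m_{0,+}^{-1}=m_{0,-}\bigl(v^{(1)}-v_0\bigr)m_{0,+}^{-1},
\]
and the uniform bound \eqref{m0boundIV} on $m_0$ (and hence on $m_0^{-1}$, since $\det m_0\equiv 1$) reduces the estimate directly to \eqref{v2v0estimateIV} of Lemma \ref{lem:estv0}.

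For \eqref{westimateIVc}, on $\R$ the jump is $\hat v=v^{(1)}$ with $|e^{\pm t\Theta}|=1$, hence $|\hat w|\le C(|r_r|+|r_r|^2)$ pointwise. Item (c) of Lemma \ref{decompositionlemmageq} yields the $L^1$ and $L^\infty$ bounds of order $t^{-N}$; the quadratic term contributes $\||r_r|^2\|_{L^p}=\|r_r\|_{L^{2p}}^2$, which by the same interpolation is even smaller. Finally, for \eqref{westimateIVd} on the exterior rays, the crux is that $\re\Theta$ is bounded away from zero: on $\Upsilon_1$ with $|\lambda|\ge\epsilon$, estimate \eqref{PhionY1IV} gives $\re\Theta(\xi,\lambda)\le -2^{2n+1}|\lambda|^{2n+1}$ uniformly for $\xi\ge0$, and analogous signed bounds hold on $\Upsilon_{2n+1}^*$, $\Upsilon_{2n+2}^*$ and $\Upsilon_{4n+2}$ by the same elementary trigonometric calculation, since the two terms $(-1)^{n+1}\xi\lambda$ and $2^{2n}\lambda^{2n+1}$ contribute with compatible signs on each ray. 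Combining with $|r_a|\le(C/(1+|\lambda|))e^{t|\re\Theta|/4}$ from Lemma \ref{decompositionlemmageq}(b) gives
\[
\bigl|r_a(x,t,\lambda)\,e^{t\Theta(\xi,\lambda)}\bigr|\le \frac{C}{1+|\lambda|}\,e^{-\frac{3t}{4}|\re\Theta|},
\]
which is $O(e^{-ct})$ uniformly in $L^1$ and $L^\infty$ on each ray, thanks to the super-exponential decay in $|\lambda|$; the analogous estimate for $r_a^*e^{-t\Theta}$ on the lower rays completes the argument.

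The main obstacle is the uniform sign control of $\re\Theta$ on each of the four exterior rays for all $\xi\ge 0$; once this is verified as in \eqref{PhionY1IV}, the rest of the proof is a mechanical assembly of the three cited input lemmas.
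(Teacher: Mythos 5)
Your proposal is correct and follows essentially the same decomposition as the paper's proof: estimate \eqref{westimateIVa} from the matching bound \eqref{m0LinftyestimateIV}, estimate \eqref{westimateIVb} from the factorization $\hat w=m_{0,-}(v^{(1)}-v_0)m_{0,+}^{-1}$ together with \eqref{v2v0estimateIV} and \eqref{m0boundIV}, estimate \eqref{westimateIVc} from Lemma~\ref{decompositionlemmageq}(c), and estimate \eqref{westimateIVd} from the sign of $\re\Theta$ on the exterior rays.

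One small imprecision in your treatment of \eqref{westimateIVc}: you write that on $\R$ the jump is $\hat v=v^{(1)}$, but this is only true on $\R\setminus\overline{D(0,\epsilon)}$. On $\R\cap D(0,\epsilon)$ the definition of $\hat v$ in RH problem~\ref{RHmhat} gives $\hat v=m_{0,-}v^{(1)}m_{0,+}^{-1}$; since $m_0$ has no jump across $\R$ inside the disc this equals $m_0 v^{(1)} m_0^{-1}$, hence $\hat w=m_0\bigl(v^{(1)}-I\bigr)m_0^{-1}$ there, and one must again invoke the uniform boundedness \eqref{m0boundIV} of $m_0$ (and of $m_0^{-1}$, as $\det m_0\equiv 1$) to reduce to the bound on $v^{(1)}-I$. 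The paper's proof explicitly cites the boundedness of $m_0$ for exactly this reason. The conclusion $\|\hat w\|_{L^p(\R)}\le Ct^{-N}$ is unaffected, so this is a slip in the write-up rather than a gap in the argument. The remaining parts — the $L^1/L^\infty$ endpoints with interpolation for intermediate $p$, the uniform sign control of $\re\Theta$ on the four exterior rays for $\xi\ge 0$, and the use of $|r_a|\le C(1+|\lambda|)^{-1}e^{t|\re\Theta|/4}$ to absorb the growth of $r_a$ — are exactly the ingredients the paper uses, just spelled out in more detail.
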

\begin{proof}
The estimate (\ref{westimateIVa}) follows from (\ref{m0LinftyestimateIV}).
For $\lambda \in D(0,\epsilon)\cap\Gamma^{(1)} \setminus \R$, we have
$$\hat{w} = m_{0,-} (v^{(1)} - v_0) m_{0,+}^{-1}.$$
This, together with \eqref{v2v0estimateIV} and \eqref{m0boundIV}, gives us \eqref{westimateIVb}.
According to Item (c) of Lemma \ref{decompositionlemmageq} and the boundedness \eqref{m0boundIV} of $m_0$, one has \eqref{westimateIVc}. Finally, \eqref{westimateIVd} follows because $e^{-t|\re \Theta|} \leq Ce^{-ct}$ uniformly on $\Gamma^{(1)} \setminus \{\R\cup \overline{D(0,\epsilon)}\}$.
\end{proof}

\section{Asymptotic analysis of the RH problem for $m$: {\bf Case II}}
\label{proof:th2th3-2}
In this section, we analyse the RH problem \ref{rhp-org} for $m$ as $t\to \infty$ in {\bf Case~ II}, that is,
\begin{align*}
\{(x,t) \mid 0\leq{x}\leq M t^{\frac{1}{2n+1}}\}, &\qquad n \text{ is even},
\end{align*}
where $M > 0$. In this case, the critical points of phase function $\Theta$ in \eqref{def:Thetaxi} are
$$\lambda^{(n,j)}_0 = \sqrt[2n]{\frac{\xi}{2^{2n}(2n+1)}} e^{j\frac{\pi i}{n}},\qquad j=1,2,\cdots,2n, $$
which approach $0$ at least as fast as $t^{-1/(2n+1)}$ as $t \to \infty$. Thus, it follows that $|\lambda^{(n,j)}_0| \leq C t^{-1/2n+1}$ for some $C>0$. The difference between the present case and {\bf Case I} lies in the fact that there are two critical points lying on the real line, which we denote  by
\begin{equation}\label{def:lambda0}
\pm \lambda_0:=\pm \sqrt[2n]{\frac{\xi}{2^{2n}(2n+1) }}.
\end{equation} For $n=2$ and $\xi=1$, an illustration of the four critical points and signature of $\re \Theta$ are shown in Figure \ref{Gamma1leq.pdf}.

During our analysis performed in what follows, the same notations $m^{(1)},V,\ldots$, will be used to emphasize the analogies with the previous section, and we believe this will not cause any confusion.

\begin{figure}
\begin{center}
\begin{overpic}[width=.5\textwidth]{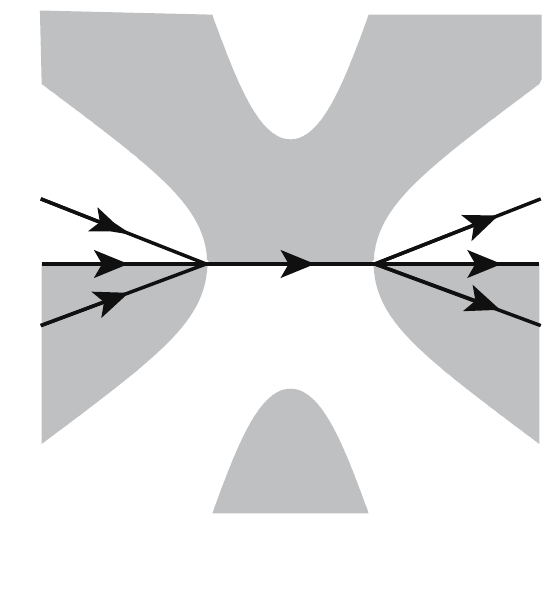}
      \put(95,75){\small $\Gamma^{(1)}$}
      \put(84,70){\small $\Upsilon_{1,\lambda_0}$}
      \put(1,70){\small $\Upsilon_{2n+1,\lambda_0}^*$}
      \put(84,40){\small $\Upsilon_{4n+2,\lambda_0}$}
      \put(1,40){\small $\Upsilon_{2n+2,\lambda_0}^*$}
      \put(10,58){\small $V$}
      \put(10,48){\small $V^*$}
      \put(85,58){\small $V$}
      \put(85,48){\small $V^*$}
      \put(74,88){\tiny $\re \Theta > 0$}
        \put(49,41.3){\circle*{2}}
       \put(49,69.3){\circle*{2}}
       \put(34.8,55.3){\circle*{2}}
      \put(31,50.3){\small $-\lambda_0$}
       \put(63,55.3){\circle*{2}}
      \put(60,50.3){\small $\lambda_0$}
    \end{overpic}
\caption{\label{Gamma1leq.pdf}
       The contour $\Gamma^{(1)}$ for the RH problem \ref{rhp:m1II} for $m^{(1)}$ and the sets $V$ and $V^*$ in ${\bf Case~~II}$. The region where $\re \Theta > 0$ is shaded and the dark points stand for the critical points $\lambda_{0}^{(2,j)}$, $j=1,\ldots,4$, for $\xi=1$.
     }
     \end{center}
\end{figure}

\subsection{First transformation: $m \to m^{(1)}$}
As in Section \ref{sec:firstranI}, we need to decompose the reflection coefficient $r$ into an analytic part and a small remainder to define the first transformation. In this case, we define
\begin{equation*}
\Upsilon_{i,\lambda_0}:=\lambda_0+\Upsilon_i, \quad i=1,4n+2, \qquad \Upsilon_{i,\lambda_0}^*:=-\lambda_0+\Upsilon_i^*,  \quad i=2n+1,2n+2.
\end{equation*}
These four rays, together with $(-\infty,-\lambda_0]\cup [\lambda_0,\infty)$, formulate the boundaries of open subsets $V \equiv V(\xi)$ and $V^* \equiv V^*(\xi)$, as illustrated in Figure \ref{Gamma1leq.pdf}.
Similar to \cite[Lemma 5.7]{CL-2019}, \cite[Lemma 5.2]{hl-2019} and \cite[Lemma 7.1]{hl-2018}, the decomposition of $r$ in {\bf Case II} reads as follows and we again omit the proof.

\begin{lemma}[Analytic approximation for $0\leq \xi \leq A$]\label{decompositionlemmaleq}
Let $r$ be the reflection coefficient defined in \eqref{def:r}, we have
\begin{equation}\label{eq:decomII}
r(\lambda) = r_{a}(x, t, \lambda) + r_{r}(x, t, \lambda), \qquad \lambda \in (-\infty, -\lambda_0)\cup (\lambda_0, \infty),
\end{equation}
and for a fixed $N\in \N$ the functions $r_{a}$ and $r_{r}$ have the following properties:
\begin{enumerate}[label=\emph{(\alph*)}, ref=(\alph*)]
\item For each $\xi \in [0, A]$ and $t\geq C(n)$ with positive $A$ and $C(n)$, $r_{a}(x, t,\lambda)$ is defined and continuous for $\lambda \in \bar{V}$ and analytic for $\lambda \in V$.

\item There exists a constant $C>0$ such that
\begin{align*}
& |r_{a}(x, t, \lambda)| \leq \frac{C}{1 + |\lambda|} e^{\frac{t}{4}|\re \Theta(\xi,\lambda)|}, \qquad
  \lambda \in \bar{V},
\end{align*}
and
\begin{equation}\label{raatk0}
\bigg|r_{a}(x, t, \lambda) - \sum_{j=0}^N \frac{r^{(j)}(\lambda_0)}{j!} (\lambda-\lambda_0)^j\bigg| \leq C |\lambda-\lambda_0|^{N+1} e^{\frac{t}{4}|\re \Theta(\xi,\lambda)|}, \qquad \lambda \in \bar{V},
\end{equation}
uniformly for $\xi \in [0, A]$ and $t \geq C(n)$.

\item The $L^1$ and $L^\infty$ norms of $r_{r}(x, t, \cdot)$ on $(-\infty, -\lambda_0)\cup (\lambda_0, \infty)$ are $\Boh(t^{-N})$ as $t \to \infty$ uniformly with respect to $\xi \in [0, A]$.

\item The functions $r_a$ and $r_{r}$ satisfy the following symmetries:
\begin{align*}
\begin{cases}
r_{a}(x,t, \lambda) = -r_{a}^*(x,t, -\lambda), &\lambda \in \bar{V},\\
r_{r}(x,t, \lambda) = -r_{r}^*(x,t, -\lambda), & \lambda \in (-\infty,-\lambda_0)\cup (\lambda_0,\infty).
\end{cases}
\end{align*}

\end{enumerate}
\end{lemma}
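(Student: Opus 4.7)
The strategy is the standard Deift--Zhou type analytic/rational decomposition, adapted from \cite[Lemma 5.7]{CL-2019} and \cite[Lemma 7.1]{hl-2018}; I sketch the key steps without computing the numerical constants. By the symmetry relation $r(\lambda)=-\overline{r(-\lambda)}$ of the reflection coefficient \eqref{rsymm}, once I produce a decomposition $r=r_a+r_r$ on $[\lambda_0,\infty)$ satisfying (a)--(c) with $r_a$ extending analytically into the right-hand component of $V$, I obtain the decomposition on $(-\infty,-\lambda_0]$ via $r_{a/r}(x,t,\lambda):=-\overline{r_{a/r}(x,t,-\bar\lambda)}$, which automatically enforces condition (d). So the whole proof reduces to working near $\lambda_0$.

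The first step is to subtract the Taylor expansion. Since $u_0\in\mathcal{S}(\R)$ implies $r\in\mathcal{S}(\R)$, Taylor's theorem gives a decomposition
\[
r(\lambda)=\sum_{j=0}^{N}\frac{r^{(j)}(\lambda_0)}{j!}(\lambda-\lambda_0)^{j}+(\lambda-\lambda_0)^{N+1}q(\lambda),\qquad \lambda\in[\lambda_0,\infty),
\]
with $q\in C^\infty$ of rapid decay at $+\infty$ (and bounded near $\lambda_0$). The polynomial piece is entire and, multiplied by a rational cut-off such as $((\lambda_0+i)/(\lambda+i))^{N+2}$ to control growth and integrability at infinity, contributes to $r_a$ with no further work. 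Condition (b), in particular the estimate \eqref{raatk0}, is then satisfied by this polynomial piece on the nose. The burden of the proof is therefore to decompose the remainder $(\lambda-\lambda_0)^{N+1}q(\lambda)$.

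The second step is a Fourier splitting. Extend $q$ to a Schwartz function on $\R$ and write $q(\lambda)=\int_{\R}\widehat q(s)e^{is\lambda}\,ds$. Split the Fourier integral as $q=q_++q_-$ where $q_\pm(\lambda)=\int_{\R_\pm}\widehat q(s)e^{is\lambda}\,ds$. By construction $q_+$ extends analytically to the upper half-plane and $q_-$ to the lower half-plane, and each is bounded by $|q_\pm(\lambda)|\le\int_0^\infty|\widehat q(\pm s)|e^{-s|\mathrm{Im}\,\lambda|}\,ds$, which gives exponentially growing bounds in the wrong half-plane and uniform bounds in the right half-plane. Since the relevant component of $V$ opens into the upper half-plane along $\Upsilon_{1,\lambda_0}$, I define
\[
r_a(x,t,\lambda):=\text{(polynomial part)}+(\lambda-\lambda_0)^{N+1}q_+(\lambda),\qquad r_r:=r-r_a,
\]
so $r_r=(\lambda-\lambda_0)^{N+1}q_-$ on $(\lambda_0,\infty)$. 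This gives (a) for free, and the factor $(\lambda-\lambda_0)^{N+1}$ in front of $q_+$ produces the desired order-$(N+1)$ vanishing in \eqref{raatk0}.

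The main technical point, which is also the key obstacle, is step three: deriving the bound $|r_a|\le\frac{C}{1+|\lambda|}e^{\frac{t}{4}|\mathrm{Re}\,\Theta|}$ uniformly for $\xi\in[0,A]$. On $\bar V$, $\mathrm{Re}\,\Theta(\xi,\lambda)$ has definite sign and polynomial growth of order $|\lambda|^{2n+1}$, while the bound from the Fourier representation is only $|q_+(\lambda)|\lesssim e^{s_0|\mathrm{Im}\,\lambda|}$ for the essential Fourier support $|s|\lesssim s_0$. To reconcile these, I subdivide $\widehat q$ using a $t$-dependent Littlewood--Paley cut-off: the low-frequency part $|s|\le ct^{\frac{2n}{2n+1}}$ is placed in $r_a$ and handled by a direct comparison with $|\mathrm{Re}\,\Theta|$ on the rays $\Upsilon_{i,\lambda_0}$ (which is possible because $\Upsilon_{1,\lambda_0}$ is tuned so that along it $\mathrm{Re}\,\Theta\asymp-|\lambda-\lambda_0|^{2n+1}$ and also $\mathrm{Re}\,\Theta\asymp -|\lambda|\cdot|\mathrm{Im}\,\lambda|$ plus lower order, controlling $e^{s|\mathrm{Im}\,\lambda|}$ by $e^{\frac{t}{4}|\mathrm{Re}\,\Theta|}$ when $s\le ct^{2n/(2n+1)}$); the high-frequency part is placed in $r_r$ and, by repeated integration by parts using the Schwartz decay of $q$ (hence of $\widehat q$), contributes $O(t^{-N})$ in both $L^1$ and $L^\infty$, yielding (c). A standard diagonal argument then combines the bounds to achieve (b) with the factor $1/(1+|\lambda|)$ coming from the rational cut-off introduced in step one. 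Finally, (d) is enforced by symmetrizing the construction as described above.
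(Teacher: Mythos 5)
Your decomposition $r_a = (\text{polynomial part}) + (\lambda-\lambda_0)^{N+1}q_+$ with $q_\pm$ the positive/negative-frequency parts of the Fourier transform of $q$ in the variable $\lambda$ does not give property (c): with this split, $r_r = (\lambda-\lambda_0)^{N+1}q_-$ on $(\lambda_0,\infty)$, and $q_-$ is a fixed, $t$-independent function whose $L^1$ and $L^\infty$ norms are generically $O(1)$, not $O(t^{-N})$. Your ``$t$-dependent Littlewood--Paley cutoff'' does not repair this. If you place both signs of low frequency into $r_a$, you destroy the analytic extension needed for (a); if you only truncate $q_+$ and leave $q_-$ intact in $r_r$, the residual still fails to decay; and if you truncate at $|s|\le ct^{2n/(2n+1)}$ inside the domain of analyticity, the bound $|q_+(\lambda)|\lesssim e^{s|\im\lambda|}$ cannot be controlled by $e^{\frac{t}{4}|\re\Theta|}$ near $\lambda_0$, where $|\re\Theta|\asymp|\lambda-\lambda_0|^{2n+1}$ is smaller than $|\im\lambda|\asymp|\lambda-\lambda_0|$, so the estimate in (b) is not uniform.

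The missing idea, which is the core of the Deift--Zhou analytic approximation (and of the referenced \cite[Lemma 5.7]{CL-2019} and \cite[Lemma 7.1]{hl-2018}), is a change of variables adapted to the phase: on $(\lambda_0,\infty)$ introduce $\phi(\lambda)$ with $\phi'\propto -i\,\partial_\lambda\Theta(\xi,\lambda)$ so that $e^{t\Theta}$ becomes $e^{2it\phi}$, push the remainder $f=r-P_N$ (after weighting by the rational cutoff) through this map, extend $F(\phi)=f(\lambda(\phi))$ to a compactly supported $C^m$-function of $\phi\in\R$ with $m$ chosen according to $N$, and Fourier transform in $\phi$, not in $\lambda$. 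The split at $s=\pm t/4$ (not a fixed or polynomially growing frequency scale) then does double duty: the analytic piece $\int_{s\le t/4}\widehat F(s)e^{is\phi}\,ds$ extends to the sector where $\im\phi$ has the right sign and the factor $e^{\frac{t}{4}|\im\phi|}$ is absorbed by $e^{t\Theta/2}=e^{it\phi}$, yielding the bound in (b), while the residual $\int_{s>t/4}\widehat F(s)e^{is\phi}\,ds$ is $O(t^{-N})$ because $\widehat F(s)=O(s^{-m})$ by the smoothness of $F$. Without the phase change of variable there is no mechanism for $r_r$ to be small as $t\to\infty$ nor for the exponential weight to match, and both (b) and (c) fail. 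Your steps one (Taylor subtraction, rational cutoff) and four (symmetrization via $r(\lambda)=-\overline{r(-\lambda)}$) are correct and standard; it is the middle of the argument that needs to be redone along the lines above.
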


We now define $m^{(1)}$ by \eqref{m1def} with $G(x,t,\lambda)$ given by \eqref{GdefII} and emphasize that the function $r_a$ therein appears in the decomposition \eqref{eq:decomII}. By Lemma \ref{decompositionlemmaleq}, one has
$$G(x,t,\cdot) \in I + (\dot{E}^2 \cap E^\infty)(V \cup V^*),$$
and $m$ satisfies the RH problem \ref{rhp-org} if and only if $m^{(1)}$ solves the following RH problem.
\begin{rhp}\label{rhp:m1II}
\hfill
\begin{enumerate}[label=\emph{(\alph*)}, ref=(\alph*)]
\item $m^{(1)}(x, t, \cdot) \in I + \dot{E}^2(\C \setminus \Gamma^{(1)})$,
where
\begin{equation}\label{def:gamma1II}
\Gamma^{(1)} := \R \cup \Upsilon_{1,\lambda_0}\cup\Upsilon_{2n+1,\lambda_0}^*\cup\Upsilon_{2n+2,\lambda_0}^*\cup\Upsilon_{4n+2,\lambda_0};
\end{equation}
see Figure \ref{Gamma1leq.pdf} for an illustration and the orientation.

\item For a.e. $\lambda \in \Gamma^{(1)}$, we have
\begin{equation*}
m^{(1)}_+(x,t,\lambda) = m^{(1)}_-(x, t, \lambda) v^{(1)}(x, t, \lambda),
\end{equation*}
where
\begin{align}\label{v1defII}
 v^{(1)}(x, t, \lambda) = \begin{cases}
\begin{pmatrix}
 1 & 0 \\
 r_{a} e^{t\Theta}  & 1
\end{pmatrix}, & \lambda \in \Upsilon_{1,\lambda_0}\cup\Upsilon_{2n+1,\lambda_0}^*,
	\\
\begin{pmatrix}
 1 & - r_{a}^* e^{-t\Theta}  \\
 0  & 1
\end{pmatrix}, & \lambda \in \Upsilon_{2n+2,\lambda_0}^*\cup\Upsilon_{4n+2,\lambda_0},
	\\
\begin{pmatrix}
 1 - |r_{r}|^2 & - r_{r}^* e^{-t\Theta}  \\
 r_{r} e^{t\Theta}  & 1
\end{pmatrix}, & \lambda \in (-\infty,-\lambda_0)\cup(\lambda_0,\infty),
\\
\begin{pmatrix}
 1 - |r|^2 & - r^* e^{-t\Theta}  \\
 r e^{t\Theta}  & 1
\end{pmatrix}, & \lambda  \in (-\lambda_0,\lambda_0).
\end{cases}
\end{align}
\end{enumerate}
\end{rhp}

\subsection{Local parametrix near the origin}
As $t \to \infty$, it is easily seen from the signature of $\re \Theta$ (c.f. Figure \ref{Gamma1leq.pdf}) and Item (c) of Lemma \ref{decompositionlemmaleq} that $v^{(1)}$ in \eqref{v1defII} tends to the identity matrix. The convergence is not uniform for $\lambda \in \Gamma^{(1)} \setminus (-\infty, -\lambda_0]\cup[\lambda_0,\infty)$ and close to $\pm \lambda_0$. Since $\lambda_0=\Boh(t^{-1/(2n+1)})$, we need to build a local parametrix in a small neighborhood of the origin $D(0,\epsilon)$.

To formulate the local parametrix, as in {\bf Case I}, we introduce the new variables $y$ and $z$ by
\begin{align}\label{yzNewdefIVII}
y := -((2n+1)t)^{-\frac{1}{2n+1}}x, \qquad z := ((2n+1)t)^{{\frac{1}{2n+1}}}\lambda,
\end{align}
so that
\begin{align*}
t\Theta(\xi, \lambda) = 2i\left(y z + \frac{(2z)^{2n+1}}{4n+2}\right).
\end{align*}
Since $0\leq {x}\leq M t^{1/(2n+1)}$, it is easily seen that $-C \leq y \leq 0 $ for some $C>0$. Moreover, the critical point $\lambda_0$ is mapped to the point
\begin{equation}\label{def:z_0}
z_0 := ((2n+1)t)^{\frac{1}{2n+1}}\lambda_0 = \sqrt[2n]{|y|}/2
\end{equation}
on the $z$-plane. Following the strategy used in Section \ref{sec:m0I}, we approximate $r_a$ and $r$ by the $N$-th order polynomial $p_N$ given in \eqref{pNdefIV} for large $t$, which also leads to the approximation of $v^{(1)}$ in \eqref{v1defII} by
\begin{align}\label{def:v0II}
v_0(x, t, \lambda):=\left\{
                     \begin{array}{ll}
                        \begin{pmatrix}
 1	& 0 \\
p_N(t, z)e^{t\Theta}  & 1
\end{pmatrix}, & \hbox{$\lambda \in D(0,\epsilon)\cap \{\Upsilon_{1,\lambda_0}\cup\Upsilon_{2n+1,\lambda_0}^*\},$}
\\
                       \begin{pmatrix} 1 & -p_N^*(t, z)e^{-t\Theta}	\\
0	& 1
\end{pmatrix}, & \hbox{$\lambda \in D(0,\epsilon)\cap \{\Upsilon_{2n+2,\lambda_0}^*\cup\Upsilon_{4n+2,\lambda_0}\}$,}
\\
\begin{pmatrix}
 1 - |p_N(t,z)|^2 & - p_N^*(t,z) e^{-t\Theta}  \\
 p_N(t,z) e^{t\Theta}  & 1
\end{pmatrix}, & \lambda  \in (-\lambda_0,\lambda_0).
 \end{array}
 \right.
\end{align}
For $T>0$ large enough, by setting
$${\bf Case~~II^{T}} \doteq  {\bf Case~~II} \cap \{t \geq T \}, $$
the following estimate holds.
\begin{lemma}\label{lem:estv0II}
For $(x,t) \in {\bf Case~~II^{T}}$, we have
\begin{align}\label{v2v0estimateIVg}
 \|v^{(1)} - v_0\|_{L^p(D(0,\epsilon)\cap\Gamma^{(1)} \setminus \{(\infty,-\lambda_0]\cup[\lambda_0,\infty)\})} \leq Ct^{-\frac{N+1}{2n+1}},
\end{align}	
for each $1\leq p\leq \infty$ and some $C>0$, where $v^{(1)}$ and  $v_0$ are defined in \eqref{v1defII} and \eqref{def:v0II}, respectively.
\end{lemma}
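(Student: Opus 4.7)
The plan is to adapt the proof of Lemma \ref{lem:estv0}, with two modifications required by Case II: the four analytic rays now emanate from $\pm\lambda_0$ instead of the origin, and the jump contour $\Gamma^{(1)}\cap D(0,\epsilon)$ additionally contains the real segment $(-\lambda_0,\lambda_0)$ on which $\re\Theta\equiv 0$. First I would split the domain of integration in \eqref{v2v0estimateIVg} into these five pieces and compute $v^{(1)}-v_0$ on each from \eqref{v1defII}--\eqref{def:v0II}: on each ray the difference is off-diagonal with single entry $\pm(r_a-p_N)e^{\pm t\Theta}$, while on $(-\lambda_0,\lambda_0)$ every entry is controlled by $|r-p_N|$ up to a factor bounded by $1+|r|+|p_N|$.

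For the rays I would proceed as in \eqref{PhionY1IV}. On $\Upsilon_{1,\lambda_0}$, parametrising $\lambda=\lambda_0+s e^{i\pi/(4n+2)}$ and using $\Theta'(\xi,\lambda_0)=0$ one gets $\re\Theta\leq -c(\lambda_0^{2n-1}s^2+s^{2n+1})$ for some $c>0$. Lemma \ref{decompositionlemmaleq} then yields $|r_a(\lambda)-P(\lambda)|\leq C|\lambda-\lambda_0|^{N+1}e^{\frac{t}{4}|\re\Theta|}$, where $P$ is the degree-$N$ Taylor polynomial of $r$ at $\lambda_0$. A coefficient-by-coefficient comparison with $p_N$, using Taylor expansions of each $r^{(k)}(\lambda_0)$ about the origin, gives
\begin{equation*}
|P(\lambda)-p_N(\lambda)|\leq C(\lambda_0+|\lambda|)^{N+1}.
\end{equation*}
Combining these two estimates, multiplying by $|e^{t\Theta}|=e^{t\re\Theta}$, and rescaling via $z=((2n+1)t)^{1/(2n+1)}\lambda$ (so that $\lambda_0\mapsto z_0$ is of order one by \eqref{def:z_0}) reduces the integrals to Gaussian-type ones in $z$, producing the bound $\Boh(t^{-(N+1)/(2n+1)})$ in both $L^1$ and $L^\infty$. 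The other three rays are treated by the same argument after applying the appropriate symmetry.

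On the segment $(-\lambda_0,\lambda_0)$ no exponential decay is available since $\re\Theta=0$, and this is the main new obstacle relative to Case I. Smoothness of $r$ together with Taylor's theorem provide $|r(\lambda)-p_N(\lambda)|\leq C|\lambda|^{N+1}$ for real $\lambda$ in a fixed neighborhood of the origin. Since $|\lambda|\leq\lambda_0\leq C t^{-1/(2n+1)}$ throughout the segment, one obtains immediately $\|v^{(1)}-v_0\|_{L^\infty(-\lambda_0,\lambda_0)}\leq C\lambda_0^{N+1}=\Boh(t^{-(N+1)/(2n+1)})$, while integration over an interval of length $2\lambda_0$ yields the sharper $\Boh(t^{-(N+2)/(2n+1)})$ in $L^1$; intermediate $p$ follow by interpolation. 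Collecting the contributions from all five pieces completes the proof.
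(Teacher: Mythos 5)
Your proposal is correct and follows essentially the same route as the paper: split the contour into the four shifted rays and the segment $(-\lambda_0,\lambda_0)$; on the rays use \eqref{raatk0}, the comparison $\bigl|\sum_{j\leq N}\tfrac{r^{(j)}(\lambda_0)}{j!}(\lambda-\lambda_0)^j - p_N\bigr|=\Boh(|\lambda|^{N+1})$ (the paper's \eqref{sumsumdifference}, which is exactly your coefficient-by-coefficient expansion of $r^{(j)}(\lambda_0)$ about $0$), and the decay $\re\Theta\leq -2^{2n+1}|\lambda-\lambda_0|^{2n+1}$ followed by the rescaling $z=((2n+1)t)^{1/(2n+1)}\lambda$; on the segment use $\re\Theta=0$, $|r-p_N|\leq C|\lambda|^{N+1}$ by Taylor's theorem, and $|\lambda|\leq\lambda_0\leq Ct^{-1/(2n+1)}$. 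The only cosmetic differences (keeping the $\lambda_0^{2n-1}s^2$ term, writing $(\lambda_0+|\lambda|)^{N+1}$ instead of $|\lambda|^{N+1}$, and mentioning interpolation for intermediate $p$) do not change the substance of the argument.
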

\begin{proof}
It is readily seen from \eqref{v1defII} and \eqref{def:v0II} that
\begin{align}
& v^{(1)} - v_0
\nonumber
\\
& = \begin{cases}
   \begin{pmatrix}
 0 & 0  \\
 (r_a(x,t,\lambda) - p_N(t,z)) e^{t\Theta}   & 0
  \end{pmatrix}, & \lambda \in D(0,\epsilon)\cap \{\Upsilon_{1,\lambda_0}\cup\Upsilon_{2n+1,\lambda_0}^*\},
  	\\
 \begin{pmatrix}
 0 & -(r_a^*(x, t,\lambda) - p_N^*(t, z)) e^{-t\Theta} \\
0 & 0
  \end{pmatrix}, & \lambda \in D(0,\epsilon)\cap \{\Upsilon_{2n+2,\lambda_0}^*\cup\Upsilon_{4n+2,\lambda_0}\},
\\
 \begin{pmatrix} - |r|^2 + |p_N|^2
  &  -(r^*(\lambda) - p_N^*(t, z)) e^{-t\Theta} \\
(r(\lambda) - p_N(t,z)) e^{t\Theta} 	& 0
  \end{pmatrix}, & \lambda \in (-\lambda_0,\lambda_0). \label{v1minusv0IVg}
\end{cases}
\end{align}
We will prove \eqref{v2v0estimateIVg} for $\lambda\in (-\lambda_0,\lambda_0)$ and $\lambda \in D(0, \epsilon) \cap \Upsilon_{1,\lambda_0}$, since the proofs for the other contours follow from similar arguments.

By expanding $r^{(j)}(\lambda_0)$, $j=1,\ldots,N$, around the origin, it is easily seen that
\begin{align}\nonumber
\sum_{j=0}^N \frac{r^{(j)}(\lambda_0)}{j!} (\lambda-\lambda_0)^j
& = \sum_{j=0}^N\sum_{l=0}^N \frac{r^{(j+l)}(0)}{j! l!} \lambda_0^l(\lambda-\lambda_0)^j  + \Boh(\lambda_0^{N+1})
	\\\nonumber
& = \sum_{l=0}^N \sum_{\kappa=l}^{N+l} \frac{r^{(\kappa)}(0)}{(\kappa-l)! l!} \lambda_0^l(\lambda-\lambda_0)^{\kappa-l}   + \Boh(\lambda_0^{N+1})
	\\\label{sumsumdifference}
&= \sum_{\kappa=0}^{N} \frac{r^{(\kappa)}(0)}{\kappa!}\lambda^{\kappa}  + \Boh(|\lambda|^{N+1})= P_N(t,z) + \Boh(|\lambda|^{N+1}),
\end{align}
uniformly for $0 \leq \lambda_0 \leq C$ and $\lambda \in D(0,\epsilon)\cap\Gamma^{(1)} \setminus \{(\infty,-\lambda_0]\cup[\lambda_0,\infty)\})$. If $\lambda \in (-\lambda_0,\lambda_0)$, we have $\re \Theta = 0$. A combination of \eqref{v1minusv0IVg}, \eqref{sumsumdifference} and Lemma \ref{decompositionlemmaleq} shows that
\begin{align*}
 |v^{(1)} - v_0| \leq C |r(\lambda) - p_N(t,z)| \leq C |\lambda|^{N+1} \leq Ct^{-\frac{N+1}{2n+1}},
\end{align*}
which gives us \eqref{v2v0estimateIVg} for $\lambda\in(-\lambda_0,\lambda_0)$. If $\lambda = \lambda_0 + \varrho e^{\frac{\pi i}{4n+2}} \in D(0,\epsilon) \cap \Upsilon_{1,\lambda_0}$, we see from the fact  $\xi=(2n+1)(2\lambda_0)^{2n}>0$ and \eqref{def:Thetaxi} that
\begin{align*}
\re \Theta(\xi, \lambda)& = \re \Theta(\xi, \lambda_0 + \varrho e^{\frac{\pi i}{4n+2}})\\
&=\re 2i(-\xi (\lambda_0+\varrho e^{\frac{\pi i}{4n+2}})+2^{2n}(\lambda_0+\varrho e^{\frac{\pi i}{4n+2}})^{2n+1})\\
&= - 2^{2n+1}\bigg( \begin{pmatrix}2n+1\\2\end{pmatrix}\lambda_0^{2n-1} \varrho^2 \sin \frac{n}{2n+1} \pi +\cdots+\varrho^{2n+1} \bigg)\\
& \leq  -2^{2n+1}\varrho^{2n+1}=-2^{2n+1}|\lambda-\lambda_0|^{2n+1},
\end{align*}
where $\begin{pmatrix}n\\k\end{pmatrix}$ is the binomial number. Thus,
\begin{align}\label{etPhiIVg}
e^{-\frac{3t}{4}|\re \Theta|} \leq e^{-\frac{3}{4}t2^{2n+1}\varrho^{2n+1}} \leq & Ce^{-\frac{3}{4}t|\lambda|^{2n+1}} \leq C e^{-\frac{3}{4(2n+1)}|z|^{2n+1}}.
\end{align}
It then follows from \eqref{raatk0}, \eqref{v1minusv0IVg}, \eqref{sumsumdifference} and \eqref{etPhiIVg} that
\begin{align*}
 |v^{(1)} - v_0| & \leq C|r_a(x,t,\lambda) - p_N(t,z)| e^{t \re \Theta}
\nonumber
\\
 & \leq C\bigg|r_{a}(x, t, \lambda) - \sum_{j=0}^N \frac{r^{(j)}(\lambda_0)}{j!} (\lambda-\lambda_0)^j\bigg| e^{t \re \Theta}
 	\\
& \quad + C\bigg|\sum_{j=0}^N \frac{r^{(j)}(\lambda_0)}{j!} (\lambda-\lambda_0)^j - \sum_{j=0}^N \frac{r^{(j)}(0)}{j!} \lambda^j\bigg| e^{t \re \Theta}
	\\
 & \leq  C |\lambda-\lambda_0|^{N+1} e^{-\frac{3}{4}t |\re \Theta|} +  C|\lambda|^{N+1} e^{- t |\re \Theta|}
 \nonumber
\\
& \leq C\big|zt^{-\frac{1}{2n+1}}\big|^{N+1} e^{-\frac{3}{4(2n+1)|z|^{2n+1}}}.
\end{align*}
As in the proof of Lemma \ref{m0lemmaIV}, this implies
$$\|v^{(1)} - v_0\|_{(L^1 \cap L^\infty)(D(0,\epsilon)\cap \Upsilon_{1,\lambda_0})} \leq Ct^{-\frac{N+1}{2n+1}},$$
as required.
\end{proof}

In the virtue of Lemma \ref{lem:estv0II}, we are then led to consider the following local parametrix.
\begin{rhp}\label{rhp:m0II}
\hfill
\begin{enumerate}[label=\emph{(\alph*)}, ref=(\alph*)]
\item $m_0(x, t, \lambda)$ is analytic for $D(0,\epsilon)\cap\Gamma^{(1)} \setminus \{(-\infty,-\lambda_0]\cup [\lambda_0,\infty\}$, where $\Gamma^{(1)} $ is defined in \eqref{def:gamma1II}.

\item For a.e. $\lambda \in D(0,\epsilon)\cap\Gamma^{(1)} \setminus \{(-\infty,-\lambda_0]\cup [\lambda_0,\infty\}$, we have
\begin{equation}
m_{0,+}(x,t,\lambda) = m_{0,-}(x, t, \lambda) v_0(x, t, \lambda),
\end{equation}
where $ v_0(x, t, \lambda)$ is defined in \eqref{def:v0II}.

\item For $\lambda \in \partial D(0,\epsilon)$, we have $m_{0}(x,t,\lambda) \to I$ as $t \to \infty$.
\end{enumerate}
\end{rhp}

We can solve the above RH problem with the aid of the solution of model RH problem introduced in Section \ref{IVgeqapp}. Let $m^Z(y,t,z_0,z)$ be solution of RH problem \ref{modelRHPII} with the polynomial \eqref{psumIV} in the jump condition given by \eqref{pNdefIV}, we define
\begin{align}\label{m0defIVg}
m_0(x, t, \lambda) := m^Z(y, t, z_0, z), \qquad \lambda \in D(0,\epsilon),
\end{align}
where $y$, $z$ and $z_0$ are defined in \eqref{yzNewdefIVII} and \eqref{def:z_0}, respectively. If $(x,t) \in {\bf Case~~II^T}$, then $(y,t,z_0) \in \mathcal{P}_T$, where $\mathcal{P}_T$ defined in (\ref{parametersetIVg}). By Lemma \ref{ZlemmaIVg}, we can choose suitable $T$ such that $m_0$ in \eqref{m0defIVg} is well-defined and check that it indeed solves the RH problem \ref{rhp:m0II}. The function $m_0$ in this case has similar properties as that in ${\bf Case~~I}$, which are collected in the following lemma and follow directly from Lemma \ref{ZlemmaIVg}.

\begin{lemma}\label{m0lemmaIVg}
For each $(x,t) \in {\bf Case~~II^T}$, the function $m_0(x,t,\lambda)$ defined in (\ref{m0defIVg}) is uniformly bounded, i.e.,
\begin{align}\label{m0boundIVg}
|m_0(x,t,\lambda)| \leq C, \qquad \lambda \in D(0,\epsilon) \cap \Gamma^{(1)} \setminus \{(-\infty,-\lambda_0]\cup [\lambda_0,\infty\}.
\end{align}
Moreover,
\begin{align}\label{m0circleIV-2}
 m_0(x,t,\lambda)^{-1}  = I +  \sum_{j=1}^N \sum_{l=0}^N \frac{\Phi^{(0)}_{jl}(y)}{\lambda^j t^{\frac{j+l}{2n+1}}} +  O(t^{-\frac{N+1}{2n+1}})
\end{align}
uniformly for $(x,t) \in {\bf Case~~II^T}$ and $\lambda \in \partial D(0,\epsilon)$. The function $m_0(x,t,\lambda)$ satisfies \eqref{m0LinftyestimateIV} and \eqref{m0circleIV} uniformly for $(x,t) \in {\bf Case~~II^T}$, where the coefficients $\{g_j(y)\}_1^N$ are smooth functions of $y\in \R$ with
\begin{align}\label{g1explicitII}
(g_1)_{12}(y) = (g_1)_{21}(y)= -
\frac{(2n+1)^{-\frac{1}{2n+1}}}{2} q_{AS,n}(-y;ir(0)).
\end{align}
If $r(0)=0$, we still have \eqref{eq:g1}--\eqref{g1g2g3explicit} for the first few terms in \eqref{m0circleIV-2}.
\end{lemma}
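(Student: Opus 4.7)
The plan is to mirror the proof of Lemma \ref{m0lemmaIV} step by step, with the model RH problem \ref{RHmYIV} for $m^{\Upsilon^*}$ replaced by the model RH problem \ref{modelRHPII} for $m^Z$, which is tailored to accommodate the two real critical points $\pm \lambda_0$ that bifurcate from the origin. First I would verify that the hypotheses of Lemma \ref{ZlemmaIVg} are satisfied: for $(x,t) \in {\bf Case~~II}^T$ with $T$ sufficiently large, the triple $(y,t,z_0)$ defined through \eqref{yzNewdefIVII} and \eqref{def:z_0} lies in the parameter set $\mathcal{P}_T$ (since $-C \leq y \leq 0$ and $z_0 = \sqrt[2n]{|y|}/2$ is bounded). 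The uniform boundedness \eqref{m0boundIVg} then follows directly from the corresponding bound for $m^Z$ via the identification \eqref{m0defIVg}.

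Next I would transfer the large-$z$ expansion from Lemma \ref{ZlemmaIVg}. On $\partial D(0,\epsilon)$ one has $|z| = \epsilon ((2n+1)t)^{1/(2n+1)} \to \infty$, so the asymptotic expansion of $m^Z(y,t,z_0,z)$ in inverse powers of $z$ can be re-expressed in powers of $\lambda$ and $t^{-1/(2n+1)}$, yielding \eqref{m0circleIV-2} with coefficients $\Phi^{(0)}_{jl}(y)$ that are smooth in $y$ and inherit their smoothness from the corresponding coefficients $\Phi^{Z}_{jl}$ of the model solution. The $L^\infty$ bound \eqref{m0LinftyestimateIV} is read off from the $j=1$, $l=0$ term, and \eqref{m0circleIV} follows by applying Cauchy's theorem to the $1/\lambda$ coefficient on $\partial D(0,\epsilon)$, exactly as in the Case I derivation.

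The identification of $g_1$ is the content-carrying step. By construction, the model RH problem for $m^Z$ (with the polynomial $p_N$ specialized to \eqref{pNdefIV} and constant term $s = r(0)$) is, after a further rescaling and removal of the oscillatory factor, equivalent to the Painlev\'e II hierarchy RH problem \ref{rhp:psi} with Stokes multipliers given by \eqref{eq:SMs} for $\rho = ir(0)$. Consequently $\Phi^{Z}_{10}(y)$ is expressible through the residue matrix $\Psi_1$ of that RH problem via \eqref{eq:qPsi}. The sign change $y \mapsto -y$ between \eqref{g1explicitII} and \eqref{g1explicitIV} originates in the opposite sign convention of \eqref{yzNewdefIVII} versus \eqref{yzdefIV}, combined with the $n$-even branch of \eqref{eq:qPsi} and the identity \eqref{eq:qsym}; tracking this sign carefully is what delivers $q_{\AS,n}(-y;ir(0))$ rather than $q_{\AS,n}(y;ir(0))$. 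When $r(0)=0$, the constant term of $p_N$ vanishes, so the model problem reduces to one with pure Airy-type behavior, and the explicit formulas for $g_1,g_2,g_3$ in \eqref{eq:g1}--\eqref{g1g2g3explicit} are transcribed directly from the corresponding formulas in Lemma \ref{ZlemmaIVg}.

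The main obstacle I anticipate is ensuring that all error estimates are uniform as $(y,t,z_0)$ varies over $\mathcal{P}_T$, especially in the degenerate limit $z_0 \to 0$ where the two critical points coalesce with the origin; this is precisely the regime in which Case II visually approaches Case I, and the model RH problem \ref{modelRHPII} must be constructed so that its solution extends continuously (with uniform estimates) through $z_0 = 0$. Provided Lemma \ref{ZlemmaIVg} has been established with this uniformity built in, everything else is a bookkeeping exercise parallel to the Case I proof.
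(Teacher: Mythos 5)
Your proposal matches the paper's own (one-line) proof, which simply observes that the statements follow directly from Lemma \ref{ZlemmaIVg}; you correctly flesh out that reduction by verifying $(y,t,z_0)\in\mathcal{P}_T$, transferring the large-$z$ expansion of $m^Z$ to $m_0$ on $\partial D(0,\epsilon)$ (where $|z|\to\infty$), and reading $g_1$ off the $1/\lambda$ coefficient by Cauchy's theorem, with the uniformity through $z_0\to 0$ being precisely the content already built into Lemma \ref{ZlemmaIVg}. One small detail to tighten: from $s=r(0)=i\rho$ in \eqref{sp1p2} and \eqref{psumIV} the model parameter is $\rho=-ir(0)$, not $ir(0)$; it is the combination of the factor $(-1)^{n+1}=-1$ in \eqref{m10Yexplicit} (since $n$ is even in Case II), the prefactor $-(2n+1)^{-1/(2n+1)}$ relating $\Phi^{(0)}_{10}$ to $\Phi_{10}^{\Upsilon}$, and the symmetry \eqref{eq:qsym} that together produce the minus sign and the argument $-y$ in \eqref{g1explicitII}.
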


\subsection{Final transformation}
As in ${\bf Case~~I}$, we define $\hat m$ by \eqref{mhatdef} in the final transformation, but it is understood that the functions $m^{(1)}$ and $m_0$ therein solve RH problems \ref{rhp:m1II} and \ref{rhp:m0II}, respectively. Thus, $\hat{m}$ satisfies RH problem \ref{RHmhat} with the contour $\hat \Gamma:=\Gamma^{(1)}\cup \partial D(0,\epsilon)$ illustrated in Figure \ref{Gammahatleq.pdf}.
\begin{figure}
\begin{center}
\begin{overpic}[width=.5\textwidth]{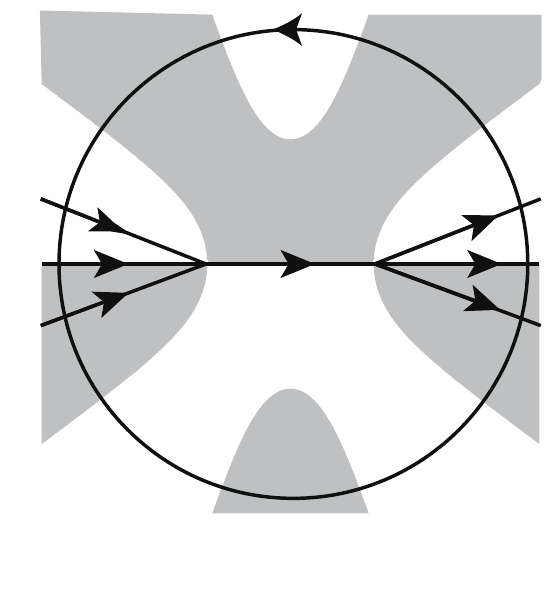}
      \put(92,60){\small $\hat{\Gamma}$}
            \put(90,50){\small $\epsilon$}
      \put(3,50){\small $-\epsilon$}
    \end{overpic}
\caption{\label{Gammahatleq.pdf}
        The contour  $\hat{\Gamma}$ in ${\bf Case~~II}$.
}
     \end{center}
\end{figure}

The estimates of $\hat w$ in \eqref{def:hatw} for {\bf Case~~II} are given in the following lemma, which can be proved in a manner similar to that of Lemma \ref{wlemmaIV}.
\begin{lemma}\label{wlemmaIVg}
For each $1 \leq p \leq \infty$, the following estimates hold uniformly for $(x,t) \in {\bf Case~~II^T}$:
\begin{subequations}\label{westimateIVg}
\begin{align}\label{westimateIVga}
& \|\hat{w}\|_{L^p(\partial D(0,\epsilon))} \leq Ct^{-\frac{1}{2n+1}},
	\\\label{westimateIVgb}
& \|\hat{w}\|_{L^p(D(0,\epsilon)\cap\Gamma^{(1)} \setminus \{(-\infty,-\lambda_0]\cup [\lambda_0,\infty\})} \leq Ct^{-\frac{N+1}{2n+1}},
	\\ \label{westimateIVgc}
& \|\hat{w}\|_{L^p(\R \setminus [-\lambda_0,\lambda_0])} \leq C t^{-N},
	\\\label{westimateIVgd}
& \|\hat{w}\|_{L^p(\Gamma^{(1)} \setminus \{\R\cup \overline{D(0,\epsilon)}\})} \leq Ce^{-ct},
\end{align}
for some positive constants $c$ and $C$.
\end{subequations}
\end{lemma}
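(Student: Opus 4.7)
The plan is to mirror the proof of Lemma \ref{wlemmaIV} essentially line-by-line, replacing the inputs from Section \ref{proof:th2th3-1} with their ${\bf Case~~II}$ analogues from Lemmas \ref{decompositionlemmaleq}, \ref{lem:estv0II}, and \ref{m0lemmaIVg}. The definition of $\hat{w} = \hat{v} - I$ on the various pieces of $\hat\Gamma$ is unchanged in form, so the real task is to check that each analytic ingredient survives the switch.

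First I would handle \eqref{westimateIVga}: on $\partial D(0,\epsilon)$ we have $\hat w = m_0^{-1} - I$, and Lemma \ref{m0lemmaIVg} provides exactly the same $L^\infty$ estimate \eqref{m0LinftyestimateIV} as in ${\bf Case~~I}$, so the $L^p$ bound on a bounded contour follows immediately. Next, for \eqref{westimateIVgb} I would use the factorization $\hat w = m_{0,-}(v^{(1)} - v_0)m_{0,+}^{-1}$ on $D(0,\epsilon)\cap\Gamma^{(1)}\setminus\{(-\infty,-\lambda_0]\cup[\lambda_0,\infty)\}$; the uniform boundedness \eqref{m0boundIVg} of $m_0$ together with the approximation estimate \eqref{v2v0estimateIVg} from Lemma \ref{lem:estv0II} gives the stated $t^{-(N+1)/(2n+1)}$ rate in every $L^p$ norm, since the relevant portion of the contour is compact.

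For \eqref{westimateIVgc} I would split $\R\setminus[-\lambda_0,\lambda_0]=(-\infty,-\lambda_0)\cup(\lambda_0,\infty)$, where by \eqref{v1defII} the jump $v^{(1)}$ is generated purely by the small remainder $r_r$ of the decomposition in Lemma \ref{decompositionlemmaleq}. Part~(c) of that lemma says $\|r_r\|_{L^1\cap L^\infty}=\Boh(t^{-N})$ uniformly in $\xi\in[0,A]$, and multiplying by the bounded $m_{0,\pm}$ on the part inside $D(0,\epsilon)$ and the identity on the part outside gives the claim for all $1\le p\le\infty$ via interpolation. Finally, \eqref{westimateIVgd} follows from the signature picture in Figure \ref{Gamma1leq.pdf}: on each ray $\Upsilon_{1,\lambda_0},\Upsilon_{2n+1,\lambda_0}^*,\Upsilon_{2n+2,\lambda_0}^*,\Upsilon_{4n+2,\lambda_0}$ outside of $\overline{D(0,\epsilon)}$, one has $|\re\Theta|\ge c>0$ uniformly for $(x,t)\in{\bf Case~~II}^T$, because the critical points stay within $|\lambda|\le Ct^{-1/(2n+1)}\ll\epsilon$. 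Combined with the polynomial bound on $r_a$ from part~(b) of Lemma \ref{decompositionlemmaleq}, this yields $|\hat w|\le Ce^{-ct}$ pointwise, and the $L^p$ bound on these (non-compact but rapidly decaying) contours follows by a standard computation.

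The only step with any subtlety is the uniformity in $\xi$ for \eqref{westimateIVgd}: one must verify that the constant $c>0$ controlling $|\re\Theta|$ on the deformed rays can be chosen independent of $\xi\in[0,M^{2n+1}/(2n+1)]$, i.e. independent of the position of $\pm\lambda_0$ inside $D(0,\epsilon/2)$, say. This is precisely the reason the rays $\Upsilon_{1,\lambda_0}$ etc.\ were defined by translation of the fixed directions $\Upsilon_j$ from $\pm\lambda_0$, since the leading term of $\re\Theta$ along such a ray, evaluated at distance $\varrho$ from $\pm\lambda_0$ past $\partial D(0,\epsilon)$, is $-2^{2n+1}\varrho^{2n+1}$ plus a positive quadratic correction (as in the computation leading to \eqref{etPhiIVg}), giving a uniform lower bound once $\varrho\ge\epsilon/2$. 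I do not expect any other genuine obstacle; the rest is bookkeeping identical to Lemma \ref{wlemmaIV}.
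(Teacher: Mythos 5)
Your proof is correct and follows exactly the route the paper takes: the paper's own proof of Lemma~\ref{wlemmaIVg} is simply the remark that it ``can be proved in a manner similar to that of Lemma~\ref{wlemmaIV},'' and you have supplied precisely that spelled-out analogy, citing the correct Case~II replacements (\eqref{m0LinftyestimateIV} via Lemma~\ref{m0lemmaIVg} for \eqref{westimateIVga}; \eqref{m0boundIVg} together with \eqref{v2v0estimateIVg} for \eqref{westimateIVgb}; Item~(c) of Lemma~\ref{decompositionlemmaleq} for \eqref{westimateIVgc}; the pointwise bound on $r_a$ and the signature of $\re\Theta$ for \eqref{westimateIVgd}). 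One small slip: the stated range ``$\xi\in[0,M^{2n+1}/(2n+1)]$'' for the uniformity check in \eqref{westimateIVgd} is not the right parametrization --- in ${\bf Case~~II^T}$ one has $\xi=x/t\le M\,t^{-2n/(2n+1)}\le M\,T^{-2n/(2n+1)}$, so $\xi$ (hence $\lambda_0$) is bounded and in fact tends to $0$ as $t\to\infty$ --- but the conclusion you need, that $\pm\lambda_0$ stay well inside $D(0,\epsilon)$ so that $|\re\Theta|$ has a uniform positive lower bound off $\overline{D(0,\epsilon)}$, is correct and your justification via the expansion leading to \eqref{etPhiIVg} is the right one.
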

\begin{remark}\label{rk:caseIII}
We have completed asymptotic analysis of the RH problem \ref{rhp-org} for $m$ in \text{\bf Case I} and \text{\bf Case II}.
For \text{\bf Case III}, critical points of the phase function $\Theta$ in \eqref{def:Thetaxi} are not on the real line; see the left picture in Figure \ref{fig:caseIIIV}. Therefore, the asymptotic analysis is analogous to that in \text{\bf Case I} and local analysis near the origin is related to the model RH problem \ref{RHmYIV}. For \text{\bf Case IV}, the phase function $\Theta$ has two critical points lying on the real line; see the right picture in Figure \ref{fig:caseIIIV}, which implies that the analysis can be carried out in a way similar to that in \text{\bf Case II} and the model RH problem \ref{modelRHPII} will play a crucial role in the local analysis.
\end{remark}
\begin{figure}
\centering
\subfigure[$n=2$]{
    \begin{minipage}[b]{0.3\linewidth} 
    \begin{overpic}[width=5cm]{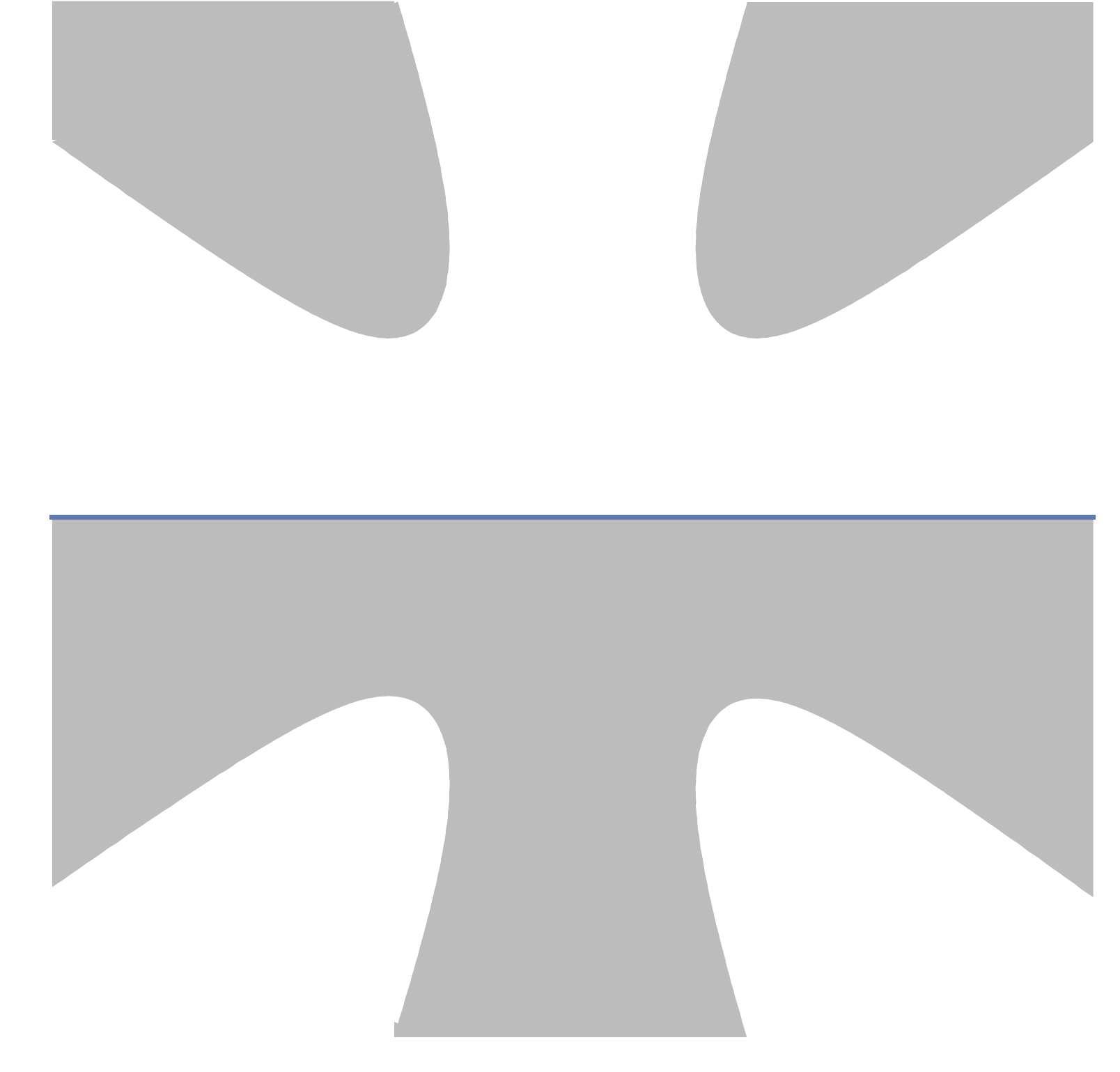}\vspace{1pt}
    \put(39,65){\circle*{2}}
       \put(39,35){\circle*{2}}
        \put(61,35){\circle*{2}}
        \put(61,65){\circle*{2}}
       \put(41,88){\tiny $\re \Theta < 0$}
      \put(41,8){\tiny $\re \Theta >0$}
    \end{overpic}
\end{minipage}}
\quad 
\qquad
\qquad
\subfigure[$n=3$]{
    \begin{minipage}[b]{0.35\linewidth}
    \begin{overpic}[width=5cm]{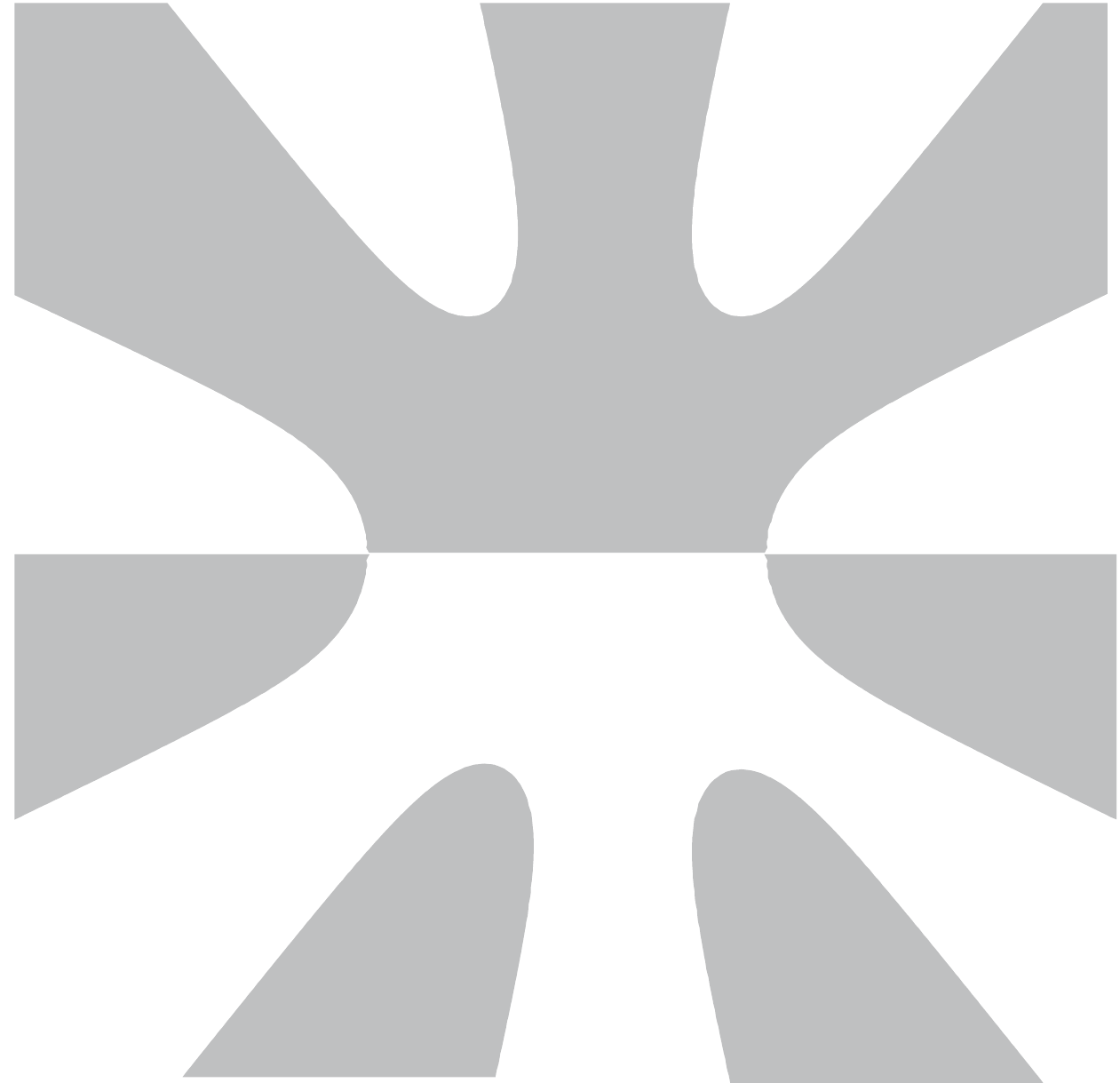}\vspace{1pt}
      \put(44,31){\circle*{2}}
      \put(44,66){\circle*{2}}
      \put(33,47){\circle*{2}}
       \put(68,47){\circle*{2}}
        \put(61,31){\circle*{2}}
        \put(61,66){\circle*{2}}

      \put(42,55){\tiny $\re \Theta > 0$}
      \put(42,38){\tiny $\re \Theta < 0$}
    \end{overpic}
    \end{minipage}
}
\caption{\label{fig:caseIIIV}
       The critical points and signature of $\re \Theta$ in the complex $\lambda$-plane with $\xi=-1$ for \text{\bf Case III}  and \text{\bf Case IV}. The shaded areas indicate the regions where $\re \Theta > 0$ for $n=2$ (left) and $n=3$ (right).}
\end{figure}


\section{Proofs of main results}\label{sec:proofs}

\subsection{Proof of Theorem \ref{prop:connection}}
The asymptotics \eqref{eq:asypinfty} is given in \cite{CCG2019} by analysing the RH problem \ref{rhp:psi} for $\Psi$ as $x\to \infty$ and we only need to show \eqref{eq:asypminusinfty}. By tracing back the transformations $\Psi \to X \to Y \to T \to R$ given in \eqref{def:X}, \eqref{def:Y}, \eqref{def:T} and \eqref{def:R}, it is readily seen from \eqref{eq:qPsi} and \eqref{eq:Rintegral} that
\begin{align*}
&q_{\AS,n}((-1)^{n+1}x;\rho) = 2i(\Psi_1)_{12}(x)
\\
&=2|x|^{\frac{1}{2n}}i \lim_{\zeta \to \infty}\zeta R_{12}(\zeta)
=-\frac{|x|^{\frac{1}{2n}}}{\pi}\int_{\Sigma_R}\bigg(R_-(s)(J_R(s)-I)\bigg)_{12} d s.
\end{align*}
This, together with \eqref{eq:JRest} and \eqref{eq:Rest}, implies that, as $x\to -\infty$,
\begin{multline}\label{eq:qAS1}
q_{\AS,n}((-1)^{n+1}x;\rho)
\\
=-\frac{|x|^{\frac{1}{2n}}}{\pi}\left(\int_{\partial D(1/2,\delta)\cup \partial D(-1/2,\delta)}(J_R)_{12}(s)ds+\Boh(|x|^{-\frac{2n+1}{2n}})\right).
\end{multline}
In view of \eqref{def:JR}, we obtain from \eqref{eq:prmatching} and \eqref{def:Pl} that, for large negative $x$,
\begin{equation*}
(J_R)_{12}(\zeta)=\left\{
                \begin{array}{ll}
                  -\frac{\rho\nu}{h_1}e^{2n|x|^{\frac{2n+1}{2n}}i/(2n+1)}\frac{\beta(\zeta)^2}{\eta(\zeta)}|x|^{-\frac{2n+1}{4n}}+\Boh(|x|^{-\frac{2n+1}{2n}}), & \hbox{$\zeta \in \partial D(1/2,\delta)$,} \\
                  -\frac{h_1}{\rho}e^{-2n|x|^{\frac{2n+1}{2n}}i/(2n+1)}\frac{1}{\beta(-\zeta)^{2}\eta(-\zeta)}|x|^{-\frac{2n+1}{4n}}+\Boh(|x|^{-\frac{2n+1}{2n}}), & \hbox{$\zeta \in \partial D(-1/2,\delta)$.}
                \end{array}
              \right.
\end{equation*}
Inserting the above estimates into \eqref{eq:qAS1}, it is readily seen that
\begin{multline*}
q_{\AS,n}((-1)^{n+1}x;\rho) =\frac{1}{\pi|x|^{\frac{2n-1}{4n}}}\Bigg(\frac{\rho \nu}{h_1}e^{2|x|^{\frac{2n+1}{2n}}i/3}\int_{\partial D(1/2,\delta)}\frac{\beta(s)^2}{\eta(s)}ds
\\
+
\frac{h_1}{\rho}e^{-2|x|^{\frac{2n+1}{2n}}i/3}\int_{\partial D(-1/2,\delta)}\frac{1}{\beta(-s)^2\eta(-s)}ds
\Bigg)
+
\Boh(|x|^{-1}).
\end{multline*}
By \eqref{eq:etalocal} and \eqref{def:beta}, we have
\begin{align*}
\int_{\partial D(1/2,\delta)}\frac{\beta(s)^2}{\eta(s)}ds & =\frac{\pi}{\sqrt{2n}}e^{\frac{3}{2}\nu\pi i+\frac{3}{4}\pi i}\left(8n|x|^{\frac{2n+1}{2n}}\right)^{\nu},
\\
\int_{\partial D(-1/2,\delta)}\frac{1}{\beta(-s)^2\eta(-s)}ds & = -\frac{\pi}{\sqrt{2n}}e^{-\frac{3}{2}\nu\pi i+\frac{3}{4}\pi i}\left(8n|x|^{\frac{2n+1}{2n}}\right)^{-\nu}.
\end{align*}
Thus,
\begin{align}\label{eq:qAS3}
& q_{\AS,n}((-1)^{n+1}x;\rho)
\nonumber
\\
&= \frac{e^{\frac34 \pi i}}{\sqrt{2n}|x|^{\frac{2n-1}{4n}}}\Bigg(\frac{\rho \nu}{h_1}e^{\frac{2n|x|^{(2n+1)/(2n)}}{2n+1}i+\frac{3}{2}\nu\pi i}\left(8n|x|^{\frac{2n+1}{2n}}\right)^{\nu}
\nonumber
\\
& \quad -
\frac{h_1}{\rho}e^{-\frac{2n|x|^{(2n+1)/(2n)}}{2n+1}i-\frac{3}{2}\nu\pi i}\left(8n|x|^{\frac{2n+1}{2n}}\right)^{-\nu}
\Bigg)
+
\Boh(|x|^{-1})
\nonumber
\\
&= \sqrt{\frac{\pi}{n}}\frac{1}{\rho |x|^{\frac{2n-1}{4n}}}\Bigg(\frac{1}{\Gamma(\nu)}e^{\frac{2n|x|^{(2n+1)/(2n)}}{2n+1}i-\frac{\nu}{2}\pi i+\nu \ln\left(8 n |x|^{\frac{2n+1}{2n}}\right)+\frac{1}{4}\pi i}
\nonumber
\\
& \quad +
\frac{1}{\Gamma(-\nu)}e^{-\frac{2n|x|^{(2n+1)/(2n)}}{2n+1}i-\frac{\nu}{2}\pi i-\nu \ln\left(8 n |x|^{\frac{2n+1}{2n}}\right)-\frac{1}{4}\pi i}
\Bigg)
+
\Boh(|x|^{-1})
\nonumber
\\
&=\frac{2}{\sqrt{n}|x|^{\frac{2n-1}{4n}}}\re \left[ \frac{\sqrt{\pi}}{\rho \Gamma(\nu)}e^{-\frac{\nu}{2}\pi i+ \frac{2n|x|^{(2n+1)/(2n)}}{2n+1}i+\nu \ln\left(8 n |x|^{\frac{2n+1}{2n}}\right)+\frac{1}{4}\pi i} \right]+
\Boh(|x|^{-1}),
\end{align}
where we have made use of the facts that $h_1=\frac{\sqrt{2 \pi}}{\Gamma(-\nu)}e^{i\pi \nu}$, $\nu=-\frac{\ln(1-\rho^2)}{2\pi i}$ and
\begin{equation}\label{eq:GammaProduct}
\Gamma(\nu)\Gamma(-\nu)=-\frac{\pi}{\nu\sin(\nu \pi)}=\frac{2\pi}{\rho^2 \nu i}e^{-\nu \pi i}
\end{equation}
in the second equality, and since $\nu$ is purely imaginary, the third equality follows immediately. To this end, we note that $|\Gamma(\nu)|^2=\Gamma(\nu)\Gamma(-\nu)$, it is then easily seen from \eqref{eq:GammaProduct} that
\begin{equation*}
\Gamma(\nu)^{-1}=\frac{|\rho|}{\sqrt{2\pi}}\sqrt{-\ln(1-\rho^2)}e^{\frac{\nu}{2}\pi i+i\arg \Gamma(-\nu)}.
\end{equation*}
Substituting this formula into \eqref{eq:qAS3}, we then obtain the asymptotics \eqref{eq:asypminusinfty}--\eqref{eq:connection} by straightforward calculations, which completes the proof of Theorem \ref{prop:connection}.
\qed

\subsection{Proofs of Theorems \ref{mainth1} and \ref{mainth2}}
We will prove Theorems \ref{mainth1} and \ref{mainth2} for {\bf Case~~I} and {\bf Case~~II} in a unified way. Thus, it is always assumed that $(x,t) \in {\bf Case~~I^{T}} \cup {\bf Case~~II^{T}}$ for some $T$ and the same notations might have different definitions for different cases.

To proceed, we observe from Lemmas \ref{wlemmaIV} and \ref{wlemmaIVg} that
\begin{align}\label{hatwLinftyIV}
\|\hat{w}\|_{(L^1 \cap L^\infty)(\hat{\Gamma})} \leq Ct^{-\frac{1}{2n+1}}.
\end{align}
In particular, $\|\hat{w}\|_{L^\infty(\hat{\Gamma})} \to 0$ uniformly as $t \to \infty$, which implies that there exist a suitable constant $T$ such that
\begin{equation}\label{eq:esthatC}
\|\hat{\mathcal{C}}_{\hat{w}}\|_{\mathcal{B}(L^2(\hat{\Gamma}))} \leq C \|\hat{w}\|_{L^\infty(\hat{\Gamma})} \leq 1/2
\end{equation}
for all $(x,t)$, where $$\hat{\mathcal{C}}_{\hat{w}}f := \hat{\mathcal{C}}_-(f\hat{w})$$ with $\hat{\mathcal{C}}$ being the Cauchy transform associated with the contour $\hat{\Gamma}$; see Section \ref{notation}. By \eqref{eq:esthatC}, the operator $I-\hat{\mathcal{C}}_{\hat{w}} \in \mathcal{B}(L^2(\hat{\Gamma}))$ is invertible and the RH problem \ref{RHmhat} has a unique solution given by
\begin{align}\label{mhatrepresentation}
\hat{m}(x, t, \lambda) = I + \hat{\mathcal{C}}(\hat{\mu} \hat{w}) = I + \frac{1}{2\pi i}\int_{\hat{\Gamma}} (\hat{\mu} \hat{w})(x, t, s) \frac{ds}{s - \lambda},
\end{align}
where $\hat{\mu}(x, t, \lambda) \in I + L^2(\hat{\Gamma})$ is defined by $\hat{\mu} = I + (I - \hat{\mathcal{C}}_{\hat{w}})^{-1}\hat{\mathcal{C}}_{\hat{w}}I$.
Also, it is readily seen from \eqref{hatwLinftyIV} that
\begin{align*}
\|\hat{\mu}(x,t,\cdot) - I\|_{L^2(\hat{\Gamma})}
\leq  \frac{C\|\hat{w}\|_{L^2(\hat{\Gamma})}}{1 - \|\hat{\mathcal{C}}_{\hat{w}}\|_{\mathcal{B}(L^2(\hat{\Gamma}))}}\leq C t^{-\frac{1}{2n+1}}.
\end{align*}

For $\lambda \in \hat{\Gamma}$, it is easily seen from \eqref{m0invexpansionIV}, \eqref{m0circleIV-2}, Lemmas \ref{wlemmaIV} and \ref{wlemmaIVg} that as $t \to \infty$,
\begin{equation}\label{eq:hatwexp}
\hat{w}(x, t,\lambda) = \frac{\hat{w}_1(y,\lambda)}{t^{\frac{1}{2n+1}}}+ \frac{\hat{w}_2(y,\lambda)}{t^{\frac{2}{2n+1}}} + \cdots + \frac{\hat{w}_N(y,\lambda)}{t^{\frac{N}{2n+1}}} + \frac{\hat{w}_{err}(x,t,\lambda)}{t^{\frac{N+1}{2n+1}}},
\end{equation}
where the coefficients $\{\hat{w}_j\}_{j=1}^N$ are nonzero only for $\lambda \in \partial D(0,\epsilon)$, and for $1 \leq p \leq \infty$,
\begin{align*}
\begin{cases}
\|\hat{w}_j(y,\cdot)\|_{L^p(\hat{\Gamma})}  \leq C, \qquad   j = 1, \dots, N,\\
\|\hat{w}_{err}(x,t,\cdot)\|_{L^p(\hat{\Gamma})}  \leq C.
\end{cases}
\end{align*}
As a consequence, we also have
$$\hat{\mathcal{C}}_{\hat{w}} = \frac{\hat{\mathcal{C}}_{\hat{w}_1}}{t^{\frac{1}{2n+1}}} + \frac{\hat{\mathcal{C}}_{\hat{w}_2}}{t^{\frac{2}{2n+1}}} + \cdots + \frac{\hat{\mathcal{C}}_{\hat{w}_N}}{t^{\frac{N}{2n+1}}} + \frac{\hat{\mathcal{C}}_{\hat{w}_{err}}}{t^{\frac{N+1}{2n+1}}}.$$
Since $\hat{\mu} = \sum_{j=0}^N \hat{\mathcal{C}}_{\hat{w}}^jI + (I-\hat{\mathcal{C}}_{\hat{w}})^{-1}\hat{\mathcal{C}}_{\hat{w}}^{N+1}I$, it is readily seen from the above expansion that
\begin{equation}\label{eq:muexp}
\hat{\mu}(x, t,\lambda) = I + \frac{\hat{\mu}_1(y,\lambda)}{t^{\frac{1}{2n+1}}} + \cdots
+ \frac{\hat{\mu}_N(y,\lambda)}{t^{\frac{N}{2n+1}}} + \frac{\hat{\mu}_{err}(x,t,\lambda)}{t^{\frac{N+1}{2n+1}}},
\end{equation}
where the coefficients $\{\hat{\mu}_j(y,\lambda)\}_{j=1}^N$ are smooth functions of $y \in [0,\infty)$ and
\begin{align*}
\begin{cases}
 \|\hat{\mu}_j(y,\cdot)\|_{L^2(\hat{\Gamma})} \leq C, \qquad j = 1, \dots, N,
	\\
\|\hat{\mu}_{err}(x,t,\cdot)\|_{L^2(\hat{\Gamma})} \leq C.
\end{cases}
\end{align*}
By inverting the transformations \eqref{m1def} and \eqref{mhatdef}, we obtain from \eqref{mhatrepresentation}, \eqref{eq:hatwexp} and \eqref{eq:muexp} that
\begin{align*}
& \lim_{\lambda\to \infty} \lambda(m(x,t,\lambda)  - I) = \lim_{\lambda\to \infty} \lambda( \hat m(x,t,\lambda)  - I)
\\
& = -\frac{1}{2\pi i}\int_{\hat{\Gamma}} \hat{\mu}(x, t, \lambda) \hat{w}(x, t, \lambda) d\lambda
= - \sum_{j=1}^{N} \frac{h_j(y)}{t^{\frac{j}{2n+1}}} + \Boh\big(t^{-\frac{N+1}{2n+1}}\big), \quad t\to \infty, 
\end{align*}
uniformly for $(x,t)$, where $\{h_j(y)\}_{j=1}^N$ are smooth functions with $h_1(y) = g_1(y)$. In view of \eqref{recoveru}, this leads to the asymptotic formula
\begin{align*}
u(x,t)  = 2 \lim_{\lambda\to \infty}\lambda(m(x,t,\lambda))_{21}
= \sum_{j=1}^{N} \frac{u_j(y)}{t^{\frac{j}{2n+1}}}+ \Boh\big(t^{-\frac{N+1}{2n+1}}\big),
\end{align*}
where
\begin{align}\label{ujexpressionIV}
u_j(y) = -2 (h_j)_{21}(y), \qquad j = 1, \dots, N,
\end{align}
are smooth functions of $y \in [0,\infty)$. Thus, we have proved \eqref{asymptoticsinIV}.

Since $h_1(y) = g_1(y)$, we obtain \eqref{u1expression} from \eqref{g1explicitIV} and \eqref{g1explicitII}. If it is further assumed that $r(0) = 0$, then $\hat{w}_1 = \hat{\mu}_1 = 0$, and so $h_j(y) = g_j(y)$ for $j = 1,2,3$. This, together with \eqref{ujexpressionIV} and \eqref{eq:g1}--\eqref{g1g2g3explicit}, implies \eqref{u2u3def} and \eqref{u2u3def2}. We thus complete the proof of Theorems \ref{mainth1} and \ref{mainth2} for ${\bf Case~~I}$ and ${\bf Case~~II}$.
\qed
\appendix

\section{Model RH problems}\label{sec:appen}

\subsection{The parabolic cylinder parametrix}\label{sec:para}
The parabolic cylinder parametrix $\Psi^{(\PC)}(\zeta)=\Psi^{(\PC)}(\zeta;\nu)$ with $\nu$ being a real or complex parameter is a solution of the following RH problem; see \cite[Chapter 9]{FIKN2006}.

\begin{rhp}\label{rhp:PC}
\hfill
\begin{enumerate}[label=\emph{(\alph*)}, ref=(\alph*)]
\item   $\Psi^{(\PC)}(\zeta)$ is analytic in $\mathbb{C} \setminus \{\cup^4_{j=0}\widehat\Sigma_j\cup\{0\}\}$, where the contours $\widehat\Sigma_j$, $j=0,\ldots,4,$ are indicated in Figure \ref{fig:jumps-Z}.

  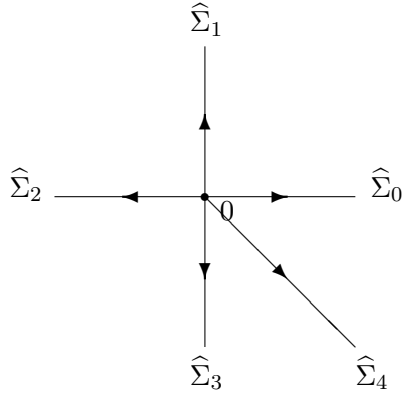
\begin{figure}[h]
\begin{center}
   \setlength{\unitlength}{1truemm}
   \begin{picture}(100,70)(-5,2)
       \put(40,40){\line(0,-1){20}}
       \put(40,40){\line(0,1){20}}
       \put(40,40){\line(-1,0){20}}
       \put(40,40){\line(1,0){20}}
      \put(40,40){\line(1,-1){20}}

       \put(40,50){\thicklines\vector(0,1){1}}
       \put(30,40){\thicklines\vector(-1,0){1}}
       \put(50,40){\thicklines\vector(1,0){1}}
       \put(40,30){\thicklines\vector(0,-1){1}}
      \put(50,30){\thicklines\vector(1,-1){1}}


       \put(42,36.9){$0$}
             \put(62,40){$\widehat \Sigma_0$}
             \put(38,62){$\widehat \Sigma_1$}
             \put(14,40){$\widehat \Sigma_2$}
             \put(38,15){$\widehat \Sigma_3$}
          \put(60,15){$ \widehat \Sigma_4$}

       \put(40,40){\thicklines\circle*{1}}
\end{picture}
   \caption{The jump contours for the RH problem \ref{rhp:PC} for $\Psi^{(\PC)}$.}
   \label{fig:jumps-Z}
\end{center}
\end{figure}

  \item $\Psi^{(\PC)}$ satisfies the following jump condition:
  \begin{equation}\label{HJumps}
  \Psi^{(\PC)}_+(\zeta)=\Psi^{(\PC)}_-(\zeta)\left\{
                         \begin{array}{ll}
                           H_j, & \hbox{$\zeta \in \widehat\Sigma_j$, $j=0,\ldots,3$,} \\
                           e^{2\nu \pi i \sigma_3}, & \hbox{$\zeta \in \widehat\Sigma_4$,}
                         \end{array}
                       \right.
  \end{equation}
  where
  \begin{equation*}
  H_0 =
  \begin{pmatrix}
    1  &  0
   \\
   h_0 &  1
  \end{pmatrix}, \quad
  H_1 = \begin{pmatrix}
    1 & h_1 \\
    0 & 1
    \end{pmatrix}, \quad
    H_{i+2}(z) = e^{i\pi(\nu+\frac12)\sigma_3}H_ie^{-i\pi(\nu+\frac12)\sigma_3}, ~~ i=0,1,
  \end{equation*}
 with
\begin{equation*}
h_0=-i\frac{\sqrt{2\pi}}{\Gamma(\nu+1)},\qquad h_1=\frac{\sqrt{2 \pi}}{\Gamma(-\nu)}e^{i\pi \nu}.
\end{equation*}

\item As $\zeta \to \infty$, we have
\begin{multline*}
 \Psi^{(\PC)}(\zeta)=
\frac{\zeta^{-\sigma_3/2}}{\sqrt{2}}
 \left(\begin{pmatrix}
1 & 1
\\
1 & -1
\end{pmatrix}+
\frac{1}{2\zeta^2}
\begin{pmatrix}
(\nu+1)(\nu+2) & -\nu(\nu-1)
\\
(\nu+1)(\nu-2) & \nu(\nu+3)
\end{pmatrix}
+\Boh(\zeta^{-4})\right)
\\
\times e^{(\frac{\zeta^2}{4}-(\nu+\frac12)\ln \zeta)\sigma_3}.
\end{multline*}
\end{enumerate}
\end{rhp}

From \cite[Section 9.4]{FIKN2006}, it follows that the above RH problem can be solved explicitly in the following way. For $z$ belonging to the region bounded by the rays $\widehat \Sigma_4$ and $\widehat \Sigma_0$,
\begin{equation*}
\Psi^{(\PC)}(\zeta)=2^{-\frac{\sigma_3}{2}}
\begin{pmatrix}
D_{-\nu-1}(i\zeta) & D_\nu(\zeta)
\\
\frac{d}{d\zeta}D_{-\nu-1}(i\zeta) & \frac{d}{d\zeta}D_\nu(\zeta)
\end{pmatrix}
\begin{pmatrix}
e^{\frac{(\nu+1)\pi}{2}i} & 0
\\
0 & 1
\end{pmatrix},
\end{equation*}
where $D_{\nu}$ stands for the parabolic cylinder function (cf. \cite[Chapter 12]{DLMF}).
The explicit formula of $\Psi^{(\PC)}(\zeta)$ in the other sectors is then determined by using the jump condition \eqref{HJumps}.

\subsection{The Model RH problem for {\bf Case I} and {\bf Case III}}\label{sec:A2}

The long-time asymptotics of $m$ in {\bf Case I} and {\bf Case III} is related to the solution $m^{\Upsilon^*}$ of the following model RH problem parameterized by $(y,t)$.
\begin{rhp}\label{RHmYIV}
\hfill
\begin{enumerate}[label=\emph{(\alph*)}, ref=(\alph*)]
\item
$m^{\Upsilon^*}(y, t, \cdot) \in I + \dot{E}^2(\C \setminus \Upsilon^*)$, where
$$\Upsilon^*:=\Upsilon_1\cup\Upsilon_{2n+1}^*\cup\Upsilon_{2n+2}^*\cup\Upsilon_{4n+2},$$
and where the contours $\Upsilon_1,\Upsilon_{2n+1}^*,\Upsilon_{2n+2}^*$ and $\Upsilon_{4n+2}$ are illustrated in Figure \ref{criticalpointsfig}.
\item
For a.e. $\lambda \in \Upsilon^*$, we have
\begin{equation*}
m_+^{\Upsilon^*}(y, t, z) =  m_-^{\Upsilon^*}(y, t, z) v^{\Upsilon^*}(y, t, z),
\end{equation*}
where
\begin{equation}\label{vYdefIV}
v^{\Upsilon^*}(y, t, z) = \begin{cases}
 \begin{pmatrix}
 1	& 0 \\
p_N(t, z)e^{2i\left(y z + \frac{(2z)^{2n+1}}{4n+2}\right)}  & 1
\end{pmatrix}, & z \in \Upsilon_1\cup \Upsilon_{2n+1}^*,
	\\
\begin{pmatrix} 1 & -p_N^*(t, z)e^{-2i\left(y z + \frac{(2z)^{2n+1}}{4n+2}\right)}	
\\
0	& 1
\end{pmatrix}, &  z \in \Upsilon_{2n+2}^*\cup\Upsilon_{4n+2},
\end{cases}
\end{equation}
and  where
\begin{align}\label{psumIV}
p_N(t,z) = s + \sum_{j=1}^N \frac{p_{j}z^j}{t^{\frac{j}{2n+1}}}, \qquad n\in \N,
\end{align}
is a polynomial of degree $N$ in $z t^{-1/(2n+1)}$ with coefficients $s =  i\rho $, $-1<\rho<1$, and $\{p_j\}_1^N \subset \C$.
\end{enumerate}
\end{rhp}

Note that if $p_j$, $j=1,\ldots, N$, in \eqref{psumIV} all vanishes, the above RH problem reduces to the RH problem \ref{rhp:psi} for the Painlev\'e II hierarchy, up to a conjugation. For general $p_j$, we have the following lemma about the properties of $m^{\Upsilon^*}$.

\begin{lemma}\label{YlemmaIV}
There exists a suitable $T$ such that the RH problem \ref{RHmYIV} has a unique solution $m^{\Upsilon^*}(y, t, z)$ whenever $y \geq 0$ and $t \geq T$. Moreover, the following properties of $m^{\Upsilon^*}$ hold.
\begin{enumerate}[label=\emph{(\alph*)}, ref=(\alph*)]
\item For each integer $N \geq 1$, there are smooth functions $\{\Phi_{jl}^{\Upsilon^*}(y)\}$ of $y \in [0,\infty)$ such that, as $z\to \infty$,
\begin{equation}\label{mYasymptoticsIV}
 m^{\Upsilon^*}(y, t, z) = I + \sum_{j=1}^N \sum_{l=0}^N \frac{\Phi_{jl}^{\Upsilon^*}(y)}{z^j t^{\frac{l}{2n+1}}}  + \Boh \left(\frac{t^{-\frac{N+1}{2n+1}}}{|z|} + \frac{1}{|z|^{N+1}}\right),
\end{equation}
uniformly with respect to $\arg z \in [0, 2\pi]$, $y \geq 0$, and $t \geq T$.

\item $m^{\Upsilon^*}$ is uniformly bounded, i.e.,
\begin{align}\label{mYboundedIV}
\sup_{y \geq 0} \sup_{t \geq T} \sup_{z \in \C\setminus \Upsilon^*} |m^{\Upsilon^*}(y, t, z)|  < \infty,
\end{align}
and satisfies the symmetry relation
\begin{align}\label{mYsymmIV}
m^{\Upsilon^*}(y, t, z) = \sigma_1\overline{m^{\Upsilon^*}(y, t, \bar{z})} \sigma_1.
\end{align}
If $p_N(t,z) = -\overline{p_N(t,-\bar{z})}$, we also have
\begin{align}\label{mYsymmII}
m^{\Upsilon^*}(y, t, z) = \sigma_1\sigma_3m^{\Upsilon^*}(y, t, -z) \sigma_3\sigma_1.
\end{align}

\item For the leading coefficient in \eqref{mYasymptoticsIV}, we have
\begin{align}\label{m10Yexplicit}
(\Phi_{10}^{\Upsilon^*})_{12}(y) = (\Phi_{10}^{\Upsilon^*})_{21}(y)= -\frac{1}{2}q_{AS,n}((-1)^{n+1}y;\rho),
\end{align}
where $q_{\AS,n}$ is the generalized Ablowitz-Segur solution of the Painlev\'{e} II hierarchy \eqref{def:PIIhierar} as stated in Theorem \ref{prop:connection}. Furthermore, if $s = 0$, $p_1 \in \R$, and $p_2 \in i\R$, the first few terms in (\ref{mYasymptoticsIV}) are given by
\begin{align}\label{eq:phi10}
& \Phi_{10}^{\Upsilon^*}(y) = 0,
	\\
& \Phi_{11}^{\Upsilon^*}(y) = \frac{p_1}{4}\Ai_{2n+1}'(y)\sigma_1, \label{eq:phi11}
	\\
& \Phi_{12}^{\Upsilon^*}(y) = \frac{p_1^2}{8i} \bigg(\int_y^{\infty} (\Ai_{2n+1}'(y'))^2dy'\bigg) \sigma_3
+ \frac{p_2}{8i} \Ai_{2n+1}''(y) \sigma_1, \label{eq:phi12}
	\\ \label{mYexplicitIV}
& \Phi_{21}^{\Upsilon^*}(y) = -\frac{p_1}{8i}\Ai_{2n+1}''(y) \sigma_3\sigma_1.
\end{align}
\end{enumerate}
\end{lemma}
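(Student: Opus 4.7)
My plan is to treat existence and uniqueness, symmetries, and the asymptotic expansion in that order, and then extract the explicit coefficients by an iterative perturbation scheme.

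\textbf{Existence, uniqueness, and integral representation.} When $s=i\rho$ and all $p_j=0$, RH problem \ref{RHmYIV} coincides, up to the rescaling $\zeta = z/((2n+1)t)^{1/(2n+1)}$ and a trivial conjugation, with the RH problem \ref{rhp:psi} for the Painlev\'e II hierarchy (with phase $\Xi(\zeta)$ matching $yz+(2z)^{2n+1}/(4n+2)$), whose solvability for $-1<\rho<1$ is guaranteed by \cite{CCG2019}. For general polynomial $p_N$, I would factor $m^{\Upsilon^*}(y,t,z)=\widetilde m(y,t,z)\,\Psi_{\mathrm{resc}}(y,t,z)$, with $\Psi_{\mathrm{resc}}$ the rescaled Painlev\'e II solution; the jump for $\widetilde m$ then differs from the identity only by terms of order $t^{-1/(2n+1)}$, so the standard small-norm argument (as in the $R$-analysis of Section \ref{sec:global}) provides existence, uniqueness, and the representation
\begin{equation*}
m^{\Upsilon^*}(y,t,z) = I + \frac{1}{2\pi i}\int_{\Upsilon^*}\mu(y,t,s)\bigl(v^{\Upsilon^*}(y,t,s)-I\bigr)\frac{ds}{s-z},
\end{equation*}
with $\mu=I+(I-\mathcal{C}_{v^{\Upsilon^*}-I})^{-1}\mathcal{C}_{v^{\Upsilon^*}-I}I$, for all $t\geq T$ and $y\geq 0$. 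The uniform bound \eqref{mYboundedIV} follows from boundedness of $\mu$ in $L^2$ and a local analysis at the origin. The symmetries \eqref{mYsymmIV} and \eqref{mYsymmII} are pure uniqueness arguments: under $m\mapsto \sigma_1\overline{m(\bar z)}\sigma_1$ the contour $\Upsilon^*$ and the jump \eqref{vYdefIV} are preserved because $s\in i\R$ and $p_N^*(t,z)=\overline{p_N(t,\bar z)}$; the analogous check using $p_N(t,z)=-\overline{p_N(t,-\bar z)}$ gives \eqref{mYsymmII}.

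\textbf{Asymptotic expansion.} On the non-real rays of $\Upsilon^*$ one has $\re[\,i(yz+(2z)^{2n+1}/(4n+2))\,]\leq -c|z|^{2n+1}$ for $y\geq 0$, so $v^{\Upsilon^*}-I$ decays super-exponentially in $|z|$; combined with the polynomial dependence of $p_N$ on $t^{-1/(2n+1)}$, expanding successively $v^{\Upsilon^*}$, then $\mu$ through its Neumann series, and finally the Cauchy kernel $1/(s-z)$ for large $|z|$, produces the double series in \eqref{mYasymptoticsIV}; smooth dependence on $y$ transfers from the jump data through the resolvent. The leading coefficient \eqref{m10Yexplicit} is identified by matching the $1/z$ term of $m^{\Upsilon^*}$ against the $1/\zeta$ term of $\Psi$ after rescaling, and invoking the definition \eqref{eq:qPsi} of $q_{\AS,n}$ together with the symmetry \eqref{eq:qsym}.

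\textbf{Explicit coefficients when $s=0$.} In this case the unperturbed problem has the trivial solution $m^{\Upsilon^*}\equiv I$, so I would insert the ansatz
\begin{equation*}
m^{\Upsilon^*}(y,t,z) = I + \sum_{k\geq 1}\frac{M^{(k)}(y,z)}{t^{k/(2n+1)}}
\end{equation*}
into the integral representation and expand, obtaining a triangular iterative system of additive RH problems for the $M^{(k)}$. For $M^{(1)}$ the jumps are purely off-diagonal and equal $\pm p_1 z\,e^{\pm 2i(yz+(2z)^{2n+1}/(4n+2))}$; the Cauchy-integral solution, after deforming the four rays of $\Upsilon^*$ to the single curve $\gamma$ appearing in \eqref{def:kAiry2}, evaluates explicitly to $\Ai'_{2n+1}(y)$, yielding \eqref{eq:phi11}. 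The same mechanism gives the off-diagonal part of \eqref{eq:phi12} and the formula \eqref{mYexplicitIV}; the diagonal contribution to $\Phi_{12}^{\Upsilon^*}$, containing $\int_y^\infty(\Ai'_{2n+1}(y'))^2\,dy'$, arises from the quadratic Neumann term $\mathcal{C}_{v-I}^2 I$ coupling $M^{(1)}$ to itself, which one evaluates by differentiating under the integral and integrating in $y$.

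\textbf{Main obstacle.} The principal technical difficulty is the last explicit evaluation: verifying that the iterated Cauchy integrals on the $(2n+1)$-fold symmetric contour $\Upsilon^*$ collapse, after contour deformation, to the precise combinations of $\Ai_{2n+1}$ and its derivatives asserted in \eqref{eq:phi10}--\eqref{mYexplicitIV}. This requires systematic use of the integration-by-parts identity $\int_\gamma z^k e^{(-1)^n s^{2n+1}/(2n+1)+ys}ds\propto \Ai^{(k)}_{2n+1}(y)$, together with careful bookkeeping of signs coming from the reversed orientations of $\Upsilon_{2n+1}^*$ and $\Upsilon_{2n+2}^*$, and of the cancellations that produce the diagonal $\sigma_3$ term in $\Phi_{12}^{\Upsilon^*}$ rather than spurious off-diagonal pieces.
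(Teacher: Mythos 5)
Your high-level approach matches the paper's: factor out the (conjugated) Painlev\'e II hierarchy solution, which you call $\Psi_{\mathrm{resc}}$ and the paper calls $m^P$, so that the remaining RH problem for $\widetilde m$ has a jump within $\Boh(t^{-1/(2n+1)})$ of the identity and is amenable to small-norm analysis; then recover the full expansion of $m^{\Upsilon^*}=\widetilde m\,\Psi_{\mathrm{resc}}$. The identification of the leading coefficient via the $1/z$-term of $\Psi$ and the integration-by-parts identity $\int z^k e^{2i(yz+(2z)^{2n+1}/(4n+2))}dz\propto \Ai^{(k)}_{2n+1}(y)$ for the $s=0$ explicit computations are also the same as in the paper, including the Fubini step for the diagonal $\sigma_3$ piece of $\Phi_{12}^{\Upsilon^*}$.

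However, the concrete machinery you write down for the asymptotic expansion is applied to the wrong RH problem and would not converge. You state the Beals--Coifman representation
\begin{equation*}
m^{\Upsilon^*}(y,t,z) = I + \frac{1}{2\pi i}\int_{\Upsilon^*}\mu(y,t,s)\bigl(v^{\Upsilon^*}(y,t,s)-I\bigr)\frac{ds}{s-z},\qquad
\mu=I+(I-\mathcal{C}_{v^{\Upsilon^*}-I})^{-1}\mathcal{C}_{v^{\Upsilon^*}-I}I,
\end{equation*}
and then propose to expand $\mu$ ``through its Neumann series.'' But $v^{\Upsilon^*}-I$ is \emph{not} small: its leading piece contains $i\rho e^{\pm 2i(yz+\ldots)}$, so $\|\mathcal{C}_{v^{\Upsilon^*}-I}\|_{\mathcal{B}(L^2(\Upsilon^*))}$ is of order $|\rho|$, and the Neumann series $\sum_j \mathcal{C}_{v^{\Upsilon^*}-I}^j I$ does not converge. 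The small-norm expansion must be carried out for $\widetilde m$ with jump datum
$\widetilde w=m_-^P\bigl(v^{\Upsilon^*}-v^P\bigr)(m_+^P)^{-1}=\Boh(t^{-1/(2n+1)})$,
which is precisely the paper's $\hat w^{\Upsilon^*}$ in \eqref{wYexpansionIV}--\eqref{wjYIV}; the Neumann series and Cauchy-kernel expansion are convergent there, and the double sum in \eqref{mYasymptoticsIV} is then obtained by multiplying the resulting expansion of $\widetilde m$ by the large-$z$ expansion of $m^P$ from \eqref{eq:asymp}. In particular, the coefficients $\Phi_{j0}^{\Upsilon^*}=m_j^P$ with $l=0$ come entirely from $m^P$, a step your sketch does not address. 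Your direct computation succeeds by coincidence in the case $s=0$ (where $m^P\equiv I$ and $v^{\Upsilon^*}-I=\widetilde w$), which is where you carry it out explicitly, but the general expansion \eqref{mYasymptoticsIV} as written would fail.
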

\begin{proof}
We follow the strategy in \cite{CL-2019}. Define
\begin{align}\label{def:mp}
m^{P}(z) := \begin{pmatrix}1&0\\0&i\end{pmatrix}\Psi(y,\rho,z)e^{i \Xi(y,z) \sigma_3}
\begin{pmatrix}
1&0
\\
0&i
\end{pmatrix}^{-1},
\end{align}
where $\Psi$ solves the RH problem \ref{rhp:psi} and  $\Xi$ is given in \eqref{def:Xi}. It is then readily seen that $m^{P}$ satisfies the following RH problem.
\begin{rhp}\label{rhp:mp}
\hfill
\begin{enumerate}[label=\emph{(\alph*)}, ref=(\alph*)]
\item $m^{P}(z)=m^{P}(y, \rho,z)$ is defined and analytic in $\mathbb{C} \setminus \Upsilon^* $.
\item For $z \in \Upsilon^*$, we have
\begin{equation}\label{jumps:psi}
m^{P}_{+}(z) = m^{P}_{-}(z)v^{P},
\end{equation}
where
\begin{equation}\label{def:JmP}
v^{P}(z):= \left\{
        \begin{array}{ll}
          \begin{pmatrix} 1 & 0 \\ i \rho e^{2i\left(y z + \frac{(2z)^{2n+1}}{4n+2}\right)} & 1 \end{pmatrix}, & \qquad \hbox{$z \in \Upsilon_1\cup\Upsilon^{*}_{2n+1} $,} \\
          \begin{pmatrix} 1 & i\rho e^{-2i\left(y z + \frac{(2z)^{2n+1}}{4n+2}\right)} \\ 0 & 1 \end{pmatrix}, & \qquad \hbox{$z \in \Upsilon^{*}_{2n+2}\cup \Upsilon_{4n+2}$.}
        \end{array}
      \right.
\end{equation}
\item As $z\to \infty$, we have
\begin{equation}\label{eq:asymp}
 m^{P}(z)= I+ \sum_{j=1}^N\frac{m^{P}_j(y)}{z^j} +\mathcal O(z^{-N-1}),
\end{equation}
where $m^{P}_j=\begin{pmatrix}1&0\\0&i\end{pmatrix}\Psi_{j}\begin{pmatrix}1&0\\0&i\end{pmatrix}^{-1}$ with $\Psi_j$ given in \eqref{mPasymptotics} for $j=1,\ldots,N$.

\item $ m^{P}(z)$ is bounded near the origin.
\end{enumerate}
\end{rhp}

By \eqref{def:mp} and \eqref{eq:qPsi}, we have
 \begin{equation}\label{eq:qmP}
q_{\AS,n}(y;\rho):=\left\{
             \begin{array}{ll}
               -2( m^{P}_1)_{12}(y)=-2( m^{P}_1)_{21}(y), & \hbox{$n$ odd,} \\
               -2( m^{P}_1)_{12}(-y)=-2( m^{P}_1)_{21}(-y), & \hbox{$n$ even,}
             \end{array}
           \right.
\end{equation}
and
\begin{align}\label{mPbounded}
\sup_{y \geq -C_1} \sup_{z \in \C\setminus \Upsilon^*} |m^P(y,z)|  < \infty.
\end{align}
for each $C_1>0$.

It is straightforward to check that the matrix-valued function $m^{\Upsilon^*}$ satisfies RH problem \ref{RHmYIV} if and only if
\begin{align}
\label{def:mhat}
\hat{m}^{\Upsilon^*} := m^{\Upsilon^*} (m^{{P}})^{-1}
\end{align}
satisfies the following RH problem.
\begin{rhp}\label{RHmYhatIV}
\hfill
\begin{enumerate}[label=\emph{(\alph*)}, ref=(\alph*)]
\item
$\hat{m}^{\Upsilon^*} (y, t, \cdot) \in I + \dot{E}^2(\C \setminus \Upsilon^*)$.
\item
For a.e. $\lambda \in \Upsilon^*$, we have
\begin{equation*}
m_+^{\Upsilon^*}(y, t, z) =  m_-^{\Upsilon^*}(y, t, z) \hat{v}^{\Upsilon^*} (y, t, z),
\end{equation*}
where
$$\hat{v}^{\Upsilon^*}=m_{-}^{P} v^{\Upsilon^*} (m_+^{P})^{-1},$$
and where $v^{\Upsilon^*}$ is given in \eqref{vYdefIV}.
\end{enumerate}
\end{rhp}

By setting
$$\hat{w}^{\Upsilon^*}  := \hat{v}^{\Upsilon^*}  - I= m_{-}^{P} (v^{\Upsilon^*}  - v^P)  (m_+^{P})^{-1},$$
we have
\begin{align}\label{wYexpansionIV}
\hat{w}^{\Upsilon^*}(y, t, z) = \frac{\hat{w}_1^{\Upsilon^*}(y,z)}{t^{\frac{1}{2n+1}}} + \cdots + \frac{\hat{w}_{{N}}^{\Upsilon^*}(y,z)}{t^{\frac{N}{2n+1}}},
\end{align}
where, for $j=1,\ldots,N$,
\begin{align}\label{wjYIV}
\hat{w}_j^{\Upsilon^*}(y, z) = \begin{cases}
m_-^P \begin{pmatrix}
 0	& 0 \\
p_{j}z^je^{2i\left(y z + \frac{(2z)^{2n+1}}{4n+2}\right)}  & 0
\end{pmatrix}(m_+^P)^{-1}, &  z \in \Upsilon_1 \cup \Upsilon^{*}_{2n+1},
	\\
m_-^P\begin{pmatrix} 0 & - {p}^*_{j}z^je^{-2i\left(y z + \frac{(2z)^{2n+1}}{4n+2}\right)} 	\\
0	& 0
\end{pmatrix}(m_+^P)^{-1}, &   z \in \Upsilon^{*}_{2n+2} \cup \Upsilon_{4n+2}.
\end{cases}
\end{align}
Since $y\geq 0$, it follows that
\begin{align*}
\begin{cases}
\left|e^{ 2i\left(y z + \frac{(2z)^{2n+1}}{4n+2}\right)}\right|
\leq e^{-\frac{2^{2n+1}}{2n+1}|z|^{2n+1}},\quad & z \in \Upsilon_1 \cup  \Upsilon^{*}_{2n+1}, \\
\left|e^{-2i\left(y z + \frac{(2z)^{2n+1}}{4n+2}\right)}\right|
\leq e^{-\frac{2^{2n+1}}{2n+1}|z|^{2n+1}},\quad & z \in \Upsilon^{*}_{2n+2} \cup  \Upsilon_{4n+2}.
\end{cases}
\end{align*}
This, together with \eqref{wjYIV}, implies that for any integer $m\geq 0$,
\begin{align*}
|z^m\hat{w}_j^{\Upsilon^*}(y,z)| \leq &\; C |z|^{m+j} e^{-\frac{2^{2n+1}}{2n+1}|z|^{2n+1}} \leq C(n) e^{-c(n) |z|^{2n+1}}, \quad z \in \Upsilon^*,
\end{align*}
and hence, for any $1 \leq p \leq \infty$,
\begin{align}\label{wYLpestIV}
\begin{cases}
\|z^m\hat{w}_j^{\Upsilon^*}(y,z)\|_{L^p(\Upsilon^*)} \leq C,
	\\
\|z^m\hat{w}^{\Upsilon^*}(y,t,z)\|_{L^p(\Upsilon^*)} \leq Ct^{-\frac{1}{2n+1}}, \quad & t \geq 1.
\end{cases}
\end{align}
We then particularly have
\begin{equation*}\|\mathcal{C}_{\hat{w}^{\Upsilon^*}(y,t,\cdot)}^{\Upsilon^*}\|_{\mathcal{B}(L^2(\Upsilon^*)}
\leq C \|\hat{w}^{\Upsilon^*}\|_{L^\infty(\Upsilon^*)}, \quad t \geq 1.
\end{equation*}
Therefore, 
there exists a suitable $T \geq 1$ such that the RH problem \ref{RHmYhatIV} has a unique solution which is given by
\begin{align}
\hat{m}^{\Upsilon^*}(y, t, z) &= I + \mathcal{C}^{\Upsilon^*}(\hat{\mu}^{\Upsilon^*} \hat{w}^{\Upsilon^*})\nonumber\\
\label{mYrepresentationIV}
 &= I + \frac{1}{2\pi i}\int_{\Upsilon^*} (\hat{\mu}^{\Upsilon^*} \hat{w}^{\Upsilon^*})(y, t, s) \frac{ds}{s - z}, \quad t \geq T,
\end{align}
where $\hat{\mu}^{\Upsilon^*}(y, t, \cdot) \in I + L^2(\Upsilon^*)$ is defined by
\begin{align}\label{muYdefIV}
\hat{\mu}^{\Upsilon^*} = I +\mathcal{C}^{\Upsilon^*}_{\hat{w}^{\Upsilon^*}}\hat{\mu}^{\Upsilon^*}
=I + (I - \mathcal{C}^{\Upsilon^*}_{\hat{w}^{\Upsilon^*}})^{-1}\mathcal{C}^{\Upsilon^*}_{\hat{w}^{\Upsilon^*}}I.
\end{align}
Note that
$$\mathcal{C}^{\Upsilon^*}_{\hat{w}^{\Upsilon^*}} = \frac{\mathcal{C}^{\Upsilon^*}_{\hat{w}_1^{\Upsilon^*}}}{t^{\frac{1}{2n+1}}} + \frac{\mathcal{C}^{\Upsilon^*}_{\hat{w}_2^{\Upsilon^*}}}{t^{\frac{2}{2n+1}}} + \cdots + \frac{\mathcal{C}^{\Upsilon^*}_{\hat{w}_{ N}^{\Upsilon^*}}}{t^{\frac{N}{2n+1}}},\qquad N\in\N, $$
it follows from \eqref{muYdefIV} that
\begin{align}\nonumber
\hat{\mu}^{\Upsilon^*}(y, t,z) & = \sum_{r=0}^{N} (\mathcal{C}^{\Upsilon^*}_{\hat{w}^{\Upsilon^*}})^rI
+ (I - \mathcal{C}^{\Upsilon^*}_{\hat{w}^{\Upsilon^*}})^{-1}(\mathcal{C}^{\Upsilon^*}_{\hat{w}^{\Upsilon^*}})^{N+1}I	
	\\\label{muYexpansionIV}
& = I + \sum_{j=1}^N \frac{\hat{\mu}_j^{\Upsilon^*}(y,z)}{t^{\frac{j}{2n+1}}} + \frac{\hat{\mu}_{err}^{\Upsilon^*}(y,t,z)}{t^{\frac{N+1}{2n+1}}},
\end{align}
where  $\hat{\mu}_{j}^{\Upsilon^*}$ is a sum of terms of the form $\prod_{i=1}^{r}\mathcal{C}^{\Upsilon^*}_{\hat{w}_{j_i}^{\Upsilon^*}}I$
with $\Sigma_{i=1}^{r}j_i = j$, and the $\hat{\mu}_{err}^{\Upsilon^*}$ involves terms of the same form (but with $j_1 + \cdots j_r \geq N+1$).  By (\ref{wYLpestIV}),
it follows that
\begin{align}\label{mujYestIV}
\begin{cases}
 \|\hat{\mu}_j^{\Upsilon^*}(y,\cdot)\|_{L^2(\Upsilon^*)} \leq C, & j = 1, \dots, N,
	\\
 \|\hat{\mu}_{err}^{\Upsilon^*}(y,t,\cdot)\|_{L^2(\Upsilon^*)} \leq C, \quad & t \geq T,
 \end{cases}
\end{align}
for some $C>0$.

We now plug \eqref{wYexpansionIV} and \eqref{muYexpansionIV} back into \eqref{mYrepresentationIV}, it is readily seen from  \eqref{wYLpestIV} and  \eqref{mujYestIV} that, as $z\to \infty$,
\begin{align}\nonumber
\hat{m}^{\Upsilon^*}(y, t, z) = &\; I - \sum_{j=1}^{N} \frac{1}{2\pi i z^j} \bigg\{\sum_{l=1}^N t^{-\frac{l}{2n+1}} \int_{\Upsilon^*} s^{j-1} \bigg(\hat{w}_l^{\Upsilon^*} + \sum_{i=1}^{l-1} \hat{\mu}_{l-i}^{\Upsilon^*} \hat{w}_i^{\Upsilon^*} \bigg)ds
 + \Boh\big(t^{-\frac{N+1}{2n+1}}\big)\bigg\}
	\\\label{mYexpansionIV}
& + \Boh\big(|z|^{-N-1}t^{-\frac{1}{2n+1}}\big),
\end{align}
uniformly for $y \geq 0$ and $t \geq T$. Denote by
\begin{equation}\label{def:hatphi}
\hat{\Phi}_{jl}^{\Upsilon^*}(y) = - \frac{1}{2\pi i} \int_{\Upsilon^*} s^{j-1} \bigg(\hat{w}_l^{\Upsilon^*} + \sum_{i=1}^{l-1}\hat{\mu}_{l-i}^{\Upsilon^*} \hat{w}_i^{\Upsilon^*} \bigg)(y,s)ds, \qquad 1 \leq j,l \leq N.
\end{equation}
we can rewrite \eqref{mYexpansionIV} as

\begin{align}\label{mhatYasymptoticsIV}
\hat{m}^{\Upsilon^*}(y, t, z) = I +\sum_{j=1}^N \sum_{l=1}^N \frac{\hat{\Phi}_{jl}^{\Upsilon^*}(y)}{z^j t^{\frac{l}{2n+1}}}+\Boh \biggl(\frac{t^{-\frac{N+1}{2n+1}}}{|z|} + \frac{t^{-\frac{1}{2n+1}}}{|z|^{N+1}}\biggr),
\end{align}
uniformly with respect to $\arg z \in [0, 2\pi]$, $y \geq 0$ and $t \geq T$. Moreover, from \eqref{def:hatphi}, \eqref{wjYIV} and \eqref{muYexpansionIV}, we obtain the smoothness of $\hat{\Phi}_{jl}^{\Upsilon^*}(y)$. A combination of (\ref{eq:asymp}), \eqref{def:mhat} and  \eqref{mhatYasymptoticsIV} gives us \eqref{mYasymptoticsIV} with
\begin{equation}\label{eq:phij0mj}
\Phi_{j0}^{\Upsilon^*}(y)=m_j^P(z), \qquad j=1,\ldots,N.
\end{equation}
The bound (\ref{mYboundedIV}) follows from \eqref{mPbounded}, \eqref{mhatYasymptoticsIV} and the fact that the contour can be deformed.
The symmetries \eqref{mYsymmIV} and \eqref{mYsymmII} follow from the analogous symmetries for the jump $v^{\Upsilon^*}$.

We finally prove Item (c) of the lemma. The relation \eqref{m10Yexplicit} follows directly from \eqref{eq:qmP} and \eqref{eq:phij0mj}. If $s = 0$, $p_1 \in \R$, and $p_2 \in i\R$, it is easily seen from \eqref{def:mp} and Remark \ref{rk:PII} that $m^P \equiv I$, and by \eqref{wjYIV}, we have
\begin{align}\label{w2YIV1}
\hat{w}_1^{\Upsilon^*} &= \begin{pmatrix}
 0	& -p_1ze^{-2i\left(y z + \frac{(2z)^{2n+1}}{4n+2}\right)} 1_{\Upsilon_{2n+2}^{*} \cup \Upsilon_{4n+2}}(z) \\
p_1z e^{2i\left(y z + \frac{(2z)^{2n+1}}{4n+2}\right)} 1_{ \Upsilon_{1}\cup \Upsilon_{2n+1}^{*}}(z) & 0
\end{pmatrix},
	\\ \label{w2YIV}
\hat{w}_2^{\Upsilon^*} &= \begin{pmatrix}
 0	& p_2z^2e^{-2i\left(y z + \frac{(2z)^{2n+1}}{4n+2}\right)} 1_{\Upsilon_{2n+2}^{*} \cup \Upsilon_{4n+2}}(z)  \\
p_2z^2e^{2i\left(y z + \frac{(2z)^{2n+1}}{4n+2}\right)} 1_{ \Upsilon_{1}\cup\Upsilon^{*}_{2n+1}}(z) & 0
\end{pmatrix},
\end{align}
where $1_{A}(z)$ denotes the characteristic function of the set $A \subset \C$. In view of the function $\Ai_{2n+1}(y)$ defined in \eqref{def:kAiry2}, a change of variable shows that
\begin{align*}
\int_{\Upsilon_{1}\cup\Upsilon_{2n+1}^{*} }  e^{2i\left(y z + \frac{(2z)^{2n+1}}{4n+2}\right)} dz
= \int_{\Upsilon_{2n+2}^{*}\cup \Upsilon_{4n+2}} e^{-2i\left(y z + \frac{(2z)^{2n+1}}{4n+2}\right)} dz
= \pi \Ai_{2n+1}(y).
\end{align*}
By differentiating the above formula $j$ times with respect to $y$, it follows that
\begin{multline}\label{intAiryIV}
\int_{\Upsilon_{1}\cup\Upsilon_{2n+1}^{*} } z^j e^{2i\left(y z + \frac{(2z)^{2n+1}}{4n+2}\right)} dz
= (-1)^j \int_{\Upsilon_{2n+2}^{*} \cup \Upsilon_{4n+2}} z^j e^{-2i\left(y z + \frac{(2z)^{2n+1}}{4n+2}\right)} dz
\\
= \frac{\pi \Ai_{2n+1}^{(j)}(y)}{(2i)^j},
\end{multline}
for each integer $j \geq 0$. A combination of \eqref{def:hatphi} and \eqref{eq:phij0mj}--\eqref{intAiryIV} then implies that
\begin{align*}
\Phi_{10}^{\Upsilon^*}(y) &=0,
\\
 \Phi_{11}^{\Upsilon^*}(y) &=\hat{\Phi}_{11}^{\Upsilon^*}(y)=  -\frac{1}{2\pi i} \int_{\Upsilon^*} \hat{w}_1^{\Upsilon^*} dz
	\\
&=  - \frac{p_1}{2\pi i} \begin{pmatrix} 0 & - \int_{\Upsilon_{2n+2}^{*} \cup \Upsilon_{4n+2}}  z e^{-2i\left(y z + \frac{(2z)^{2n+1}}{4n+2}\right)} dz  \\
 \int_{ \Upsilon_{1}\cup\Upsilon_{2n+1}^{*}} z e^{2i\left(y z + \frac{(2z)^{2n+1}}{4n+2}\right)} dz & 0
 \end{pmatrix}
	\\
&=   -\frac{p_1}{2\pi i} \frac{\pi \Ai'_{2n+1}(y)}{2i}
\begin{pmatrix} 0 & 1   \\
1 & 0  \end{pmatrix},
	\\
\Phi_{21}^{\Upsilon^*}(y) &= \hat{\Phi}_{21}^{\Upsilon^*}(y)=  -\frac{1}{2\pi i}\int_{\Upsilon^*} z \hat{w}_1^{\Upsilon^*}  dz
	\\
&=  -\frac{p_1}{2\pi i} \begin{pmatrix} 0 & -\int_{\Upsilon_{2n+2} ^{*}\cup \Upsilon_{4n+2}}  z^2 e^{-2i\left(y z + \frac{(2z)^{2n+1}}{4n+2}\right)} dz   \\
\int_{ \Upsilon_{1}\cup\Upsilon_{2n+1}^{*}} z^2 e^{2i\left(y z + \frac{(2z)^{2n+1}}{4n+2}\right)} dz & 0 \end{pmatrix}
	\\
&= \frac{p_1}{2\pi i} \frac{\pi \Ai_{2n+1}''(y)}{4}  \begin{pmatrix} 0 & -1   \\
1 & 0  \end{pmatrix},
\end{align*}
which are \eqref{eq:phi10}, \eqref{eq:phi11} and \eqref{mYexplicitIV}. It remains to show \eqref{eq:phi12}. By \eqref{def:hatphi} and \eqref{muYexpansionIV}, we have
\begin{equation}\label{eq:phi12u*}
\Phi_{12}^{\Upsilon^*}(y) =  -\frac{1}{2\pi i}\int_{\Upsilon^*} (\hat w_2^{\Upsilon^*}(y,z) + \hat\mu_1^{\Upsilon^*}(y,z) \hat w_1^{\Upsilon^*}(y,z)) dz,
\end{equation}
and
\begin{align*}
\hat \mu_1^{\Upsilon^*}(y,z) & = \mathcal{C}^{\Upsilon^*}_{\hat w_1^{\Upsilon^*}}I
= \mathcal{C}^{\Upsilon^*}_-(\hat w_1^{\Upsilon^*})
= \frac{1}{2\pi i}\int_{\Upsilon^*} \frac{\hat w_1^{\Upsilon^*}(y,s)ds}{s-z_-}
	\\
& = \frac{p_1}{2\pi i} \begin{pmatrix}0 & -\int_{\Upsilon_{2n+2}^{*} \cup \Upsilon_{4n+2}} \frac{se^{-2i\left(y z + \frac{(2z)^{2n+1}}{4n+2}\right)}}{s-z_-}ds \\
\int_{\Upsilon_{1}\cup\Upsilon_{2n+1}^{*} } \frac{se^{2i\left(y z + \frac{(2z)^{2n+1}}{4n+2}\right)}}{s-z_-}ds & 0
\end{pmatrix}.
\end{align*}
We now define
\begin{align*}
F(y) = \int_{\Upsilon^*} \hat \mu_1^{\Upsilon^*} \hat w_1^{\Upsilon^*} dz
= -\frac{p_1^2}{2\pi i} \begin{pmatrix} F_{1}(y) & 0 \\
0 & F_{2}(y)
\end{pmatrix},
\end{align*}
where
\begin{align*}
 F_1(y) &= \int_{ \Upsilon_{1}\cup\Upsilon_{2n+1}^{*}} \bigg(  \int_{\Upsilon_{2n+2}^{*} \cup \Upsilon_{4n+2}}  \frac{sze^{-2i\left(y s  + \frac{(2s)^{2n+1}}{4n+2}\right)}e^{2i\left(y z  + \frac{(2z)^{2n+1}}{4n+2}\right)}}{s-z} ds \bigg)dz,
	\\
 F_2(y) &= \int_{\Upsilon_{2n+2}^{*} \cup \Upsilon_{4n+2}} \bigg( \int_{ \Upsilon_{1}\cup\Upsilon_{2n+1}^{*}} \frac{sze^{2i\left(y s  + \frac{(2s)^{2n+1}}{4n+2}\right)}e^{-2i\left(y z  + \frac{(2z)^{2n+1}}{4n+2}\right)}}{s-z} ds \bigg) dz.
\end{align*}
Since $F_2(y) = -F_1(y)$ (by Fubini's theorem), it follows that
\begin{align*}
F(y) = -\frac{p_1^2}{2\pi i}
F_{1}(y)\sigma_3.
\end{align*}
Differentiating both sides of the above two equalities with respect to $y$, we observe from
\eqref{intAiryIV} that
\begin{align*}
F'(y) & = \frac{p_1^2}{\pi}
\bigg(\int_{ \Upsilon_{1}\cup\Upsilon_{2n+1}^{*}} ze^{2i\left(y z  + \frac{(2z)^{2n+1}}{4n+2}\right)} dz\bigg)\bigg( \int_{\Upsilon_{2n+2}^{*} \cup \Upsilon_{4n+2}}  s e^{-2i\left(y s  + \frac{(2s)^{2n+1}}{4n+2}\right)} ds\bigg)\sigma_3
	\\
& = \frac{p_1^2}{\pi}
\bigg(\frac{\pi \Ai_{2n+1}'(y)}{2i}\bigg)\bigg(-\frac{\pi \Ai_{2n+1}'(y)}{2i}\bigg)
\sigma_3=\frac{\pi p_1^2}{4}(\Ai_{2n+1}'(y))^2\sigma_3.
\end{align*}
Note that $F(y) \to 0$ as $y\to \infty$, we have
$$\int_{\Upsilon^*} \hat \mu_1^{\Upsilon^*}(y,z)\hat w_1^{\Upsilon^*}(y,z)dz =F(y)= \int_{+\infty}^y F'(y') dy' = \frac{\pi p_1^2}{4} \left(\int_{+\infty}^y (\Ai_{2n+1}'(y'))^2dy'\right)
\sigma_3.$$
Combining the above formula and \eqref{w2YIV}--\eqref{eq:phi12u*}, we arrive at
\begin{align*}
\Phi_{12}^{\Upsilon^*}(y)
 & =
 - \frac{p_2}{2\pi i} \begin{pmatrix} 0 & \int_{\Upsilon_{2n+2}^{*} \cup \Upsilon_{4n+2}}  z^2 e^{-2i\left(y z + \frac{(2z)^{2n+1}}{4n+2}\right)} dz  \\
 \int_{ \Upsilon_{1}\cup\Upsilon_{2n+1}^{*}} z^2 e^{2i\left(y z + \frac{(2z)^{2n+1}}{4n+2}\right)} dz & 0 \end{pmatrix}
	\\
& \quad  -\frac{1}{2\pi i}\frac{\pi p_1^2}{4} \left(\int_{+\infty}^y (\Ai_{2n+1}'(y'))^2dy'\right) \sigma_3
	\\	
 & =  \frac{p_2}{2\pi i} \frac{\pi}{4} \Ai_{2n+1}''(y) \sigma_1
 +\frac{p_1^2}{8i} \left(\int_{y}^{\infty} (\Ai_{2n+1}'(y'))^2dy'\right) \sigma_3,
\end{align*}
which is \eqref{eq:phi12}.

This completes the proof of Lemma \ref{YlemmaIV}.
\end{proof}

\subsection{The Model RH problem for {\bf Case II} and {\bf Case IV}}\label{IVgeqapp}
For each $z_0 \geq 0$, we define
\begin{equation}
Z_1 = z_0+\Upsilon_1, \quad Z_2 =  -z_0+\Upsilon_{2n+1}^*, \quad Z_3 =  -z_0+\Upsilon_{2n+2}^*,\quad Z_4 =  z_0+\Upsilon_{4n+2},
\end{equation}
where  the contours $\Upsilon_1,\Upsilon_{2n+1}^*,\Upsilon_{2n+2}$ and $\Upsilon_{4n+2}$ are shown in Figure \ref{criticalpointsfig}. The long-time asymptotics of $m$ in {\bf Case II} and {\bf Case IV} is related to the solution $m^Z$ of the following model RH problem parameterized by $y \leq 0$, $t \geq 0$, and $z_0 \geq 0$.
\begin{rhp}\label{modelRHPII}
\hfill
\begin{enumerate}[label=\emph{(\alph*)}, ref=(\alph*)]
\item $m^Z(y, t, z_0, \cdot) \in I + \dot{E}^2(\C \setminus Z)$, where
\begin{equation}\label{def:Z}
Z:=\cup_{j=1}^4 Z_j \cup (-z_0,z_0);
\end{equation}
see Figure \ref{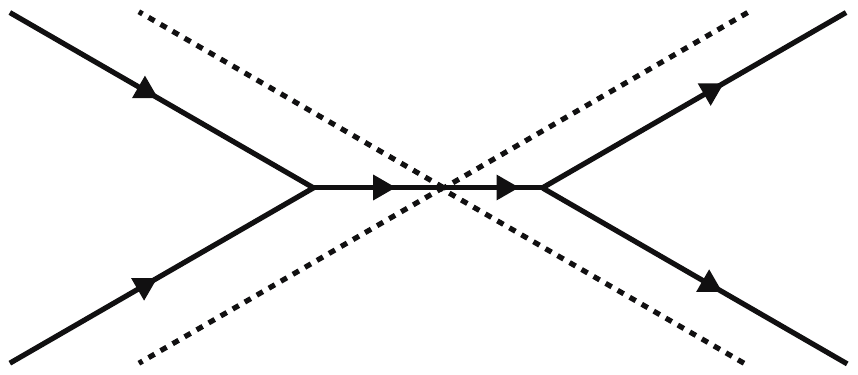} for an illustration and the orientation.
\item For a.e. $\lambda\in Z$, we have
\begin{equation}
m_+^Z(y, t, z_0, z) =  m_-^Z(y, t, z_0, z) v^Z(y, t, z_0, z),
\end{equation}
where
\begin{align}\label{vZdefIVg}
&v^Z(y, t, z_0, z)
\nonumber
\\
&= \begin{cases}
 \begin{pmatrix}
 1	& 0 \\
p_N(t,z) e^{2i\left(y z + \frac{(2z)^{2n+1}}{4n+2}\right)}  & 1
\end{pmatrix}, &  z \in Z_1 \cup Z_2,
	\\
\begin{pmatrix} 1 & -p_N^*(t, z)e^{-2i\left(y z + \frac{(2z)^{2n+1}}{4n+2}\right)}	\\
0	& 1
\end{pmatrix}, &   z \in Z_3 \cup Z_4,
  	\\
\begin{pmatrix} 1 - |p_N(t, z)|^2 & -p_N^*(t, z)e^{-2i\left(y z + \frac{(2z)^{2n+1}}{4n+2}\right)} \\
p_N(t,z) e^{2i\left(y z + \frac{(2z)^{2n+1}}{4n+2}\right)}	& 1 \end{pmatrix}, &  z \in (-z_0,z_0),
\end{cases}
\end{align}
with $p_N(t,z)$ given in \eqref{psumIV}.
\end{enumerate}
\end{rhp}

\begin{figure}
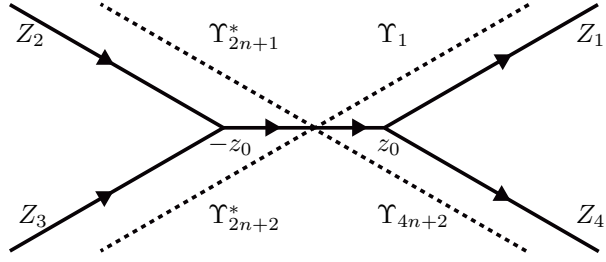

\begin{center}
 \begin{overpic}[width=.55\textwidth]{Z.pdf}
 \put(95,36){\small $Z_1$}
 \put(2,36){\small $Z_2$}
 \put(2.5,6){\small $Z_3$}
 \put(95,6){\small $Z_4$}
 \put(62,36){\small $\Upsilon_1$}
 \put(34,36){\small $\Upsilon_{2n+1}^*$}
  \put(62,6){\small $\Upsilon_{4n+2}$}
 \put(34,6){\small $\Upsilon_{2n+2}^*$}
 \put(62,18){\small $z_0$}
 \put(34,18){\small $-z_0$}
 \end{overpic}
\caption{\label{Z.pdf}
      The contour $Z$ for the RH problem \ref{modelRHPII} for $m^Z$.
}
   \end{center}
\end{figure}

Define the parameter subset $\mathcal{P}_T$ of $\R^3$ by
\begin{align}\label{parametersetIVg}
\mathcal{P}_T = \{(y,t,z_0) \in \R^3 \, | \, -C_1 \leq y \leq 0, \, t \geq T, \, \sqrt[2n]{|y|}/2 \leq z_0 \leq C_2\},
\end{align}
where $C_1,C_2 > 0$ are constants, we have the following lemma about the properties of $m^Z$, which serves as the counterpart of Lemma \ref{YlemmaIV} in {\bf Case I} and {\bf Case III}.

\begin{lemma}\label{ZlemmaIVg}
There exists a suitable $T \geq 1$ such that the RH problem \eqref{modelRHPII} has a unique solution $m^Z(y, t, z_0, z)$ whenever $(y,t,z_0) \in \mathcal{P}_T$. Moreover, the following properties of $m^Z$ hold.
\begin{enumerate}[label=\emph{(\alph*)}, ref=(\alph*)]

\item For each integer $N \geq 1$,
\begin{align*}
&  m^Z(y, t, z_0, z) = I + \sum_{j=1}^N \sum_{l=0}^N \frac{\Phi_{jl}^\Upsilon(y)}{z^j t^{\frac{l}{2n+1}}}  + \Boh \left(\frac{t^{-\frac{N+1}{2n+1}}}{|z|} + \frac{1}{|z|^{N+1}}\right),
\end{align*}
uniformly with respect to $\arg z \in [0, 2\pi]$ and $(y,t,z_0) \in \mathcal{P}_T$ as $z \to \infty$, where $\{\Phi_{jl}^\Upsilon(y)\}$ are smooth functions of $y \in \R$ which coincide with the functions in (\ref{mYasymptoticsIV}) for $y \geq 0$ and satisfy the properties indicated in Item (c) of Lemma \ref{YlemmaIV}.

\item $m^Z$ is uniformly bounded, i.e.,
\begin{align*}
\sup_{(y,t,z_0) \in \mathcal{P}_T} \sup_{z \in \C\setminus Z} |m^Z(y, t, z_0, z)|   < \infty,
\end{align*}
and satisfies the symmetry relation
\begin{subequations}
\begin{align*}
m^Z(y, t, z_0, z) = \sigma_1\overline{m^Z(y, t, z_0, \bar{z})} \sigma_1.
\end{align*}
If $p_N(t,z) = -\overline{p_N(t,-\bar{z})}$, it also follows that
\begin{align*}
m^Z(y, t, z_0, z) = \sigma_1\sigma_3m^Z(y, t, z_0, -z)\sigma_3 \sigma_1.
\end{align*}
\end{subequations}
\end{enumerate}
\end{lemma}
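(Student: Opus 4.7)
The strategy mirrors the proof of Lemma~\ref{YlemmaIV}. Split the jump data as $p_N(t,z) = s + q_N(t,z)$ with $q_N(t,z) := \sum_{j=1}^N p_j z^j/t^{j/(2n+1)}$, let $v^Q$ denote the jump \eqref{vZdefIVg} with $q_N$ replaced by $0$, and let $m^Q = m^Q(y,z_0,z)$ be the unique solution of the resulting $t$-independent RH problem. Since on the rays $Z_j$ the relevant exponentials decay like $e^{-c|z|^{2n+1}}$ and on $(-z_0,z_0) \subset [-C_2,C_2]$ one has $|q_N(t,z)| \leq C t^{-1/(2n+1)}$, the factorization $m^Z = \hat m \, m^Q$ produces a small-norm RH problem for $\hat m$ with jump $\hat v = m^Q_- v^Z (v^Q)^{-1} (m^Q_-)^{-1}$ satisfying $\|\hat v - I\|_{L^p(Z)} = \Boh(t^{-1/(2n+1)})$ uniformly on $\mathcal P_T$. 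Once $m^Q$ is controlled, the Neumann-series inversion of $I - \mathcal C_{\hat v - I}$ and the resulting contour-integral representation yield existence of $m^Z$ for $t \geq T$ together with the asymptotic expansion in item~(a), following the derivation of \eqref{mYrepresentationIV}--\eqref{mYexpansionIV} line by line.

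The preparatory step is therefore to establish existence and uniform boundedness of $m^Q$ on the closure of $\{(y, z_0) : -C_1 \leq y \leq 0,\ z_0 = \sqrt[2n]{|y|}/2\}$. Under the linear rescaling $z = 2z_0\zeta$ sending $\pm z_0$ to $\pm 1/2$, together with a constant conjugation by $\mathrm{diag}(1, \pm i)$ converting the off-diagonal $s = i\rho$ into $\rho = -is$, the RH problem for $m^Q$ becomes precisely that of RH problem~\ref{rhp:Y} for $Y(\zeta)$ from Section~\ref{proof:Theorem1}, with $|y|^{(2n+1)/(2n)}$ playing the role of the large parameter $|x|^{(2n+1)/(2n)}$. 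Consequently the full Deift--Zhou output of Section~\ref{proof:Theorem1}---the small-norm estimate \eqref{eq:Rest} together with the explicit global parametrix \eqref{def:N} and the parabolic-cylinder parametrices \eqref{def:phalf} and \eqref{def:Pl}---yields uniform boundedness of $m^Q$ whenever $|y|$ is bounded away from zero. On the complementary compact region of small $|y|$, the jump $v^Q$ depends smoothly on $(y, z_0)$ and its limit at $y=0$ is precisely the RH problem~\ref{RHmYIV} from Lemma~\ref{YlemmaIV}; standard parameter-dependent RH theory then furnishes a bounded, smoothly $y$-dependent solution, which matches the Deift--Zhou bound on the overlap.

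With $m^Q$ under control, the coefficients $\Phi_{jl}^{\Upsilon}(y)$ for $y \leq 0$ emerge from the resolvent expansion of $\hat m$ exactly as in the proof of Lemma~\ref{YlemmaIV}. Their identification with $\Phi_{jl}^{\Upsilon^*}(y)$ for $y \geq 0$ follows by observing that at $y = 0$ the interval $(-z_0, z_0)$ degenerates to a point and the two RH problems coincide; the smooth dependence on $y$ from both sides via the integral-equation representations then glues the coefficients into a single smooth function on $\R$. Uniform boundedness of $m^Z$ in item~(b) is inherited from that of $m^Q$ and from the small-norm bound on $\hat m$, while the two symmetries follow by applying the usual uniqueness argument to the corresponding symmetries of $v^Z$. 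The main obstacle is the uniform control of $m^Q$ through the coalescence $z_0 \to 0$: after the rescaling to $\zeta$, the two parabolic-cylinder disks $D(\pm 1/2, \delta)$ translate back to $D(\pm z_0, 2 z_0 \delta)$ in the $z$-variable and collide, breaking the Section~\ref{proof:Theorem1} construction. We resolve this not by building a merged parametrix but by partitioning $\mathcal P_T$ into a large-$|y|$ region (where Deift--Zhou applies) and a compact complementary region (where direct parameter-dependent RH theory applies), with the bounds matching on the overlap.
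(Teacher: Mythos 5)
Your overall factorization strategy (peel off a $t$-independent model solution and run a small-norm argument on the quotient) matches the paper's, and in fact your $m^Q$ is exactly the paper's $m^{P1}$, since setting $q_N\equiv 0$ in \eqref{vZdefIVg} reproduces the jump $v^{P1}$ in RH problem \ref{rhp:mp1}. Where you diverge is in how you establish existence and uniform boundedness of this model solution: the paper does not re-solve it at all. It observes that $m^{P1}$ is obtained from the already-controlled $m^P$ of \eqref{def:mp} by an explicit, invertible, lens-type contour deformation from $\Upsilon^*$ to $Z$, exactly parallel to the $X\to Y$ step of Section~\ref{sectrans}; the triangular deformation factors involve $e^{\pm 2i(yz+(2z)^{2n+1}/(4n+2))}$, which stay uniformly bounded on the deformation strips for $-C_1\le y\le 0$ and $0\le z_0\le C_2$, so the uniform bound \eqref{mPbounded} on $m^P$ transfers directly. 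This is a one-step argument and immediately covers the whole parameter set $\mathcal P_T$.

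Your alternative route (rescale to $\zeta=z/(2z_0)$ and invoke the Deift--Zhou output of Section~\ref{proof:Theorem1}, supplemented by a compactness argument near $y=0$) contains a genuine gap. The rescaled phase $2i(yz+(2z)^{2n+1}/(4n+2))$ matches $2i|x|^{(2n+1)/(2n)}\widetilde\Xi(\zeta)$ \emph{only when} $y=-(2z_0)^{2n}$, i.e.\ $z_0=\sqrt[2n]{|y|}/2$. You restrict your preparatory step to this slice, but $\mathcal P_T$ allows $z_0$ to range over $[\sqrt[2n]{|y|}/2,\,C_2]$, and for $z_0>\sqrt[2n]{|y|}/2$ the stationary points of the rescaled phase fall strictly inside $(-1/2,1/2)$ rather than at the endpoints, so none of RH problems~\ref{rhp:Y}--\ref{rhp:R} applies and the entire Section~\ref{proof:Theorem1} machinery (global parametrix, parabolic-cylinder parametrices at $\pm1/2$) would have to be rebuilt. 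In addition, the ``standard parameter-dependent RH theory'' you invoke for the compact small-$|y|$ region needs to be backed by a solvability argument (a vanishing-lemma step using $|\rho|<1$ and the symmetries of $v^{P1}$) before a continuity-and-compactness bound can be asserted; as written this is a hand-wave. Both issues evaporate once you notice that $m^Q=m^{P1}$ is an elementary deformation of the already-bounded $m^P$, which is the shortcut the paper takes.
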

\begin{proof}
Proceeding as in Section \ref{sectrans}, we introduce a matrix-valued function $m^{P1}(y,z)$ by
\begin{equation*}
m^{P1}(y,z)=\left\{
           \begin{array}{ll}
             m^{P}(z)\begin{pmatrix} 1 & 0 \\ i \rho e^{2i\left(y z + \frac{(2z)^{2n+1}}{4n+2}\right)} & 1 \end{pmatrix}, & \text{$z$ between $Z_1$ and $\Upsilon_1$}\\ & \text{and $z$ between $\Upsilon_{2n+1}^*$ and $\widetilde Z_{2}$, } \\
             m^{P}(z)\begin{pmatrix} 1 & -i \rho e^{-2i\left(y z + \frac{(2z)^{2n+1}}{4n+2}\right)} \\ 0 & 1 \end{pmatrix}, & \text{$z$ between $Z_3$ and $\Upsilon_{2n+2}^*$}
 \\
& \text{and $z$ between $Z_4$ and $\Upsilon_{4n+2}$,}  \\
             m^{P}(z), & \hbox{elsewhere,}
           \end{array}
         \right.
\end{equation*}
where $m^P$ is defined in \eqref{def:mp}. In view of the RH problem \ref{rhp:mp} for $m^{P}$, it can be easily checked that $m^{P1}$ satisfies the following RH problem.
\begin{rhp}\label{rhp:mp1}
\hfill
\begin{enumerate}[label=\emph{(\alph*)}, ref=(\alph*)]
\item $m^{P1}(y, t, z_0, \cdot) \in I + \dot{E}^2(\C \setminus Z)$, where the contour $Z$ is defined in \eqref{def:Z}.
\item For a.e. $\lambda\in Z$, we have
\begin{equation*}
m_+^{P1}(y, t, z_0, z) =  m_-^{P1}(y, t, z_0, z) v^{P1}(y, t, z_0, z),
\end{equation*}
where
\begin{align*}
&v^{P1}(y, t, z_0, z)
\nonumber
\\
&= \begin{cases}
 \begin{pmatrix}
 1	& 0 \\
i\rho e^{2i\left(y z + \frac{(2z)^{2n+1}}{4n+2}\right)}  & 1
\end{pmatrix}, &  z \in Z_1 \cup Z_2,
	\\
\begin{pmatrix} 1 & i\rho e^{-2i\left(y z + \frac{(2z)^{2n+1}}{4n+2}\right)}	\\
0	& 1
\end{pmatrix}, &   z \in Z_3 \cup Z_4,
  	\\
\begin{pmatrix} 1 - |\rho|^2 & i\rho e^{-2i\left(y z + \frac{(2z)^{2n+1}}{4n+2}\right)} \\
i\rho e^{2i\left(y z + \frac{(2z)^{2n+1}}{4n+2}\right)}	& 1 \end{pmatrix}, &  z \in (-z_0,z_0).
\end{cases}
\end{align*}
\end{enumerate}
\end{rhp}
By defining
\begin{align*}
\hat{m}^{Z} := m^{Z} (m^{P1})^{-1},
\end{align*}
one can show that, analogous to the arguments used in the proof of Lemma \ref{YlemmaIV}, that $\hat{m}^{Z}$ admits an asymptotic expansion like \eqref{mhatYasymptoticsIV}, which finally leads to the statements of Lemma \ref{ZlemmaIVg}. We omit the details here.
\end{proof}

\noindent
{\bf Acknowledgements}  {\it Lin Huang was partially supported by National Natural Science Foundation of China under grant number 11901141. Lun Zhang was partially supported by National Natural Science Foundation of China under grant number 11822104, by The Program for Professor of Special Appointment (Eastern Scholar) at Shanghai Institutions of Higher Learning, and by Grant EZH1411513 from Fudan University.
}


\end{document}